\newif\ifistoreview
\newcommand{\addColor}{\textcolor{black}}
\newcommand{\add}[1]{\ifistoreview\addColor{#1}\else #1\fi}
\def\R{{\rm I\!R}}
\def\argmin{\mathop{\rm arg\,min}}
\title{Analysis and algorithms for some compressed sensing models based on L1/L2 minimization
\thanks{ \funding{The research of the first author is supported in part by the AMSS-PolyU Joint Laboratory Postdoctoral Scheme. The research of the third author is supported in part by Hong Kong Research Grants Council PolyU153003/19p.} } }
\author{Liaoyuan Zeng\thanks{Department of Applied Mathematics, The Hong Kong Polytechnic University, Hong Kong, PRC
  (\email{lyzeng@polyu.edu.hk}).}
\and
Peiran Yu\thanks{Department of Applied Mathematics, The Hong Kong Polytechnic University, Hong Kong, PRC
  (\email{peiran.yu@connect.polyu.hk}).}
\and Ting Kei Pong\thanks{Department of Applied Mathematics, The Hong Kong Polytechnic University, Hong Kong, PRC
  (\email{tk.pong@polyu.edu.hk}).}
}
\begin{document}

\maketitle

\begin{abstract}
  Recently, in a series of papers \cite{YinEssXin14,RaWaDoLo18,WangYanLou19,WangTaoLou20}, the ratio of $\ell_1$ and $\ell_2$ norms was proposed as a sparsity inducing function for noiseless compressed sensing. In this paper, we further study properties of such model in the noiseless setting, and propose an algorithm for minimizing $\ell_1$/$\ell_2$ subject to noise in the measurements. Specifically, we show that the extended objective function (the sum of the objective and the indicator function of the constraint set) of the model in \cite{RaWaDoLo18} satisfies the Kurdyka-{\L}ojasiewicz (KL) property with exponent 1/2; this allows us to establish linear convergence of the algorithm proposed in \cite[Eq.~11]{WangYanLou19} under mild assumptions. We next extend the $\ell_1$/$\ell_2$ model to handle compressed sensing problems with noise. We establish the solution existence for some of these models under the spherical section property \cite{Vavasis09,Zhang13}, and extend the algorithm in \cite[Eq.~11]{WangYanLou19} by incorporating moving-balls-approximation techniques \cite{AusSheTeb10} for solving these problems. We prove the subsequential convergence of our algorithm under mild conditions, and establish global convergence of the whole sequence generated by our algorithm by imposing additional KL and differentiability assumptions on a specially constructed potential function. Finally, we perform numerical experiments on robust compressed sensing and basis pursuit denoising with residual error measured by $ \ell_2 $ norm or Lorentzian norm via solving the corresponding $\ell_1$/$\ell_2$ models by our algorithm. Our numerical simulations show that our algorithm is able to recover the original sparse vectors with reasonable accuracy.
\end{abstract}

\begin{keywords}
  L1/L2 minimization, Kurdyka-{\L}ojasiewicz exponent, moving balls approximation, linear convergence
\end{keywords}

\begin{AMS}
  90C26, 90C32, 90C55, 90C90
\end{AMS}

\section{Introduction}
In compressed sensing (CS), a high-dimensional sparse or approximately sparse signal $ x_0\in \R^n$ is compressed (linearly) as $Ax_0$ for transmission, where $A\in \R^{m\times n}$ is the sensing matrix. The CS problem seeks to recover the original signal $ x_0 $ from the possibly noisy low-dimensional measurement $ b \in \R^m$. This problem is NP-hard in general; see \cite{Natarajan95}.

When there is no noise in the transmission, i.e., $Ax_0 = b$, one can recover $x_0$ exactly by minimizing the $\ell_1$ norm over $A^{-1}\{b\}$ if $x_0$ is sufficiently sparse and the matrix $A$ satisfies certain assumptions \cite{CaTao05, ChenDonoho01}. To empirically enhance the recovery ability, various nonconvex models like $ \ell_p $ $(0<p<1)$ minimization model \cite{Chartrand07} and $ \ell_{1-2} $ minimization model \cite{LouYin15} have been proposed, in which the $\ell_p$ quasi-norm and the difference of $\ell_1$ and $\ell_2$ norms are minimized over $A^{-1}\{b\}$, respectively. Recently, a new nonconvex model based on minimizing the quotient of the $\ell_1$ and $\ell_2$ norms was introduced in \cite{RaWaDoLo18,YinEssXin14} and further studied in \cite{WangYanLou19,WangTaoLou20}:
\begin{equation}\label{Pold}
\nu_{cs}^*:=\displaystyle\min_{x\in \R^n}  \displaystyle\frac{\|x\|_1}{\|x\|} \quad\quad {\rm s.t.}  \quad Ax = b,
\end{equation}
where $A\in \R^{m\times n}$ has full row rank and $b\in \R^m\backslash\{0\}$. As discussed in \cite{RaWaDoLo18}, the above $ \ell_1/\ell_2 $ model has the advantage of being \emph{scale-invariant} when reconstructing signals and images with high dynamic range.  An efficient algorithm was proposed for solving \eqref{Pold} in \cite[Eq.~11]{WangYanLou19} and subsequential convergence was established under mild assumptions. 


In practice, however, there is noise in the measurement, i.e., $b = Ax_0 + \epsilon$ for some noise vector $\epsilon$, and \eqref{Pold} is not applicable for (approximately) recovering $x_0$. To deal with noisy situations, it is customary to relax the equality constraint in \eqref{Pold} to an inequality constraint \cite{CaTao06}. In this paper, we consider the following model that minimizes the $ \ell_1/\ell_2 $ objective over an inequality constraint:
\begin{equation}\label{P0}
\displaystyle  \nu_{ncs}^* =\min_{x\in \R^n}  \displaystyle\frac{\|x\|_1}{\|x\|} \quad \quad {\rm s.t.}\   q(x)\le 0,
\end{equation}
where $q(x) = P_1(x) - P_2(x)$ with $ P_1:\R^n\to \R $ being \add{continuously differentiable with Lipschitz continuous gradient} and $ P_2:\R^n\to \R $ being convex continuous, and we assume that $\{x:\; q(x)\le 0\}\neq\emptyset$ and $q(0) > 0$. Our assumptions on $q$ are general enough to cover commonly used loss functions for modeling noise in various scenarios:
\begin{enumerate}
	\item {\bf Gaussian noise}: When the noise in the measurement follows the Gaussian distribution, the least squares loss function $ y\mapsto \|y-b\|^2 $ is typically employed \cite{ChenDonoho01,CaTao06}. One may consider the following $\ell_1/\ell_2$ minimization problem:
	\begin{equation}\label{prob:LS}
	\displaystyle \min_{x\in \R^n}  \displaystyle\frac{\|x\|_1}{\|x\|} \quad\quad
	{\rm s.t.} \ \|Ax - b\|^2 - \sigma^2 \le 0,
	\end{equation}
	where $\sigma > 0$, $A\in \R^{m\times n}$ has full row rank and $b\in \R^m$ satisfies $ \|b\|>\sigma $. Problem \eqref{prob:LS} corresponds to \eqref{P0} with $q(x) = P_1(x) = \|Ax - b\|^2 - \sigma^2$ and $P_2 = 0$.
	\item {\bf Cauchy noise}: When the noise in the measurement follows the Cauchy distribution (a heavy-tailed distribution), the Lorentzian norm\footnote{\add{We refer the readers to \cite[Equation~(12)]{CaRaArBaSa16} for the definition and notation of Lorentzian norm.}} $ \|y\|_{LL_2,\gamma} := \sum_{i=1}^m\log\left(1 + \gamma^{-2}y_i^2\right) $ is used as the loss function \cite{CaBa10,CaRaArBaSa16}, where $\gamma > 0$. Note that the Lorentzian norm is \add{continuously differentiable with Lipschitz continuous gradient}. One may then consider the following $\ell_1/\ell_2$ minimization problem:
	\begin{equation}\label{prob:Lorentzian}
	\displaystyle\min_{x\in \R^n}  \displaystyle\frac{\|x\|_1}{\|x\|} \quad\quad
	{\rm s.t.} \ \|Ax - b\|_{LL_2,\gamma} - \sigma\le 0,
	\end{equation}
	where $\sigma > 0$, $A\in \R^{m\times n}$ has full row rank, and $ b\in\R^m $ with $ \|b\|_{LL_2,\gamma}>\sigma $. Problem \eqref{prob:Lorentzian} corresponds to \eqref{P0} with $q(x) = P_1(x)= \|Ax - b\|_{LL_2,\gamma} - \sigma$ and $P_2 = 0$.
	\item {\bf  Robust compressed sensing}: In this scenario, the measurement is corrupted by both Gaussian noise and electromyographic noise \cite{PoCa12,CaRaArBaSa16}: the latter is sparse and may have large magnitude (outliers). Following \cite[Section~5.1.1]{LiuPong19}, one may make use of the loss function $y\mapsto {\rm dist}^2(y,S)$, where $ S:=\{z\in\R^m:\; \|z\|_0\leq r\} $, $\|z\|_0$ is the number of nonzero entries in $z$ and $r$ is an estimate of the number of outliers. One may then consider the following $\ell_1/\ell_2$ minimization problem:
	\begin{equation}\label{prob:rcs}
	\displaystyle\min_{x\in \R^n}  \displaystyle\frac{\|x\|_1}{\|x\|}\quad\quad
	{\rm s.t.} \ {\rm dist}^2(Ax-b, S)-\sigma^2  \le 0,
	\end{equation}
	where $ \sigma>0 $, $ S=\{z\in\R^m:\; \|z\|_0\leq r\} $ with $r \ge 0$,  $A\in\R^{m\times n} $ has full row rank and $ b\in\R^m $ satisfies $ {\rm dist}(b, S)>\sigma $. Notice that
	\begin{equation}\label{q_rcs}
	{\rm dist}^2(Ax-b, S)-\sigma^2 = \underbrace{\|Ax-b\|^2-\sigma^2}_{P_1(x)} - \underbrace{\max_{z\in S} \{\langle 2z ,Ax-b\rangle - \|z\|^2 \} }_{P_2(x)},
	\end{equation}
	with $ P_1 $ being \add{continuously differentiable with Lipschitz continuous gradient} and $ P_2 $ being convex continuous. So this problem corresponds to \eqref{P0} with $ P_1 $ and $ P_2 $ as in \eqref{q_rcs} and $ q = P_1-P_2 $.
\end{enumerate}
In the literature, algorithms for solving \eqref{prob:LS} with $ \ell_1 $ norm or $ \ell_p $ quasi-norm in place of the quotient of the $\ell_1$ and $\ell_2$ norms have been discussed in \cite{VaFr08,ChenPong16,SheTeb16}, and \cite{YuPongLv20} discussed an algorithm for solving \eqref{prob:Lorentzian} with $ \ell_1 $ norm in place of the quotient of the $\ell_1$ and $\ell_2$ norms. These existing algorithms, however, are not directly applicable for solving \eqref{P0} due to the fractional objective and the possibly nonsmooth continuous function $q$ in the constraint.

In this paper, we further study properties of the $\ell_1/\ell_2$ models \eqref{Pold} and \eqref{P0}, and propose an algorithm for solving \eqref{P0}. In particular, we first argue that an optimal solution of \eqref{Pold} exists by making connections with the $s$-spherical section property \cite{Vavasis09,Zhang13} of $\ker A$: a property which is known to hold with high probability when $n\gg m$ for Gaussian matrices. We then revisit the algorithm proposed in \cite[Eq.~11]{WangYanLou19} (see Algorithm~\ref{alg:lou} below) for solving \eqref{Pold}. Specifically, we consider the following function
\begin{equation}\label{Fnoiseless}
F(x) := \frac{\|x\|_1}{\|x\|} + \delta_{A^{-1}\{b\}}(x),
\end{equation}
where $A\in \R^{m\times n}$ has full row rank and $b\in \R^m\backslash\{0\}$. We show in section~\ref{sec42} that $F$ is a Kurdyka-{\L}ojasiewicz (KL) function with exponent $\frac12$. This together with standard convergence analysis based on KL property \cite{Attouch09,Attouch10,Attouch13} allows us to deduce local linear convergence of the sequence $\{x^t\}$ generated by Algorithm~\ref{alg:lou} when $\{x^t\}$ is bounded. The KL exponent of $F$ is obtained based on a new calculus rule that deduces the KL exponent of a fractional objective from the difference between the numerator and (a suitable scaling of) the denominator.

Next, for the model \eqref{P0}, we also relate existence of solutions to the $s$-spherical section property of $\ker A$ when $q$ takes the form in \eqref{prob:LS} and \eqref{prob:Lorentzian}. We then propose an algorithm, which we call MBA$_{\ell_1/\ell_2}$ (see Algorithm~\ref{alg1}), for solving \eqref{P0}, which can be seen as an extension of Algorithm~\ref{alg:lou} by incorporating {\em moving-balls-approximation} (MBA) techniques. The MBA algorithm was first proposed in \cite{AusSheTeb10} for minimizing a smooth objective function subject to multiple smooth constraints, and was further studied in \cite{BolChenPau18,BolPau16,YuPongLv20} for more general objective functions. However, the existing convergence results of these algorithms cannot be applied to MBA$_{\ell_1/\ell_2}$ because of the possibly nonsmooth continuous function $q$ and the fractional objective in \eqref{P0}. Our convergence analysis of MBA$_{\ell_1/\ell_2}$ relies on a specially constructed potential function, which involves the indicator function of the lower level set of a {\em proper closed} function related to $q$ (see \eqref{barF}). We prove that any accumulation point of the sequence generated by MBA$_{\ell_1/\ell_2}$ is a so-called Clarke critical point, under mild assumptions; Clarke criticality reduces to the usual notion of stationarity when $q$ is regular. Moreover, by imposing  additional KL assumptions on this potential function and assuming $P_1$ is twice continuously differentiable, we show that the sequence generated by MBA$_{\ell_1/\ell_2}$ is globally convergent, and the convergence rate is related to the KL exponent of the potential function.
Finally, we perform numerical experiments to illustrate the performance of our algorithm on solving \eqref{prob:LS}, \eqref{prob:Lorentzian} and \eqref{prob:rcs}.

The rest of this paper is organized as follows. We present notation and some preliminaries in section~\ref{sec2}. Existence of solutions for \eqref{Pold} is discussed in section~\ref{sec3}. In section~\ref{sec4}, we derive the KL exponent of $F$ in \eqref{Fnoiseless} and establish local linear convergence of the algorithm proposed in \cite[Eq.~11]{WangYanLou19}. Properties of \eqref{P0} such as solution existence and optimality conditions are discussed in section~\ref{sec:noisyCS}, and our algorithm, MBA$_{\ell_1/\ell_2}$, for solving \eqref{P0} is proposed and analyzed in section~\ref{sec6}. Finally, numerical results are presented in section~\ref{sec7}.

\section{Notation and preliminaries}\label{sec2}
In this paper, we use $ \R^n $ to denote the Euclidean space of dimension $ n $ and use $ \mathbb{N}_+ $ to denote the set of nonnegative integers. For two vectors $ x$ and $y\in\R^n $, we use $\langle x,y\rangle$ to denote their inner product, i.e., $\langle x,y\rangle = \sum_{i=1}^n x_i y_i$. The Euclidean norm, the $\ell_1$ norm and the $\ell_0$ norm (i.e., the number of nonzero entries) of $x$ are denoted respectively by $ \|x\| $, $ \|x\|_1 $ and $\|x\|_0$. We also use $ B(x, r) $ to denote a closed ball centered at $ x $ with radius $r\ge 0$, i.e., $ B(x,r) = \{y:\; \|y-x\|\leq r\} $.

An extended-real-valued function $ h:\R^n\rightarrow (-\infty, \infty] $ is said to be proper if its domain $ {\rm dom}\, h:=\{x:h(x)< \infty\} $ is nonempty. A proper function $ h $ is said to be closed if it is lower semi-continuous. For a proper closed function $ h $, the regular subdifferential $ \widehat\partial h(\bar x) $ at $\bar x\in {\rm dom}\, h $ and the limiting subdifferential $ \partial h(\bar x) $ are given respectively as
\begin{equation*}
	\begin{split}
	& \widehat{\partial}h(\bar x):= \left\{\upsilon:\; \liminf_{x\rightarrow \bar x, x\neq \bar x} \frac{h(x)-h(\bar x)-\langle\upsilon, x-\bar x\rangle}{\|x-\bar x\|}\geq 0\right\}, \\
	& \partial h(\bar x):= \left\{\upsilon:\; \exists x^t\overset{h}\rightarrow \bar x \text{ and } \upsilon^t \in \widehat{\partial} h(x^t) \text{ with } \upsilon^t \rightarrow \upsilon\right\},
	\end{split}
\end{equation*}
where $ x^t\overset{h}\rightarrow \bar x $ means $ x^t\rightarrow \bar x $ and $ h(x^t)\rightarrow h(\bar x) $. In addition, we set $\partial h(x) = \widehat\partial h(x) = \emptyset$ by convention when $x\notin {\rm dom}\,h$, and we define ${\rm dom}\,\partial h:= \{x:\; \partial h(x)\neq\emptyset\}$. It is known that $\partial h(x) = \{\nabla h(x)\}$ if $h$ is continuously differentiable at $x$ \cite[Exercise~8.8(b)]{RockWets98}. For a proper closed convex function, the limiting subdifferential reduces to the classical subdifferential for convex functions \cite[Proposition~8.12]{RockWets98}.
The convex conjugate of a proper closed convex function $h$ is defined as
\[
h^*(y)=\sup_{x\in\R^n} \{ \langle x,y\rangle- h(x) \}.
\]
We recall the following relationship concerning convex conjugate and subdifferential of a proper closed convex function $h$; see \cite[Proposition~11.3]{RockWets98}:
\begin{equation}\label{Young}
  y \in \partial h(x)\ \Leftrightarrow\ x\in \partial h^*(y) \ \Leftrightarrow\  h(x) + h^*(y) \le \langle x,y\rangle  \ \Leftrightarrow\  h(x) + h^*(y) = \langle x,y\rangle.
\end{equation}
For a locally Lipschitz function $h$, its Clarke subdifferential at $ \bar x\in\R^n $ is defined by
\[
\partial^\circ h(\bar x):= \left\{\upsilon :\; \limsup_{x\rightarrow \bar x, t\downarrow 0} \frac{f(x+tw)-f(x)}{t}\geq \langle \upsilon, w\rangle \text{ for all } w\in\R^n\right\};
\]
it holds that $\partial h(\bar x)\subseteq \partial^\circ h(\bar x)$; see \cite[Theorem~5.2.22]{BorZhu04}. Finally, for a proper closed function $f$, we say that $\bar x$ is a stationary point of $f$ if $0\in \partial f(\bar x)$. All local minimizers are stationary points according to \cite[Theorem~10.1]{RockWets98}.

For a nonempty closed set $C$, we define the indicator function as
\[
\delta_C(x):=\begin{cases}
0 & x\in C,\\
\infty & x\notin C.
\end{cases}
\]
The normal cone (resp., regular normal cone) of $ C $ at an $ x\in C $ is given by $ N_C(x):=\partial \delta_C(x) $ (resp., $\widehat{N}_C(x) := \widehat{\partial}\delta_C(x)$). The distance from a point $ x $ to $ C $ is denoted by $ {\rm dist}(x, C) $. The set of points in $C$ that are closest to $x$ is denoted by ${\rm Proj}_C(x)$. The convex hull of $C$ is denoted by ${\rm conv}\,C$.

We next recall the Kurdyka-{\L}ojasiewicz (KL) property. This property and the associated notion of KL exponent has been used extensively in the convergence analysis of various first-order methods; see \cite{Attouch09, Attouch10, Attouch13, BolSabTeb14, LiPong16}.
\begin{definition}[Kurdyka-{\L}ojasiewicz property and exponent]
	We say that a proper closed function $ h:\R^n\rightarrow (-\infty, \infty] $ satisfies the Kurdyka-{\L}ojasiewicz (KL) property at an $ \widehat x \in {\rm dom}\,\partial h $ if there are $ a\in(0,\infty] $, a neighborhood $ U $ of $ \widehat x $ and a continuous concave function $ \varphi:[0,a)\rightarrow [0,\infty) $ with $ \varphi(0)=0 $ such that
\begin{enumerate}[{\rm (i)}]
	\item $ \varphi $ is continuously differentiable on $ (0,a) $ with  $ \varphi^\prime>0 $ on $ (0, a) $;
	\item for every $ x\in U $ with $ h(\widehat x)< h(x) < h(\widehat x)+a $, it holds that
	\begin{equation}\label{KL}
		\varphi^\prime(h(x)-h(\widehat x)){\rm dist}(0, \partial h(x)) \geq 1.
	\end{equation}
\end{enumerate}
If $ h $ satisfies the KL property at $ \widehat x\in{\rm dom}\,\partial h $ and the $ \varphi $ in \eqref{KL} can be chosen as $ \varphi(\nu) = a_0\nu^{1-\theta} $ for some $a_0 > 0$ and $ \theta\in[0,1) $, then we say that $ h $ satisfies the KL property at $ \widehat x $ with exponent $ \theta $.

A proper closed function $ h $ satisfying the KL property at every point in $ {\rm dom}\,\partial h $ is called a KL function, and a proper closed function $ h $ satisfying the KL property with exponent $ \theta\in [0, 1) $ at every point in $  {\rm dom}\,\partial h $ is called a KL function with exponent $ \theta $.
\end{definition}

KL functions arise naturally in various applications. \add{For instance, proper closed semi-algebraic functions are KL functions with some exponent $ \theta\in[0,1) $;} see \cite{Attouch10} and \cite[Theorem~3.1]{BolDan07}, respectively.

Another notion that will be needed in our discussion later is (subdifferential) regularity; see \cite[Definition~6.4]{RockWets98} and \cite[Definition~7.25]{RockWets98}.
\begin{definition}
	A nonempty closed set $C$ is regular at $x\in C$ if $N_C(x) = \widehat N_C(x)$, and a proper closed function $h$ is (subdifferentially) regular at $x\in {\rm dom}\,h$ if its epigraph ${\rm epi}\,h:=\{(x,t)\in \R^n\times \R:\; h(x)\le t\}$ is regular at $(x,h(x))$.
\end{definition}

According to \cite[Example~7.28]{RockWets98}, continuously differentiable functions are regular everywhere. Thus, the constraint functions in \eqref{prob:LS} and \eqref{prob:Lorentzian} are regular everywhere. In addition, a nonsmooth regular function particularly relevant to our discussion is the objective function of \eqref{Pold}. Indeed, in view of \cite[Corollary~1.111(i)]{Morduk06}, it holds that:
\begin{equation}\label{subdatbarx}
  \text{At any $\bar x \neq 0$, $\frac{\|\cdot\|_1}{\|\cdot\|} $ is regular and $\partial \frac{\|\bar x\|_1}{\|\bar x\|} = \frac{1}{\|\bar x\|}\partial\| \bar x \|_1- \frac{\|\bar x\|_1}{\|\bar x\|^3}\bar x$}.
\end{equation}
We will also need the following auxiliary lemma concerning the subdifferential of a particular class of functions in our analysis in section \ref{sec6}.
\begin{lemma}\label{lem:regularity2barq}
	Let $ q = P_1 - P_2 $ with $ P_1: \R^n\rightarrow \R $ being continuously differentiable and
	$ P_2:\R^n\rightarrow \R  $ being convex continuous. Then for any $x\in \R^n$, we have
\begin{equation}\label{clarkesubdiffq}
\partial^\circ q(x) = \nabla P_1(x)-\partial P_2(x).
\end{equation}
\end{lemma}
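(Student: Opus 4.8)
The plan is to compute the Clarke subdifferential of $q = P_1 - P_2$ directly by combining the elementary sum rule for Clarke subdifferentials with the two key facts that $P_1$ is smooth and $P_2$ is convex. The reason $\partial^\circ q$ splits so cleanly is that Clarke subdifferential calculus gives an inclusion $\partial^\circ(P_1 + (-P_2)) \subseteq \partial^\circ P_1 + \partial^\circ(-P_2)$ in general, but this inclusion becomes an equality whenever one of the summands is \emph{strictly differentiable} (equivalently here, continuously differentiable). This is precisely the situation for $P_1$.

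First I would invoke the regularity/smoothness of $P_1$: since $P_1$ is continuously differentiable, it is strictly differentiable everywhere, so $\partial^\circ P_1(x) = \{\nabla P_1(x)\}$. For such a summand, the Clarke sum rule holds with equality (see Clarke's standard result, e.g. the calculus in \cite{BorZhu04}), yielding
\begin{equation}\label{eq:cl-sum}
\partial^\circ q(x) = \partial^\circ\bigl(P_1 + (-P_2)\bigr)(x) = \nabla P_1(x) + \partial^\circ(-P_2)(x).
\end{equation}
Second, I would handle the term $\partial^\circ(-P_2)(x)$. Here I use the scalar-multiple rule for the Clarke subdifferential, $\partial^\circ(\lambda h)(x) = \lambda\,\partial^\circ h(x)$ for any scalar $\lambda$ (including negative $\lambda$, a feature special to the Clarke construction), which gives $\partial^\circ(-P_2)(x) = -\partial^\circ P_2(x)$. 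Finally, since $P_2$ is convex and continuous, it is (subdifferentially) regular and locally Lipschitz, so its Clarke subdifferential coincides with the classical convex subdifferential, $\partial^\circ P_2(x) = \partial P_2(x)$. Substituting both facts into \eqref{eq:cl-sum} yields the claimed identity \eqref{clarkesubdiffq}.

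The only point requiring genuine care is the justification that \eqref{eq:cl-sum} is an \emph{equality} and not merely an inclusion, because the generic Clarke sum rule is only a one-sided inclusion $\partial^\circ(f+g) \subseteq \partial^\circ f + \partial^\circ g$. I expect this to be the main (and essentially the only) obstacle, and it is resolved by the standard refinement that the sum rule holds with equality when all but one of the summands is strictly (continuously) differentiable. Thus I would cite the appropriate calculus rules for the Clarke subdifferential — the sum rule under strict differentiability and the positively/negatively homogeneous scalar rule — from \cite{BorZhu04}, apply them to $P_1$ (smooth) and $-P_2$ (negative of a convex function), and conclude. Everything else is a routine substitution, with local Lipschitzness of $q$ (needed for the Clarke subdifferential to be well defined) following immediately from the smoothness of $P_1$ and the local Lipschitzness of the finite convex function $P_2$.
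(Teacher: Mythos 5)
Your proof is correct and follows essentially the same route as the paper's: the paper likewise chains the Clarke sum rule with equality (Corollary~1 to Proposition~2.3.3 in \cite{Clarke90}, valid since the smooth $P_1$ is strictly differentiable), the scalar rule $\partial^\circ(-P_2) = -\partial^\circ P_2$ (Proposition~2.3.1), and the coincidence of the Clarke and convex subdifferentials for the convex $P_2$ (Proposition~2.2.7). Your added remark on the local Lipschitzness of $q$ being what makes $\partial^\circ q$ well defined is a sound, if implicit in the paper, observation.
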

\begin{proof}
Note that for any $x\in \R^n$, we have
\[
\partial^\circ q(x) \overset{\rm (a)}= \nabla P_1(x) + \partial^\circ (- P_2)(x) \overset{\rm (b)}= \nabla P_1(x) - \partial^\circ P_2(x) \overset{\rm (c)}= \nabla P_1(x) - \partial P_2(x),
\]
where (a) follows from Corollary 1 of \cite[Proposition~2.3.3]{Clarke90}, (b) holds because of \cite[Proposition~2.3.1]{Clarke90} and (c) follows from \cite[Proposition~2.2.7]{Clarke90}. 
\end{proof}

\section{Solution existence of model \eqref{Pold}}\label{sec3}

In this section, we establish the existence of optimal solutions to problem \eqref{Pold} under suitable assumptions. A similar discussion was made in \cite[Theorem~2.2]{RaWaDoLo18}, where the existence of \emph{local minimizers} was established under the strong null space property (see \cite[Definition~2.1]{RaWaDoLo18}) of the sensing matrix $ A $. It was indeed shown that any sufficiently sparse solution of $ Ax = b $ is a local minimizer for problem~\eqref{Pold}, under the strong null space property. Here, our discussion focuses on the existence of {\em globally optimal solutions}, and our analysis is based on the spherical section property (SSP) \cite{Vavasis09,Zhang13}.
\begin{definition}[Spherical section property \cite{Vavasis09,Zhang13}]
	Let $ m $, $ n $ be two positive integers such that $ m<n $. Let $ V $ be an $ (n-m) $-dimensional subspace of $ \R^n $ \add{and $ s $ be a positive integer}. We say that $ V $ has the $ s $-spherical section property if
	\[
	\inf_{v\in V\backslash\{0\}}\frac{\|v\|_1}{\|v\|}\geq \sqrt{\frac{m}{s}}.
	\]
\end{definition}
\begin{remark}
	According to \cite[Theorem~3.1]{Zhang13}, if $A\in \R^{m\times n}$ ($m < n$) is a random matrix with i.i.d. standard Gaussian entries, then its $ (n-m) $-dimensional nullspace has the $ s $-spherical section property for $ s=c_1({\rm log}(n/m)+1) $ with probability at least $ 1-e^{-c_0(n-m)} $, where $c_0 $ and $c_1$ are positive constants independent of $m$ and $n$.
\end{remark}

We now present our analysis.
We first characterize the existence of unbounded minimizing sequences of \eqref{Pold}: recall that $ \{x^t\} $ is called a minimizing sequence of \eqref{Pold} if $ Ax^t=b $ for all $ t $ and
$\lim_{t\rightarrow \infty} \frac{\|x^t\|_1}{\|x^t\|} = \nu^*_{cs}$.
Our characterization is related to the following auxiliary problem, where $A$ is as in \eqref{Pold}:
\begin{equation}\label{limit_prob}
	\nu^*_d:= \inf\left\{\frac{\|d\|_1}{\|d\|}:\; Ad = 0, d\neq 0 \right\}.
\end{equation}
\begin{lemma}\label{lem:minseq}
	Consider \eqref{Pold} and \eqref{limit_prob}. Then $ \nu_{cs}^* =\nu_d^*$ if and only if there exists a minimizing sequence of \eqref{Pold} that is unbounded.
\end{lemma}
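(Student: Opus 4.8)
The plan is to exploit the scale invariance of the objective $\|\cdot\|_1/\|\cdot\|$, which is positively homogeneous of degree zero, together with the fact that $\nu_d^*$ in \eqref{limit_prob} is actually attained. Since $\|d\|_1/\|d\|$ is constant along rays, the infimum in \eqref{limit_prob} equals $\min\{\|d\|_1:\; Ad=0,\ \|d\|=1\}$, and this minimum is attained because $\ker A\cap\{d:\|d\|=1\}$ is compact and $\|\cdot\|_1$ is continuous; I call a minimizer $d^*$.

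The central step I would carry out first is to establish the \emph{unconditional} inequality $\nu_{cs}^*\le \nu_d^*$. Fix any $x_0$ with $Ax_0=b$ (such $x_0$ exists because $A$ has full row rank), and let $d$ be any feasible point of \eqref{limit_prob}, i.e., $Ad=0$ and $d\neq0$. For $s>0$ the point $x_0+sd$ satisfies $A(x_0+sd)=b$, hence is feasible for \eqref{Pold}, and using scale invariance,
\[
\frac{\|x_0+sd\|_1}{\|x_0+sd\|}=\frac{\|s^{-1}x_0+d\|_1}{\|s^{-1}x_0+d\|}\ \longrightarrow\ \frac{\|d\|_1}{\|d\|}\quad\text{as }s\to\infty .
\]
Therefore $\nu_{cs}^*\le \|d\|_1/\|d\|$, and taking the infimum over all feasible $d$ gives $\nu_{cs}^*\le\nu_d^*$. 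Consequently the claimed equivalence reduces to showing that the existence of an unbounded minimizing sequence is equivalent to the reverse inequality $\nu_{cs}^*\ge\nu_d^*$.

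For the forward direction, assuming $\nu_{cs}^*=\nu_d^*$, I would feed the attained minimizer $d^*$ into the same shift construction: the points $x_0+sd^*$ are feasible for \eqref{Pold}, their norms diverge since $\|x_0+sd^*\|\ge s\|d^*\|-\|x_0\|\to\infty$, and the display above shows their objective values tend to $\|d^*\|_1/\|d^*\|=\nu_d^*=\nu_{cs}^*$; choosing $s=t\to\infty$ produces an unbounded minimizing sequence. For the converse, given an unbounded minimizing sequence $\{x^t\}$, I would pass to a subsequence with $\|x^t\|\to\infty$ and normalize, setting $d^t:=x^t/\|x^t\|$. These are unit vectors, so a further subsequence converges to some $d$ with $\|d\|=1$; moreover $Ad^t=b/\|x^t\|\to0$ forces $Ad=0$, so $d$ is feasible for \eqref{limit_prob}. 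Scale invariance and continuity of $\|\cdot\|_1$ then give $\nu_{cs}^*=\lim_t\|x^t\|_1/\|x^t\|=\lim_t\|d^t\|_1=\|d\|_1=\|d\|_1/\|d\|\ge\nu_d^*$, which together with the unconditional bound yields $\nu_{cs}^*=\nu_d^*$.

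The conceptual core, and the step that requires the most care, is the unconditional inequality $\nu_{cs}^*\le\nu_d^*$, since it is what collapses the two-sided equivalence to a single inequality and makes both implications short. The only genuinely technical points in the converse are verifying that the limiting direction $d$ is nonzero (guaranteed by normalizing to the unit sphere) and lies in $\ker A$ (guaranteed by the divergence $\|x^t\|\to\infty$, which sends $b/\|x^t\|$ to $0$); I would be careful to invoke the degree-zero homogeneity at every limit so that the normalization does not alter the objective values.
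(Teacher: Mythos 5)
Your proof is correct and takes essentially the same route as the paper's: the same ray argument (moving along $x_0+sd$ with $Ad=0$ and letting $s\to\infty$) gives $\nu_{cs}^*\le \nu_d^*$, and the same normalize-and-extract-a-limit argument produces a unit vector in $\ker A$ from an unbounded minimizing sequence (respectively, a minimizer attaining $\nu_d^*$), from which both implications follow. Your only departure is organizational---you front-load the unconditional inequality $\nu_{cs}^*\le\nu_d^*$ and the attainment of $\nu_d^*$ (the latter tacitly assumes $\ker A\neq\{0\}$, which is harmless since it follows from $\nu_d^*=\nu_{cs}^*<\infty$ at the one point where attainment is used)---whereas the paper establishes the same two facts inside the respective directions of the equivalence.
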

\begin{proof}
    We first suppose that there exists an unbounded minimizing sequence $ \{x^t\} $ of \eqref{Pold}. By passing to a subsequence if necessary, we may assume without loss of generality that $ \|x^t\|\rightarrow \infty$ and that $\lim_{t\rightarrow\infty}\frac{x^t}{\|x^t\|} = x^*$ for some $x^*$ with $\|x^*\|=1$. Then we have $ \|x^*\|_1 =  \nu^*_{cs} $ using the definition of minimizing sequence, and
	\begin{equation}\label{Ad=0}
		Ax^*= \lim_{t\rightarrow \infty} \frac{Ax^t}{\|x^t\|} = \lim_{t\rightarrow \infty} \frac{b}{\|x^t\|}=0.
	\end{equation}
	One can then see that
	\begin{equation}\label{haha2}
	\nu_{d}^*\leq \frac{\|x^*\|_1}{\|x^*\|} = \|x^*\|_1= \nu_{cs}^* < \infty.
	\end{equation}
    Next, fix any $ x $ such that $ Ax = b $ and choose any $ d\neq 0 $ satisfying $ Ad=0 $ (these exist thanks to $\nu_{d}^*\le \nu_{cs}^*<\infty$). Then it holds that
	\[
	\nu_{cs}^* \leq \frac{\|x+sd\|_1}{\|x+sd\|}
	\]
	for any $ s\in\R $. It follows from the above display that
	\[
	\nu_{cs}^* \leq \lim_{s\rightarrow \infty} \frac{\|x+sd\|_1}{\|x+sd\|} = \frac{\|d\|_1}{\|d\|}.
	\]
	Then we have $ \nu_{cs}^*\leq\nu_d^*$ by the arbitrariness of $ d $. This together with \eqref{haha2} shows that $ \nu_{cs}^*=\nu_d^*$.
	
	We next suppose that $ \nu^*_{cs} =  \nu^*_d $. Since $\nu^*_{cs} < \infty$ (thanks to $A^{-1}\{b\}\neq\emptyset$), there exists a sequence $ \{d^k\} $ satisfying $Ad^k = 0$ and $d^k\neq 0$ such that $	\lim_{k\rightarrow\infty}\frac{\|d^k\|_1}{\|d^k\|} = \nu^*_d$. Passing to a further subsequence if necessary, we may assume without loss of generality that $ \lim_{k\to \infty}\frac{d^k}{\|d^k\|} = d^* $ for some $d^*$ with $ \|d^*\|=1$. It then follows that
\[
Ad^* = \lim_{k\to \infty}\frac{Ad^k}{\|d^k\|} = 0 \ \ {\rm and}\ \ \|d^*\|_1 = \lim_{k\to\infty}\left\|\frac{d^k}{\|d^k\|}\right\|_1 = \nu^*_d.
\]
Now, choose any $ x^0 $ such that $ Ax^0=b $ and define $ x^t = x^0+td^* $ for each $t =1,2,\ldots$ Then we have $ Ax^t = b $ for all $ t $. Moreover $ \|x^t\|\rightarrow \infty $ as $ t\rightarrow \infty $ and
	\[
	\lim_{t\rightarrow\infty}\frac{\|x^t\|_1}{\|x^t\|} =\frac{\|d^*\|_1}{\|d^*\|}= \nu^*_d = \nu^*_{cs}.
	\]
	Thus, $\{x^t\}$ is an unbounded minimizing sequence for \eqref{Pold}. This completes the proof.
\end{proof}

We are now ready to present the theorem on solution existence for \eqref{Pold}.
\begin{theorem}[Solution existence for \eqref{Pold}]\label{thm:solexts_Pold}
	Consider \eqref{Pold}. Suppose that $\ker A $ has the $ s $-spherical section property for some $s > 0$ and there exists $ \widetilde x\in\R^n $ such that $ \|\widetilde x\|_0< m/s $ and $ A\widetilde x=b $. Then the optimal value $ \nu^*_{cs} $ of \eqref{Pold} is attainable, i.e., the set of optimal solutions of \eqref{Pold} is nonempty.
\end{theorem}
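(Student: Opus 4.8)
The plan is to use the dichotomy established in Lemma~\ref{lem:minseq} to rule out unbounded minimizing sequences, and then conclude attainment by a standard compactness-and-continuity argument. Before that, I would record the basic preliminaries: the feasible set $A^{-1}\{b\}$ is nonempty since it contains $\widetilde x$, so $\nu^*_{cs}$ is a well-defined infimum; moreover $\|x\|_1 \ge \|x\|$ for all $x \ne 0$ ensures the objective is bounded below (by $1$), and since $b \ne 0$ we have $0 \notin A^{-1}\{b\}$, so the objective $x \mapsto \|x\|_1/\|x\|$ is continuous on the entire feasible set.

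The crucial step is to prove the strict separation $\nu^*_{cs} < \nu^*_d$ between the optimal values of \eqref{Pold} and \eqref{limit_prob}. For the upper bound, I would exploit the sparse feasible point $\widetilde x$, which is nonzero because $A\widetilde x = b \ne 0$. Applying Cauchy--Schwarz on its support yields $\|\widetilde x\|_1 \le \sqrt{\|\widetilde x\|_0}\,\|\widetilde x\|$, so that $\nu^*_{cs} \le \|\widetilde x\|_1/\|\widetilde x\| \le \sqrt{\|\widetilde x\|_0} < \sqrt{m/s}$, where the last strict inequality is precisely the hypothesis $\|\widetilde x\|_0 < m/s$. For the lower bound, I would invoke the $s$-spherical section property of $\ker A$ directly on \eqref{limit_prob}: since every $d \in \ker A \setminus \{0\}$ satisfies $\|d\|_1/\|d\| \ge \sqrt{m/s}$, we get $\nu^*_d \ge \sqrt{m/s}$. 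Chaining these gives $\nu^*_{cs} < \sqrt{m/s} \le \nu^*_d$, and in particular $\nu^*_{cs} \ne \nu^*_d$.

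With the strict gap in hand, the contrapositive of Lemma~\ref{lem:minseq} shows that \eqref{Pold} admits no unbounded minimizing sequence, hence every minimizing sequence is bounded. To finish, I would take an arbitrary minimizing sequence $\{x^t\}$, pass to a convergent subsequence $x^{t_j} \to x^*$ via Bolzano--Weierstrass, note that $x^*$ is feasible because $A^{-1}\{b\}$ is closed and nonzero because $b \ne 0$, and invoke continuity of the objective to obtain $\|x^*\|_1/\|x^*\| = \nu^*_{cs}$. Thus $x^*$ is an optimal solution of \eqref{Pold}.

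The main obstacle is the gap estimate $\nu^*_{cs} < \nu^*_d$: its success depends on correctly matching the spherical-section lower bound $\sqrt{m/s}$ against the sparsity-driven upper bound $\sqrt{\|\widetilde x\|_0}$, and the hypothesis $\|\widetilde x\|_0 < m/s$ is exactly what forces these two quantities apart. Once this strict inequality is secured, the remaining attainment argument is entirely routine.
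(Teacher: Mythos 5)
Your proposal is correct and follows essentially the same route as the paper: both establish the strict gap $\nu^*_{cs} \le \sqrt{\|\widetilde x\|_0} < \sqrt{m/s} \le \nu^*_d$ via Cauchy--Schwarz and the spherical section property, then use Lemma~\ref{lem:minseq} to rule out unbounded minimizing sequences and conclude by compactness and continuity of the objective at the nonzero limit point. The only differences are cosmetic (your explicit preliminary remarks on boundedness below and well-definedness), so there is nothing to flag.
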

\begin{proof}
	According to the $ s $-spherical property of $ \ker A $ and the definition of $ \nu^*_d $ in \eqref{limit_prob}, we see that $ \nu^*_d \geq \sqrt{\frac{m}{s}} $. It then follows that
	\[
	\nu^*_{cs}\overset{\rm (a)}\leq \frac{\|\widetilde{x}\|_1}{\|\widetilde{x}\|} \overset{\rm (b)}\leq \sqrt{\|\widetilde{x}\|_0} \overset{\rm (c)}<  \sqrt{\frac{m}{s}} \leq \nu^*_d,
	\]
    where (a) follows from the definition of $\nu^*_{cs}$ and the fact that $A\widetilde x = b$, (b) follows from Cauchy-Schwarz inequality and (c) holds by our assumption.
	Invoking Lemma~\ref{lem:minseq} and noting $\nu^*_{cs} < \infty$, we see that there is a bounded minimizing sequence $ \{x^t\} $ for \eqref{Pold}. We can then pass to a convergent subsequence $ \{x^{t_j}\} $ so that $ \lim_{j\rightarrow \infty} x^{t_j}=x^* $ for some $ x^* $ satisfying $ Ax^*=b $. Since $b \neq 0$, this means in particular that $x^* \neq 0$. We then have upon using the continuity of $ \frac{\|\cdot\|_1}{\|\cdot\|} $ at $x^*$ and the definition of minimizing sequence that
	\[
	\frac{\|x^*\|_1}{\|x^*\|} = \lim_{j\rightarrow \infty}\frac{\|x^{t_j}\|_1}{\|x^{t_j}\|} = \nu_{cs}^*.
	\]
This shows that $x^*$ is an optimal solution of \eqref{Pold}. This completes the proof.
\end{proof}

\section{KL exponent of $F$ in \eqref{Fnoiseless} and global convergence of Algorithm~\ref{alg:lou}}\label{sec4}

In this section, we discuss the KL exponent of \eqref{Fnoiseless} and its implication on the convergence rate of the algorithm proposed in \cite[Eq.~11]{WangYanLou19} for solving \eqref{Pold}.
For ease of reference, this algorithm is presented as Algorithm~\ref{alg:lou} below. It was shown in \cite{WangYanLou19} that if the sequence $\{x^t\}$ generated by this algorithm is bounded, then any accumulation point is a stationary point of $F$ in \eqref{Fnoiseless}.
\begin{algorithm}
	\caption{The algorithm proposed in \cite[Eq.~11]{WangYanLou19} for \eqref{Pold}}\label{alg:lou}
	\begin{algorithmic}
		\STATE {\bf Step 0.} Choose $x^0$ with $Ax^0 = b$ and $ \alpha>0 $. Set $ \omega_0 = \|x^0\|_1/\|x^0\| $ and $t=0$.
		
		{\bf Step 1.} Solve the subproblem
		\begin{equation}\label{subp_lou}
			\begin{array}{rl}
				x^{t+1} = \displaystyle\argmin_{x\in \R^{n}} & \displaystyle \|x\|_1 - \frac{\omega_t}{\|x^t\|}\langle x,x^t\rangle		 + \frac{\alpha}2\|x - x^t\|^2\\
				{\rm s.t.}\quad& Ax = b.
			\end{array}
		\end{equation}
		
		{\bf Step 2.} Compute $\omega_{t+1} = \|x^{t+1}\|_1/\|x^{t+1}\|$. Update $t\leftarrow t+1$ and go to {\bf Step 1.}
	\end{algorithmic}
\end{algorithm}

Here, we \add{first remark} that if the sequence $\{x^t\}$ generated by Algorithm~\ref{alg:lou} is bounded, then it converges to a stationary point $x^*$ of $F$ in \eqref{Fnoiseless}. The argument is standard \add{(see \mbox{\cite{Attouch10, Attouch13, BolSabTeb14}})}, making use of {\bf H1}, \textbf{H2}, \textbf{H3} in \cite[Section~2.3]{Attouch13}. We include the proof for the ease of readers.
\begin{proposition}[Global convergence of Algorithm~\ref{alg:lou}]
	Consider \eqref{Pold}. Let $ \{x^t\} $ be the sequence generated by Algorithm~\ref{alg:lou} and suppose that $ \{x^t\}  $ is bounded. Then $ \{x^t\}  $ converges to a stationary point of $F$ in \eqref{Fnoiseless}.
\end{proposition}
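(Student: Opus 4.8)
The plan is to verify the three standard ingredients \textbf{H1}, \textbf{H2}, \textbf{H3} of the abstract descent framework in \cite[Section~2.3]{Attouch13} for the function $F$ in \eqref{Fnoiseless}, and then invoke the fact that $F$ is a KL function (it is semi-algebraic, being a quotient of the semi-algebraic functions $\|\cdot\|_1$ and $\|\cdot\|$ plus the indicator of an affine set). Before that, I would record that the iterates lie in a fixed annulus bounded away from the origin: since $Ax^t = b$ for all $t$ and $A$ has full row rank, $\|b\| = \|Ax^t\| \le \|A\|\,\|x^t\|$, so $\|x^t\| \ge \mu := \|b\|/\|A\| > 0$; together with the assumed bound $\|x^t\| \le M$, every iterate lies in $\{x:\; \mu \le \|x\| \le M\}$. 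In particular each $x^t \neq 0$, so $F(x^t) = \omega_t$ and $\frac{\|\cdot\|_1}{\|\cdot\|}$ is regular and subdifferentiable there by \eqref{subdatbarx}.

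For \textbf{H1}, I would use that $x^t$ is feasible for \eqref{subp_lou}, so optimality of $x^{t+1}$ yields
\[
\|x^{t+1}\|_1 - \frac{\omega_t}{\|x^t\|}\langle x^{t+1}, x^t\rangle + \frac{\alpha}{2}\|x^{t+1}-x^t\|^2 \le \|x^t\|_1 - \omega_t\|x^t\| = 0,
\]
where the last equality uses $\omega_t\|x^t\| = \|x^t\|_1$. Bounding $\langle x^{t+1}, x^t\rangle \le \|x^{t+1}\|\,\|x^t\|$ by Cauchy--Schwarz and dividing through by $\|x^{t+1}\|$ gives $\omega_{t+1} + \frac{\alpha}{2\|x^{t+1}\|}\|x^{t+1}-x^t\|^2 \le \omega_t$, which with $\|x^{t+1}\| \le M$ produces the sufficient-decrease inequality $F(x^{t+1}) + \frac{\alpha}{2M}\|x^{t+1}-x^t\|^2 \le F(x^t)$.

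The main work is \textbf{H2}, the relative-error bound. I would write the first-order optimality condition of \eqref{subp_lou}: there are $\xi^{t+1} \in \partial\|x^{t+1}\|_1$ and a multiplier $\lambda^{t+1} \in \R^m$ with $\xi^{t+1} - \frac{\omega_t}{\|x^t\|}x^t + \alpha(x^{t+1}-x^t) + A^\top\lambda^{t+1} = 0$. Since $\frac{\|\cdot\|_1}{\|\cdot\|}$ is regular at $x^{t+1}$ and $A^{-1}\{b\}$ is an affine set (so $N_{A^{-1}\{b\}}(x^{t+1}) = {\rm Range}(A^\top)$), the exact sum rule together with \eqref{subdatbarx} shows that
\[
v^{t+1} := \frac{1}{\|x^{t+1}\|}\xi^{t+1} - \frac{\|x^{t+1}\|_1}{\|x^{t+1}\|^3}x^{t+1} + A^\top\mu^{t+1} \in \partial F(x^{t+1})
\]
for any multiplier $\mu^{t+1}$. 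Choosing $\mu^{t+1} = \lambda^{t+1}/\|x^{t+1}\|$ to cancel the range-space terms and substituting the optimality relation, $v^{t+1}$ reduces to $\frac{1}{\|x^{t+1}\|}\bigl(g(x^t) - g(x^{t+1})\bigr) - \frac{\alpha}{\|x^{t+1}\|}(x^{t+1}-x^t)$, where $g(x) := \frac{\|x\|_1}{\|x\|^2}x$. The crux is then to show $g$ is Lipschitz on the annulus $\{\mu \le \|x\| \le M\}$; this follows by writing $g$ as the product of the globally Lipschitz bounded scalar map $x\mapsto \|x\|_1$ and the smooth bounded map $x\mapsto x/\|x\|^2$ (whose Jacobian is $O(\mu^{-2})$ on the annulus), giving $\|g(x^t)-g(x^{t+1})\| \le L_g\|x^t-x^{t+1}\|$. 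Combined with $\|x^{t+1}\| \ge \mu$, this yields $\|v^{t+1}\| \le \frac{L_g+\alpha}{\mu}\|x^{t+1}-x^t\|$, which is \textbf{H2}.

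Finally, \textbf{H3} is immediate: any subsequence $x^{t_j}\to x^*$ has $x^* \in A^{-1}\{b\}$ by closedness of the feasible set, with $\|x^*\| \ge \mu > 0$, so continuity of $\frac{\|\cdot\|_1}{\|\cdot\|}$ at $x^*$ gives $F(x^{t_j})\to F(x^*)$. With \textbf{H1}--\textbf{H3} in hand and $F$ a KL function, the standard analysis of \cite{Attouch13} (see also \cite{BolSabTeb14}) shows that $\{x^t\}$ is Cauchy and converges to a single limit $x^*$, while \textbf{H2} together with the closedness of the graph of $\partial F$ forces $0 \in \partial F(x^*)$, i.e., $x^*$ is a stationary point of $F$. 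I expect the Lipschitz continuity of $g$ (equivalently, controlling $\|g(x^t)-g(x^{t+1})\|$ by $\|x^{t+1}-x^t\|$) to be the only non-routine step; everything else becomes bookkeeping once the uniform lower bound $\|x^t\| \ge \mu$ is in place.
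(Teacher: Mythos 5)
Your proposal is correct and follows essentially the same route as the paper's proof: establishing the sufficient decrease (the paper cites \cite[Lemma~1]{WangYanLou19} where you re-derive it), building an element of $\partial F(x^{t+1})$ from the subproblem's KKT conditions via \eqref{subdatbarx} and the regular sum rule, bounding its norm by $\|x^{t+1}-x^t\|$ using a uniform lower bound on $\|x^t\|$ and a Lipschitz estimate for $x\mapsto \frac{\|x\|_1}{\|x\|^2}x$ on a compact annulus, and then invoking the KL framework of \cite{Attouch13} with $F$ semi-algebraic.
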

\begin{proof}
First, according to \cite[Lemma~1]{WangYanLou19}, the sequence $ \{\omega_t\} $ generated by Algorithm~\ref{alg:lou} enjoys the following sufficient descent property:
\begin{equation}\label{wdecrease_lou}
	\omega_t-\omega_{t+1}\geq \frac{\alpha}{2\|x^{t+1}\|}\|x^{t+1}-x^t\|^2.
\end{equation}
Now, if we let $ \lambda^t $ denote a Lagrange multiplier of the subproblem \eqref{subp_lou} at iteration $ t $, one then see from the first-order optimality condition that
\begin{equation}\label{foc_lou}
-A^T\lambda^t+\frac{\|x^t\|_1}{\|x^t\|^2}x^t-\alpha(x^{t+1}-x^t)\in\partial \|x^{t+1}\|_1.
\end{equation}
On the other hand, using \eqref{subdatbarx} and noting that $x^t\neq 0$ for all $t$, we have
\[
\begin{aligned}
&\frac{1}{\|x^{t+1}\|}\partial \|x^{t+1}\|_1 -\frac{\|x^{t+1}\|_1}{\|x^{t+1}\|^3}x^{t+1} + \frac{A^T\lambda^t}{\|x^{t+1}\|}\\
& = \partial\frac{\|x^{t+1}\|_1}{\|x^{t+1}\|} + \frac{A^T\lambda^t}{\|x^{t+1}\|} \subset \partial\frac{\|x^{t+1}\|_1}{\|x^{t+1}\|} + N_{A^{-1}\{b\}}(x^{t+1}) = \partial F(x^{t+1}),
\end{aligned}
\]
where the last equality follows from \cite[Corollary~10.9]{RockWets98}, the regularity at $x^{t+1}$ of $\frac{\|\cdot\|_1}{\|\cdot\|}$ (see \eqref{subdatbarx}) and $\delta_{A^{-1}\{b\}}(\cdot)$ (see \cite[Theorem~6.9]{RockWets98}), and the definition of $ F $ in \eqref{Fnoiseless}.
Combining \eqref{foc_lou} and the above display, we obtain that
\[
\frac{1}{\|x^{t+1}\|}\left(\frac{\|x^t\|_1}{\|x^t\|^2}x^t-\frac{\|x^{t+1}\|_1}{\|x^{t+1}\|^2}x^{t+1}\right) -\frac{\alpha}{\|x^{t+1}\|}(x^{t+1}-x^t)\in\partial F(x^{t+1}).
\]
On the other hand, since $\|x^t\|\ge \inf_{y\in A^{-1}\{b\}}\|y\| > 0$ for all $t$ (thanks to $Ax^t = b$ and $b\neq 0$) and $ \{x^t\} $ is bounded, we see that there exists $C_0 > 0$ so that
\[
\left\|\frac{\|x^t\|_1}{\|x^t\|^2}x^t-\frac{\|x^{t+1}\|_1}{\|x^{t+1}\|^2}x^{t+1}\right\|\le C_0\|x^{t+1} - x^t\| \text{ for all } t.
\]
Thus, in view of the above two displays, we conclude that
\[
{\rm dist}(0, \partial F(x^{t+1})) \leq \frac{C_0+\alpha}{\inf_{y\in A^{-1}\{b\}}\|y\|}\|x^{t+1}-x^t\| \text{ for all } t.
\]
Using the boundedness of $\{x^t\}$, \eqref{wdecrease_lou}, the above display and the continuity of $F$ on its domain, we see that the conditions {\bf H1}, \textbf{H2}, \textbf{H3} in \cite[Section~2.3]{Attouch13} are satisfied. Since $F$ is clearly proper closed semi-algebraic and hence a KL function, we can then invoke \cite[Theorem~2.9]{Attouch13} to conclude that $ \{x^t\} $ converges to a stationary point of $F$.
\end{proof}

While it is routine to show that the sequence $\{x^t\}$ generated by Algorithm~\ref{alg:lou} is convergent when it is bounded, it is more challenging to deduce the asymptotic convergence rate: the latter typically requires an estimate of the KL exponent of $F$ in \eqref{Fnoiseless}, which was used in the above analysis. In what follows, we will show that the KL exponent of $F$ is $\frac12$. To do this, we will first establish a calculus rule for deducing the KL exponent of a fractional objective from the difference between the numerator and (a suitable scaling of) the denominator: this is along the line of the calculus rules for KL exponents developed in \cite{LiPong16,YuLiPong19,LiuPong19}, and can be of independent interest.

\subsection{KL exponent of fractional functions}
Let $f:\R^n\to \R\cup\{\infty\}$ be proper closed and $g:\R^n\to \R$ be a continuous nonnegative function that is continuously differentiable on an open set containing ${\rm dom}\,f$. Suppose that $\inf f\ge 0$ and $\inf_{{\rm dom}\,f} g > 0$. We consider the following fractional programming problem:
\begin{equation}\label{4prop:fractional}
	\min_{x} G(x) := \frac{f(x)}{g(x)}.
\end{equation}
In algorithmic developments for solving \eqref{4prop:fractional} (see, for example, \cite{CrFeNg08,Din67}), it is customary to consider functions of the following form
\begin{equation}\label{4Hu}
	H_u(x) := f(x) - \frac{f(u)}{g(u)}g(x),
\end{equation}
where $u$ typically carries information from the previous iterate. \add{In the literature, KL-type assumptions are usually imposed on $ G $ or $ H_u $ for establishing the global convergence of the sequence generated by first-order methods for solving \eqref{4prop:fractional}; see, for example, the discussions in {\cite[Theorem~16]{BotCse17} and \cite[Theorem~5.5]{BotLi20}}.} \add{Here,} we study a relationship between the KL exponent of $G$ in \eqref{4prop:fractional} and that of $H_{\bar x}$ in \eqref{4Hu} when $\bar x$ is a stationary point of $G$.

\begin{theorem}[KL exponent of fractional functions]\label{4fractionalthm}
	Let $f:\R^n\to \R\cup\{\infty\}$ be a proper closed function with $\inf f\ge 0$ and $g:\R^n\to \R$ be a continuous nonnegative function that is continuously differentiable on an open set containing ${\rm dom}\,f$ with $\inf_{{\rm dom}\,f} g > 0$. Assume that one of the following conditions hold:
	\begin{enumerate}[{\rm (i)}]
		\item $f$ is locally Lipschitz.
		\item $f = h + \delta_D$ for some continuously differentiable function $h$ and nonempty closed set $D$.
		\item $f = h + \delta_D$ for some locally Lipschitz function $h$ and nonempty closed set $D$, and $h$ and $D$ are regular at every point in $D$.
	\end{enumerate}
	Let $\bar x$ be such that $0 \in \partial G(\bar x)$, where $G$ is defined as in \eqref{4prop:fractional}. Then $\bar x\in {\rm dom}\,\partial H_{\bar x}$. If $H_{\bar x}$ defined as in \eqref{4Hu} satisfies the KL property with exponent $\theta\in [0,1)$ at $\bar x$, then so does $G$.
\end{theorem}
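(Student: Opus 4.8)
The plan is to reduce the whole statement to a single exact subdifferential calculus identity relating $\partial G$ and $\partial H_{\bar x}$, and then push the KL inequality through it. Write $c := G(\bar x) = f(\bar x)/g(\bar x)$, so that $H_{\bar x}(x) = f(x) - c\,g(x)$ and, on ${\rm dom}\,f$,
\begin{equation*}
G(x) - G(\bar x) = \frac{f(x) - c\,g(x)}{g(x)} = \frac{H_{\bar x}(x)}{g(x)}.
\end{equation*}
Since $g$ is smooth and bounded away from $0$ on ${\rm dom}\,f$, adding $-c\,g$ is a smooth perturbation, so $\partial H_{\bar x}(x) = \partial f(x) - c\,\nabla g(x)$ holds exactly by the sum rule \cite[Exercise~8.8(c)]{RockWets98}. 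The crux is therefore the quotient rule
\begin{equation}\label{pf:QR}
\partial G(x) = \frac{1}{g(x)}\partial f(x) - \frac{f(x)}{g(x)^2}\nabla g(x),
\end{equation}
which, after multiplying by $g(x)$ and substituting $G(x) = c + H_{\bar x}(x)/g(x)$, is equivalent to
\begin{equation}\label{pf:KEY}
g(x)\,\partial G(x) + \frac{H_{\bar x}(x)}{g(x)}\nabla g(x) = \partial H_{\bar x}(x).
\end{equation}
In fact only the inclusion ``$\subseteq$'' in \eqref{pf:KEY} will be needed below.

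I would establish \eqref{pf:QR} under each of the three structural hypotheses. Under (ii), $G = h/g + \delta_D$ with $h/g$ smooth on an open set containing $D$; since $N_D$ is a cone the positive factor $1/g(x)$ leaves it unchanged, and the sum rule for a smooth function plus $\delta_D$ yields \eqref{pf:QR} directly. Under (iii), the same decomposition applies but $h/g$ is merely Lipschitz; I would first note that multiplication by the smooth positive $1/g$ preserves regularity (since $\widehat\partial(h/g) = \frac1g\widehat\partial h - \frac{h}{g^2}\nabla g = \frac1g\partial h - \frac{h}{g^2}\nabla g = \partial(h/g)$ using $\widehat\partial h = \partial h$ and $1/g>0$), then invoke the exact sum rule valid under regularity of $h/g$ and $D$. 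Under (i), $G = f/g$ is itself locally Lipschitz and $f = g\,G$; the product rule for a smooth function times a locally Lipschitz function --- which I would verify at the level of regular subdifferentials via a first-order Taylor expansion of $g$ and then pass to limiting subdifferentials using continuity of $g$, $\nabla g$ and $G$ --- gives $\partial f(x) = g(x)\partial G(x) + G(x)\nabla g(x)$, i.e.\ \eqref{pf:QR}. The first assertion $\bar x \in {\rm dom}\,\partial H_{\bar x}$ is then immediate: evaluating \eqref{pf:KEY} at $\bar x$, where $H_{\bar x}(\bar x) = 0$, gives $\partial H_{\bar x}(\bar x) = g(\bar x)\partial G(\bar x) \ni 0$ because $0 \in \partial G(\bar x)$.

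For the KL transfer, suppose $H_{\bar x}$ satisfies the KL inequality ${\rm dist}(0, \partial H_{\bar x}(x)) \ge \frac{1}{a_0(1-\theta)}\,H_{\bar x}(x)^\theta$ on $\{x \in U: 0 < H_{\bar x}(x) < a\}$. Since $g > 0$, the set $\{G(x) > G(\bar x)\}$ coincides with $\{H_{\bar x}(x) > 0\}$, and the displayed identity makes the two function gaps comparable. Using the inclusion in \eqref{pf:KEY} to send a minimal-norm element of $\partial G(x)$ to a corresponding element of $\partial H_{\bar x}(x)$, I obtain
\begin{equation*}
{\rm dist}(0, \partial H_{\bar x}(x)) \le g(x)\,{\rm dist}(0, \partial G(x)) + \frac{H_{\bar x}(x)}{g(x)}\|\nabla g(x)\|.
\end{equation*}
Combining this with the KL inequality for $H_{\bar x}$ and writing the error term as $H_{\bar x}(x)^\theta \cdot \frac{H_{\bar x}(x)^{1-\theta}\|\nabla g(x)\|}{g(x)}$, I would shrink $U$ and the threshold $a$ so that the second factor is at most half of $\frac{1}{a_0(1-\theta)}$; this is possible since $\theta < 1$ forces $H_{\bar x}(x)^{1-\theta} \to 0$ while $\|\nabla g\|/g$ stays bounded. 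Absorbing the error term yields ${\rm dist}(0, \partial G(x)) \ge c'\,H_{\bar x}(x)^\theta$ for some $c' > 0$, and substituting $H_{\bar x}(x) = g(x)(G(x) - G(\bar x))$ together with $g$ bounded away from zero converts this into ${\rm dist}(0, \partial G(x)) \ge c''\,(G(x) - G(\bar x))^\theta$, which is the KL property for $G$ with exponent $\theta$.

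The main obstacle is the \emph{exactness} of the quotient rule \eqref{pf:QR}, particularly in case (i): the limiting-subdifferential product and quotient rules are only inclusions in general, so the argument hinges entirely on one factor being continuously differentiable and strictly positive, which I exploit through the regular-subdifferential computation before passing to limits (cases (ii) and (iii) instead trade this for regularity of $h$ and $D$). Everything after \eqref{pf:KEY} is a routine, if careful, absorption estimate that also covers the limiting case $\theta = 0$, where the error term simply tends to zero.
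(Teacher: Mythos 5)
Your proposal is correct and follows essentially the same route as the paper: both rest on the exact subdifferential identities $\partial H_{\bar x}(x) = \partial f(x) - G(\bar x)\nabla g(x)$ and $\partial G(x) = \frac{1}{g(x)}\left(\partial f(x) - G(x)\nabla g(x)\right)$, established case-by-case just as you do (the paper cites Mordukhovich's Corollary~1.111(i) for case (i) where you re-derive the product rule from regular subdifferentials), followed by the same absorption estimate bounding ${\rm dist}(0,\partial G(x))$ below by a multiple of $(H_{\bar x}(x))^\theta$ minus an error of order $H_{\bar x}(x)$, absorbed because $(H_{\bar x}(x))^{1-\theta}$ is small near $\bar x$. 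The only cosmetic differences are that you package the two identities into the single relation $g(x)\,\partial G(x) + \frac{H_{\bar x}(x)}{g(x)}\nabla g(x) = \partial H_{\bar x}(x)$ and read off $\bar x\in{\rm dom}\,\partial H_{\bar x}$ directly from it at $\bar x$, whereas the paper runs the triangle inequality on $\inf_{\xi\in\partial f(x)}\|\xi - G(x)\nabla g(x)\|$ and obtains the domain claim from the chain ${\rm dom}\,f = {\rm dom}\,\partial f = {\rm dom}\,\partial H_{\bar x}$.
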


\begin{proof}
	It is clear that ${\rm dom}\, H_{\bar x} = {\rm dom}\,f = {\rm dom}\,G$.
	We first argue that under the assumptions on $f$ and $ g $, we have for any $x\in {\rm dom}\,G$ that
	\begin{equation}\label{regularity}
		\partial H_{\bar x}(x) = \partial f(x) - G(\bar x)\nabla g(x)\ \ {\rm and}\ \ \partial G(x) = \frac1{g(x)}\left(\partial f(x) - G(x)\nabla g(x)\right).
	\end{equation}
	Indeed, in all cases, the first relation in \eqref{regularity} follows from \cite[Exercise~8.8(c)]{RockWets98}. When $f$ is locally Lipschitz, the second relation in \eqref{regularity} follows from \cite[Corollary~1.111(i)]{Morduk06}. When $f = h + \delta_D$ for some continuously differentiable function $h$ and nonempty closed set $D$, the second relation in \eqref{regularity} follows by first applying \cite[Exercise~8.8(c)]{RockWets98} to $G = \frac{h}g + \delta_D$, then applying the usual quotient rule to the differentiable function $\frac{h}g$, and subsequently using $\partial f = \nabla h + \partial \delta_D$ (thanks to \cite[Exercise~8.8(c)]{RockWets98}). Finally, when $f = h + \delta_D$ for some locally Lipschitz function $h$ and nonempty closed set $D$ with $h$ and $D$ being  regular at every point in $D$, we have that the function $\frac{h}g$ is  regular for all $x\in D$ in view of \cite[Corollary~1.111(i)]{Morduk06}. This together with the regularity of $D$ gives
	\[
	\begin{aligned}
		\partial G(x) &= \partial\left(\frac{h}{g}\right)(x) + \partial\delta_D(x)\\
		& = \frac{g(x)\partial h(x) - h(x)\nabla g(x)}{g(x)^2} + \partial\delta_D(x)\\
		& = \frac{g(x)\partial f(x) - f(x)\nabla g(x)}{g(x)^2},
	\end{aligned}
	\]
	where the first and the last equalities follow from \cite[Corollary~10.9]{RockWets98} and \cite[Exercise~8.14]{RockWets98}, and the second equality follows from \cite[Corollary~1.111(i)]{Morduk06}.
	
	Now, in view of \eqref{regularity}, we have ${\rm dom}\, \partial H_{\bar x} = {\rm dom}\,\partial f = {\rm dom}\,\partial G$. In addition, in all three cases, it holds that ${\rm dom}\, f = {\rm dom}\,\partial f$. Indeed, when $f$ is locally Lipschitz, this claim follows from Exercise~8(c) of \cite[Section~6.4]{BorLew06}. When $f = h+\delta_D$ as in (ii), the claim follows from \cite[Exercise~8.8(c)]{RockWets98}, while for case (iii), we have ${\rm dom}\, f = {\rm dom}\,\partial f = D$ in view of \cite[Corollary~10.9]{RockWets98}, \cite[Exercise~8.14]{RockWets98} and Exercise~8(c) of \cite[Section~6.4]{BorLew06}. Consequently, in all three cases, we have
	\[
	\Xi:= {\rm dom}\,G = {\rm dom}\,\partial G = {\rm dom}\,H_{\bar x}= {\rm dom}\,\partial H_{\bar x} = {\rm dom}\, f = {\rm dom}\,\partial f,
	\]
	and $H_{\bar x}$ is continuous relative to $\Xi$. In particular, $\bar x\in {\rm dom}\,\partial G= {\rm dom}\,\partial H_{\bar x}$.
	
	Let $U$ be the open set containing ${\rm dom}\,f$ on which $g$ is continuously differentiable.
	Since $H_{\bar x}$ satisfies the KL property with exponent $\theta$ at $\bar x$ and is continuous \add{relative to} $\Xi$, there exist $\epsilon > 0$ and $c > 0$ so that $B(\bar x,2\epsilon)\subseteq U$ and
	\begin{equation}\label{4KLH}
		{\rm dist}(0,\partial H_{\bar x}(x)) \ge c (H_{\bar x}(x) - H_{\bar x}(\bar x))^\theta = c (H_{\bar x}(x))^\theta
	\end{equation}
	whenever $x\in \Xi$, $H_{\bar x}(x) > 0$ and $\|x - \bar x\|\le \epsilon$. Let $M\!\! := \sup_{\|x-\bar x\|\le\epsilon}\max\{g(x),\|\nabla g(x)\|\}$, which is finite as $g$ is continuously differentiable on $U\supseteq B(\bar x,2\epsilon)$. \add{Using the facts that $\theta\in [0,1)$, $ H_{\bar x} $ is continuous relative to $\Xi$,} $H_{\bar x}(\bar x) = 0$ and $\inf_{{\rm dom}\,f} g > 0$, \add{we deduce that} there exists $\epsilon'\in (0,\epsilon)$ such that
	\begin{equation}\label{4Hineq}
		|H_{\bar x}(x)|^{1-\theta} \le \frac{c\inf_{{\rm dom}\,f} g}{2M}\ \ \ \mbox{ whenever }\|x - \bar x\|\le \epsilon'\ {\rm and}\ x\in \Xi,
	\end{equation}
	where $c$ is given in \eqref{4KLH}.
	
	Now, consider any $x\in \Xi$ satisfying $\|x - \bar x\|\le \epsilon'$ and $G(\bar x) < G(x) < G(\bar x)+\epsilon'$. Then we have from \eqref{regularity} that
	\begin{align*}
	&{\rm dist}(0,\partial G(x))  = \frac{1}{g(x)}\inf_{\xi\in \partial f(x)}\|\xi - G(x)\nabla g(x)\|\overset{\rm (a)}\ge \frac{1}{M}\inf_{\xi\in \partial f(x)}\|\xi - G(x)\nabla g(x)\|\\
	&  \overset{\rm (b)}\ge \frac{1}{M}\inf_{\xi\in \partial f(x)}\|\xi - G(\bar x)\nabla g(x)\| - \frac1M |G(x) - G(\bar x)|\|\nabla g(x)\|\\
	&  \overset{\rm (c)}\ge  \frac{1}{M}\inf_{\xi\in \partial f(x)}\|\xi - G(\bar x)\nabla g(x)\| - (G(x) - G(\bar x))\\
	&  = \frac{1}{M}{\rm dist}(0,\partial H_{\bar x}(x)) - \frac1{g(x)}H_{\bar x}(x) \overset{\rm (d)}\ge \frac{1}{M}{\rm dist}(0,\partial H_{\bar x}(x))  - \frac1{\inf_{{\rm dom}\,f} g}H_{\bar x}(x)\\
	& \overset{\rm (e)}\ge \frac{c}{M}(H_{\bar x}(x))^\theta - \frac1{\inf_{{\rm dom}\,f} g}H_{\bar x}(x)\overset{\rm (f)}\ge \frac{c}{2M}(H_{\bar x}(x))^\theta \\
& = \frac{c(g(x))^\theta}{2M}(G(x) - G(\bar x))^\theta \overset{\rm (g)}\ge \frac{c(\inf_{{\rm dom}\,f} g)^\theta}{2M}(G(x) - G(\bar x))^\theta,
	\end{align*}
	where (a) holds because $g(x)\le M$, (b) follows from the triangle inequality, (c) holds because $\|\nabla g(x)\|\le M$ and $G(x) > G(\bar x)$, (d) holds because $H_{\bar x}(x) > 0$ (thanks to $G(x) > G(\bar x)$), (e) then follows from \eqref{4KLH} and (f) follows from \eqref{4Hineq} and the fact that $H_{\bar x}(x) > 0$. Finally, (g) holds because $G(x) > G(\bar x)$. This completes the proof.
\end{proof}

\subsection{KL exponent of $ F $ in \eqref{Fnoiseless}}\label{sec42}

Before proving our main result concerning the KL exponent of $F$ in \eqref{Fnoiseless}, we also need the following simple proposition.

\begin{proposition}\label{chainrule}
	Let $p$ be a proper closed function, and let $\bar x\in {\rm dom}\,p$ be such that $p(\bar x)>0$. Then the following statements hold.
	\begin{enumerate}[{\rm (i)}]
		\item We have $\partial (p^2)(x) = 2p(x) \partial p(x)$ for all $x$ sufficiently close to $\bar x$.
		\item Suppose in addition that $\bar x\in {\rm dom}\,\partial (p^2)$ and $p^2$ satisfies the KL property at $\bar x$ with exponent $\theta\in [0,1)$. Then $p$ satisfies the KL property at $\bar x$ with exponent $\theta\in [0,1)$.
	\end{enumerate}
\end{proposition}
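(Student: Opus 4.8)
The plan is to prove (i) first and then obtain (ii) as a direct consequence. For (i), the starting observation is that since $p$ is lower semicontinuous and $p(\bar x)>0$, there is a neighborhood $V$ of $\bar x$ on which $p>\frac12 p(\bar x)>0$; off ${\rm dom}\,p$ both sides of the claimed identity are empty by convention, so it suffices to work on $V\cap{\rm dom}\,p$, where $t\mapsto t^2$ restricted to $(0,\infty)$ is $C^1$ and strictly increasing. Morally the identity $\partial(p^2)=2p\,\partial p$ is a chain rule for composition with a smooth increasing outer function, but because $p$ is only lower semicontinuous (it may jump) I would not invoke a generic chain rule and would instead verify the identity from the definitions.

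Concretely, I would first establish the regular-subdifferential identity $\widehat\partial(p^2)(x)=2p(x)\widehat\partial p(x)$ for $x\in V$. The algebraic key is the factorization $p(y)^2-p(x)^2=(p(y)-p(x))^2+2p(x)(p(y)-p(x))$. The inclusion $\supseteq$ is immediate: if $w\in\widehat\partial p(x)$, the nonnegative square term only helps, and the remaining part yields exactly the liminf condition defining $2p(x)w\in\widehat\partial(p^2)(x)$. The reverse inclusion $\subseteq$ is the main obstacle, since for merely lower semicontinuous $p$ one must rule out a sharp downward variation of $p$ along some $y_k\to x$ being ``hidden'' inside the square. Here positivity is decisive: working with the contrapositive, suppose the difference quotient $\frac{(p(y)-p(x))-\langle w,y-x\rangle}{\|y-x\|}$ has a negative liminf along $y_k\to x$. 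If that quotient stays bounded, then $p(y_k)\to p(x)$, the square term is negligible, and the $p^2$-quotient tends to $2p(x)L<0$; if it diverges to $-\infty$, then rewriting the $p^2$-quotient and using $p(y_k)+p(x)>p(x)>0$ forces it to $-\infty$. Either way $2p(x)w\notin\widehat\partial(p^2)(x)$, giving $\subseteq$.

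Passing from regular to limiting subdifferentials is then routine on $V$: the equivalences $v^t\in\widehat\partial p(x^t)\Leftrightarrow 2p(x^t)v^t\in\widehat\partial(p^2)(x^t)$ and $p(x^t)\to p(x)\Leftrightarrow p(x^t)^2\to p(x)^2$ (valid since $p>0$ and squaring is a homeomorphism of $(0,\infty)$) show that the limiting processes defining $\partial(p^2)(x)$ and $\partial p(x)$ correspond under multiplication by the positive convergent factor $2p(x^t)\to 2p(x)$, whence $\partial(p^2)(x)=2p(x)\partial p(x)$. In particular, since $p(\bar x)>0$, the hypothesis $\bar x\in{\rm dom}\,\partial(p^2)$ in (ii) yields $\bar x\in{\rm dom}\,\partial p$.

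For (ii), I would feed (i) into the KL inequality. By (i), ${\rm dist}(0,\partial(p^2)(x))=2p(x)\,{\rm dist}(0,\partial p(x))$ near $\bar x$, and $p^2(x)-p^2(\bar x)=(p(x)-p(\bar x))(p(x)+p(\bar x))$. Restricting to the KL regime $p(\bar x)<p(x)<p(\bar x)+a'$, which forces $p(x)>p(\bar x)>0$, the auxiliary factors $p(x)$ and $p(x)+p(\bar x)$ lie between fixed positive constants. Substituting these into the KL inequality for $p^2$ with desingularizing function $\nu\mapsto a_0\nu^{1-\theta}$ and rearranging, the bounded factors are absorbed into the constant, producing ${\rm dist}(0,\partial p(x))\ge c'(p(x)-p(\bar x))^\theta$ on a possibly smaller neighborhood; here one shrinks $a'$ so that $p(x)<p(\bar x)+a'$ guarantees $p^2(x)<p^2(\bar x)+a$. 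This is precisely the KL property for $p$ at $\bar x$ with exponent $\theta$. The only remaining checks are elementary: the equivalence of the two value-gap conditions via monotonicity of squaring on $(0,\infty)$, and the uniform positive bounds on the auxiliary factors, both of which follow from $p(\bar x)>0$ and the upper bound on $p(x)$ in the KL regime.
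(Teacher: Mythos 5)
Your proof is correct, and its overall architecture coincides with the paper's: first establish the identity $\widehat\partial(p^2)(x)=2p(x)\widehat\partial p(x)$ on a neighborhood of $\bar x$ where $p$ is positive, then pass to limiting subdifferentials using the positivity and convergence of the factor $2p(x^t)$, and finally, for (ii), combine the resulting scaling of ${\rm dist}(0,\partial(p^2)(x))$ with the factorization $p^2(x)-p^2(\bar x)=(p(x)-p(\bar x))(p(x)+p(\bar x))$ and absorb the uniformly bounded positive factors into the KL constant, after shrinking the neighborhood so that the value-gap condition for $p$ implies the one for $p^2$. The one substantive difference is in how the regular-subdifferential identity is obtained: the paper simply cites \cite[Lemma~1]{NghThe09}, whereas you verify it from the definition via the factorization $p(y)^2-p(x)^2=(p(y)-p(x))^2+2p(x)(p(y)-p(x))$, with the reverse inclusion handled by contradiction in two cases (difference quotient tending to a finite negative limit, where $p(y_k)\to p(x)$ kills the square term, versus tending to $-\infty$, where $p(y_k)+p(x)\ge p(\bar x)/2>0$ forces the $p^2$-quotient to $-\infty$). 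Both cases are argued correctly, and positivity of $p$ near $\bar x$ is used exactly where it is needed. What your route buys is self-containment: the statement is reduced to an elementary two-case computation rather than an external lemma on error bounds in Asplund spaces; what the paper's route buys is brevity. Part (ii) of your argument is essentially identical to the paper's chain of inequalities.
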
 
\begin{proof}
	Since $p(\bar x) > 0$ and $p$ is closed, there exists $\epsilon > 0$ so that
	\begin{equation*}
		0< p(x) < \infty
	\end{equation*}
	whenever $\|x - \bar x\|\le \epsilon$ and $x\in {\rm dom}\,p$. Then we deduce from \cite[Lemma~1]{NghThe09} that
	\begin{equation}\label{haha1}
		\widehat \partial (p^2)(x) = 2p(x)\widehat \partial p(x)\ \ {\rm whenever}\ x\in {\rm dom}\,p\ {\rm and}\ \|x - \bar x\|\le \epsilon.
	\end{equation}

	Using \eqref{haha1}, and invoking the definition of limiting subdifferential and by shrinking $\epsilon$ if necessary, we deduce that
	\begin{equation}\label{squarerule}
		\partial (p^2)(x) = 2p(x) \partial p(x) \ \ {\rm whenever}\ x\in {\rm dom}\,p\ {\rm and}\ \|x - \bar x\|\le \epsilon.
	\end{equation}
	In particular, if $\bar x\in {\rm dom}\,\partial (p^2)$, then $\bar x\in {\rm dom}\,\partial p$.
	
	When $p^2$ also satisfies the KL property at $\bar x$ with exponent $\theta$, by shrinking $\epsilon$ further if necessary, we see that there exists $c > 0$ so that
	\begin{equation}\label{hahaKL}
		{\rm dist}(0,\partial(p^2)(x)) \ge c(p^2(x) - p^2(\bar x))^\theta,
	\end{equation}
	whenever $p^2(\bar x) < p^2(x) < p^2(\bar x) + \epsilon(2p(\bar x) + \epsilon)$ and $\|x - \bar x\|\le \epsilon$. Thus, for $x\in {\rm dom}\,\partial p$ satisfying $\|x - \bar x\|\le \epsilon$ and $p(\bar x) < p(x) < p(\bar x) + \epsilon$, we have from \eqref{squarerule} that
	\[
	\begin{aligned}
	&{\rm dist}(0,\partial p(x)) = \frac1{2p(x)}{\rm dist}(0,\partial(p^2)(x))\ge \frac1{2p(\bar x)+2\epsilon}{\rm dist}(0,\partial(p^2)(x))\\
	& \overset{\rm (a)}\ge \frac{c}{2p(\bar x)+2\epsilon}(p^2(x) - p^2(\bar x))^\theta= \frac{c}{2p(\bar x)+2\epsilon}(p(x) + p(\bar x))^\theta(p(x) - p(\bar x))^\theta\\
	& \ge \frac{c[p(\bar x)]^{\theta}}{2^{1-\theta}(p(\bar x) + \epsilon)}(p(x) - p(\bar x))^\theta,
	\end{aligned}
	\]
	where (a) follows from \eqref{hahaKL}.
	This completes the proof.
\end{proof}

We are now ready to show that the KL exponent of $F$ in \eqref{Fnoiseless} is $\frac12$. We remark that if the set ${\cal X} := \{x:\; 0\in \partial F(x)\}$ is empty, then this claim holds trivially in view of \cite[Lemma~2.1]{LiPong16}. However, in general, one can have ${\cal X}\neq \emptyset$. Indeed, according to Theorem~\ref{thm:solexts_Pold} and \cite[Theorem~10.1]{RockWets98}, we have ${\cal X}\neq \emptyset$ with high probability when $A$ is generated in a certain way.
\begin{theorem}
	The function $ F $ in \eqref{Fnoiseless}
	is a KL function with exponent $\frac12$.
\end{theorem}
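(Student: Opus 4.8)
The plan is to verify the KL property with exponent $\frac12$ at every $\widehat x\in{\rm dom}\,\partial F$. Since ${\rm dom}\,F = A^{-1}\{b\}$ and $b\neq 0$, every such $\widehat x$ is nonzero, so $F(\widehat x)=\|\widehat x\|_1/\|\widehat x\|>0$. If $0\notin\partial F(\widehat x)$, then since $F$ is continuous relative to its domain and $\partial F$ is outer semicontinuous, ${\rm dist}(0,\partial F(x))$ stays bounded away from $0$ for $x$ near $\widehat x$, so the KL inequality holds there with any exponent, in particular $\frac12$ (this is the reasoning behind \cite[Lemma~2.1]{LiPong16}). Thus the real work is at a stationary point $\widehat x$, i.e.\ one with $0\in\partial F(\widehat x)$. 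At such a point I would apply Proposition~\ref{chainrule} with $p=F$: since $F(\widehat x)>0$, part~(i) gives $\partial(F^2)(\widehat x)=2F(\widehat x)\partial F(\widehat x)\ni 0$, so $\widehat x\in{\rm dom}\,\partial(F^2)$, and by part~(ii) it then suffices to prove that $F^2$ satisfies the KL property with exponent $\frac12$ at $\widehat x$.

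To analyze $F^2$, I would write it as the fraction $F^2=\frac{f}{g}$ with $f=\|\cdot\|_1^2+\delta_{A^{-1}\{b\}}$ and $g=\|\cdot\|^2$, and apply Theorem~\ref{4fractionalthm}. The hypotheses hold: $\inf f\ge 0$; $g$ is a nonnegative polynomial, hence continuously differentiable on all of $\R^n$; and $\inf_{{\rm dom}\,f}g={\rm dist}(0,A^{-1}\{b\})^2>0$ because $b\neq 0$ forces $0\notin A^{-1}\{b\}$. Case~(iii) of the theorem applies, since $h:=\|\cdot\|_1^2$ is convex (it is the composition of the nondecreasing convex map $t\mapsto t^2$ on $[0,\infty)$ with a norm), hence locally Lipschitz and regular everywhere, while $D:=A^{-1}\{b\}$ is an affine subspace and so regular everywhere. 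As $0\in\partial(F^2)(\widehat x)$, the point $\widehat x$ is stationary for $G=F^2$, so Theorem~\ref{4fractionalthm} reduces the task to showing that the associated function from \eqref{4Hu},
\[
H_{\widehat x}(x)=\|x\|_1^2-F(\widehat x)^2\|x\|^2+\delta_{A^{-1}\{b\}}(x),
\]
satisfies the KL property with exponent $\frac12$ at $\widehat x$ (note $H_{\widehat x}(\widehat x)=0$).

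The crux is the KL exponent of $H_{\widehat x}$. Writing $c=F(\widehat x)$, on each orthant $O$ of $\R^n$ the map $x\mapsto\|x\|_1$ is linear, so $\|x\|_1^2-c^2\|x\|^2$ is a quadratic polynomial on $O$, and $\delta_{A^{-1}\{b\}}$ merely restricts to the polyhedron $A^{-1}\{b\}\cap O$; hence $H_{\widehat x}$ is a (generally nonconvex) \emph{piecewise linear-quadratic} function. I would then invoke the fact that a proper closed piecewise linear-quadratic function satisfies the KL property with exponent $\frac12$. I expect this to be the main obstacle: because of the concave term $-c^2\|x\|^2$ the function $H_{\widehat x}$ is not convex, so the standard convex piecewise-linear-quadratic result does not apply directly, and one must instead argue piece by piece, combining a Luo--Tseng-type quadratic error bound on each of the finitely many pieces meeting at $\widehat x$ (quadratic functions satisfy the {\L}ojasiewicz gradient inequality with exponent $\frac12$) with a gluing argument across pieces in the spirit of the KL-exponent calculus of \cite{LiPong16,YuLiPong19,LiuPong19}. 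Once $H_{\widehat x}$ is known to have KL exponent $\frac12$ at $\widehat x$, Theorem~\ref{4fractionalthm} yields the same for $F^2$, and Proposition~\ref{chainrule} transfers it to $F$; since $\widehat x\in{\rm dom}\,\partial F$ was arbitrary, $F$ is a KL function with exponent $\frac12$.
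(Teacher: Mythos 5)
Your proposal follows essentially the same route as the paper's proof: reduce to stationary points via \cite[Lemma~2.1]{LiPong16}, pass from $F$ to $F^2$ via Proposition~\ref{chainrule}, apply Theorem~\ref{4fractionalthm} (case (iii), with $f=\|\cdot\|_1^2+\delta_{A^{-1}\{b\}}$ and $g=\|\cdot\|^2$) to reduce to the KL exponent of $H_{\widehat x}$, and then establish exponent $\frac12$ for this nonconvex piecewise linear-quadratic function. The ``main obstacle'' you flag at the last step is in fact already a citable result: writing $H_{\widehat x}(x)=\min_{\sigma\in\{\pm 1\}^n}\bigl\{(\langle\sigma,x\rangle)^2-F(\widehat x)^2\|x\|^2+\delta_{\{y:\,Ay=b,\,\sigma\circ y\ge 0\}}(x)\bigr\}$ exhibits it as a minimum of finitely many (possibly nonconvex) quadratics plus polyhedral indicators, so \cite[Corollary~5.2]{LiPong16} applies directly---this decomposition is exactly what the paper does, and it packages the per-piece error bound and gluing argument you were proposing to carry out by hand.
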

\begin{proof}
	In view of \cite[Lemma~2.1]{LiPong16}, it suffices to look at the KL exponent at a stationary point $\bar x$ of $F$.
	For any $\bar x$ satisfying $0 \in\partial F(\bar x)$, we have $F(\bar x) > 0$ since $b\neq 0$. Moreover, we have $0\in \partial (F^2)(\bar x)$ in view of Proposition~\ref{chainrule}(i). Next, note that the function
	\[
	 F_1(x) := \|x\|_1^2 - \frac{\|\bar x\|^2_1}{\|\bar x\|^2}\|x\|^2 + \delta_{A^{-1}\{b\}}(x)
	\]
	can be written as $\min_{\sigma \in {\frak R}}\{Q_\sigma(x) + P_\sigma(x)\}$, where ${\frak R} = \{u\in \R^n:\; u_i \in \{1,-1\}\ \ \forall i\}$, and $Q_\sigma$ are quadratic functions (nonconvex) and $P_\sigma$ are polyhedral functions indexed by $\sigma$: indeed, for each $\sigma\in {\frak R}$, one can define $P_\sigma$ as the indicator function of the set $\{x:\; Ax = b, \sigma\circ x\ge 0\}$, where $\circ$ denotes the Hadamard product, and $Q_\sigma(x) := (\langle\sigma,x\rangle)^2-\frac{\|\bar x\|^2_1}{\|\bar x\|^2}\|x\|^2$. Then, in view of \cite[Corollary~5.2]{LiPong16}, $F_1$ is a KL function with exponent $\frac12$. Since the convex function $\|\cdot\|_1^2$ is regular everywhere and the convex set $A^{-1}\{b\}$ is regular at every $x\in A^{-1}\{b\}$ (thanks to \cite[Theorem~6.9]{RockWets98}), we deduce using Theorem~\ref{4fractionalthm} that the function
	\[
	x\mapsto \frac{\|x\|^2_1}{\|x\|^2} + \delta_{A^{-1}\{b\}}(x)
	\]
	satisfies the KL property at $\bar x$ with exponent $\frac12$. The desired conclusion now follows from this and Proposition~\ref{chainrule}(ii).
\end{proof}

Equipped with the result above, by following the line of arguments in \cite[Theorem~2]{Attouch09}, one can conclude further that the sequence $\{x^t\}$ generated by Algorithm~\ref{alg:lou} converges locally linearly to a stationary point of $F$ in \eqref{Fnoiseless} if the sequence is bounded. The proof is standard and we omit it here for brevity.
\begin{theorem}[Convergence rate of Algorithm~\ref{alg:lou}]
	Consider \eqref{Pold}. Let $ \{x^t\} $ be the sequence generated by Algorithm~\ref{alg:lou} and suppose that $ \{x^t\}  $ is bounded. Then $ \{x^t\}  $ converges to a stationary point $ x^* $ of $F$ in \eqref{Fnoiseless} and there exist $ \underline{t} \in \mathbb{N}_+$, $ a_0\in(0,1) $ and $ a_1>0 $ such that
	\[
	\|x^t-x ^*\|\leq a_1a_0^t \quad \text{ whenever }\, t>\underline{t}.
	\]
\end{theorem}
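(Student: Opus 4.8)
The plan is to feed the results already assembled in this section into the standard Kurdyka--{\L}ojasiewicz convergence-rate machinery of Attouch and Bolte. Two of its three ingredients are already at hand from the Global Convergence Proposition: that proof verified conditions {\bf H1}, {\bf H2}, {\bf H3} of \cite[Section~2.3]{Attouch13} for $\{x^t\}$ and showed $x^t\to x^*$ for a stationary point $x^*$ of $F$. Concretely, since $Ax^t=b$ forces $\omega_t=F(x^t)$, the descent estimate \eqref{wdecrease_lou} is a sufficient-decrease inequality $F(x^t)-F(x^{t+1})\ge a\|x^{t+1}-x^t\|^2$ (here $a>0$ comes from bounding $\|x^{t+1}\|$ above using boundedness of $\{x^t\}$), which is {\bf H1}, while the displayed bound ${\rm dist}(0,\partial F(x^{t+1}))\le b\|x^{t+1}-x^t\|$ with $b:=(C_0+\alpha)/\inf_{y\in A^{-1}\{b\}}\|y\|$ is the relative-error condition {\bf H2}. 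The third and decisive ingredient is the theorem just proved: $F$ satisfies the KL property at $x^*$ with exponent $\tfrac12$.

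Writing $r_t:=F(x^t)-F(x^*)\ge 0$, I would first turn the KL inequality into a one-step contraction of the function-value gaps. For all $t$ beyond some $\underline{t}$ the iterate $x^t$ lies in the KL neighborhood $U$ of $x^*$ (by $x^t\to x^*$), which is exactly what produces the threshold in the statement; for such $t$, plugging $\varphi(\nu)=a_0\nu^{1/2}$ into \eqref{KL} at $x^{t+1}$ gives ${\rm dist}(0,\partial F(x^{t+1}))\ge \tfrac{2}{a_0}r_{t+1}^{1/2}$, hence $r_{t+1}^{1/2}\le \tfrac{a_0}{2}{\rm dist}(0,\partial F(x^{t+1}))\le \tfrac{a_0 b}{2}\|x^{t+1}-x^t\|$ by {\bf H2}, i.e.\ $r_{t+1}\le C\|x^{t+1}-x^t\|^2$ with $C:=(a_0 b/2)^2$. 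This is precisely the point where the exponent $\tfrac12$ is essential: it lets ${\rm dist}(0,\partial F)$ control $r^{1/2}$, so that a squared step length controls $r$ at matching order.

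Combining this with {\bf H1} gives $r_t-r_{t+1}\ge a\|x^{t+1}-x^t\|^2\ge (a/C)\,r_{t+1}$, hence $r_{t+1}\le \rho\, r_t$ with $\rho:=C/(C+a)\in(0,1)$, so the gaps decay geometrically, $r_t\le \rho^{\,t-\underline{t}}r_{\underline{t}}$ for $t\ge\underline{t}$. (If some $r_{t+1}=0$, monotonicity and {\bf H1} force the sequence to be eventually constant, giving finite convergence, a fortiori the claimed rate.) To pass to the iterates I would control the step lengths via $\|x^{j+1}-x^j\|\le a^{-1/2}(r_j-r_{j+1})^{1/2}\le a^{-1/2}r_j^{1/2}$ and sum the resulting geometric tail, using $\|x^t-x^*\|\le\sum_{j\ge t}\|x^{j+1}-x^j\|$ (telescoping and $x^j\to x^*$), to obtain $\|x^t-x^*\|\le a^{-1/2}r_{\underline{t}}^{1/2}(1-\rho^{1/2})^{-1}\rho^{(t-\underline{t})/2}$, which is the asserted bound with $a_0:=\rho^{1/2}\in(0,1)$ and a suitable $a_1>0$. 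Since each of these steps is routine once {\bf H1}, {\bf H2} and the exponent $\tfrac12$ are available, the whole argument can equivalently be subsumed under a direct appeal to \cite[Theorem~2]{Attouch09}; I do not expect a genuine obstacle here, as the real content was the KL-exponent computation carried out above. The only point needing care---and it is minor---is the identification $\omega_t=F(x^t)$ along the feasible sequence, which is what makes the $\omega$-descent recorded in \eqref{wdecrease_lou} an honest descent for the extended objective $F$.
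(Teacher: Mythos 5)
Your proposal is correct and takes essentially the same route as the paper: the paper's own "proof" consists precisely of combining the KL exponent $\tfrac12$ result with the descent and relative-error conditions ({\bf H1}--{\bf H3}) verified in the global-convergence proposition, and then appealing to the standard argument of \cite[Theorem~2]{Attouch09}, whose details it omits for brevity. Your write-up simply supplies those standard details (the identification $\omega_t = F(x^t)$ on the feasible set, the value-gap contraction via the exponent $\tfrac12$, the degenerate case $r_{t+1}=0$, and the geometric summation of step lengths), all of which are carried out correctly.
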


\section{Compressed sensing with noise based on $\ell_1/\ell_2$ minimization}\label{sec:noisyCS}
In the previous sections, we have been focusing on the model \eqref{Pold}, which corresponds to noiseless compressed sensing problems. In this section and the next, we will be looking at \eqref{P0}.
We will discuss conditions for existence of solutions and derive some first-order optimality conditions for \eqref{P0} in this section. An algorithm for solving \eqref{P0} will be proposed in the next section and will be shown to generate sequences that cluster at ``critical" points in the sense defined in this section, under suitable assumptions.

\subsection{Solution existence}
Clearly, if $q$ in \eqref{P0} is in addition level-bounded, then the feasible set is compact and hence the set of optimal solutions is nonempty. However, in applications such as \eqref{prob:LS}, \eqref{prob:Lorentzian} and \eqref{prob:rcs}, the corresponding $q$ is not level-bounded.
Here, we discuss solution existence for \eqref{prob:LS} and \eqref{prob:Lorentzian}. Our arguments are along the same line as those in section~\ref{sec3}.
We first present a lemma that establishes a relationship between the problems \eqref{prob:LS}, \eqref{prob:Lorentzian} and \eqref{limit_prob}.
\begin{lemma}\label{lem:minseqforP0}
	Consider \eqref{limit_prob} and \eqref{P0} with $ q $ given as in \eqref{prob:LS} or \eqref{prob:Lorentzian}. Then $ \nu_{ncs}^*=\nu^*_d$ if and only if there exists a minimizing sequence of \eqref{P0} that is unbounded.
\end{lemma}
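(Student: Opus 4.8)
The plan is to mirror the proof of Lemma~\ref{lem:minseq} closely, since the only structural change from \eqref{Pold} to \eqref{P0} is that the equality constraint $Ax=b$ is replaced by the inequality $q(x)\le 0$. The crucial observation that makes the argument go through is that, for both choices of $q$ in \eqref{prob:LS} and \eqref{prob:Lorentzian}, the function $q$ depends on $x$ only through $Ax-b$; consequently, for any $d\in\ker A$ and any $s\in\R$ we have $q(x+sd)=q(x)$. Thus the feasible set $\{x:\;q(x)\le 0\}$ is invariant under translation along directions in $\ker A$, which is exactly the role played by the affine set $A^{-1}\{b\}$ in the noiseless case. I would state this invariance, together with the boundedness of $\|Ax-b\|$ on the feasible set, as the two standing facts at the outset.

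For the ``only if'' direction, I would take an unbounded minimizing sequence $\{x^t\}$ of \eqref{P0} and, passing to a subsequence, assume $\|x^t\|\to\infty$ and $x^t/\|x^t\|\to x^*$ with $\|x^*\|=1$. The definition of minimizing sequence then gives $\|x^*\|_1=\nu^*_{ncs}$. To show $Ax^*=0$, I would use that $\|Ax-b\|$ is bounded on the feasible set: for \eqref{prob:LS} this is immediate from $\|Ax^t-b\|^2\le\sigma^2$, and for \eqref{prob:Lorentzian} it follows because each summand $\log(1+\gamma^{-2}(Ax^t-b)_i^2)$ is nonnegative and bounded above by $\sigma$, forcing $|(Ax^t-b)_i|\le\gamma\sqrt{e^{\sigma}-1}$. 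Dividing $Ax^t-b$ by $\|x^t\|\to\infty$ then yields $Ax^*=0$, whence $\nu^*_d\le\|x^*\|_1=\nu^*_{ncs}<\infty$. For the reverse inequality I would fix any feasible $x$ and any $d\neq 0$ with $Ad=0$; the translation invariance gives $q(x+sd)=q(x)\le 0$, so $x+sd$ is feasible for all $s$, and letting $s\to\infty$ in $\nu^*_{ncs}\le\frac{\|x+sd\|_1}{\|x+sd\|}$ yields $\nu^*_{ncs}\le\frac{\|d\|_1}{\|d\|}$. Arbitrariness of $d$ then gives $\nu^*_{ncs}\le\nu^*_d$, and hence equality.

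For the ``if'' direction, assuming $\nu^*_{ncs}=\nu^*_d$, I would proceed exactly as in Lemma~\ref{lem:minseq}: pick a sequence $\{d^k\}\subseteq\ker A\setminus\{0\}$ with $\frac{\|d^k\|_1}{\|d^k\|}\to\nu^*_d$, normalize, and pass to a limit $d^*$ with $\|d^*\|=1$, $Ad^*=0$ and $\|d^*\|_1=\nu^*_d$. Choosing any feasible $x^0$ (which exists since $\{x:\;q(x)\le 0\}\neq\emptyset$) and setting $x^t=x^0+td^*$, the translation invariance guarantees $q(x^t)=q(x^0)\le 0$, so each $x^t$ is feasible, $\|x^t\|\to\infty$, and $\frac{\|x^t\|_1}{\|x^t\|}\to\frac{\|d^*\|_1}{\|d^*\|}=\nu^*_d=\nu^*_{ncs}$, exhibiting the desired unbounded minimizing sequence.

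The main obstacle, and essentially the only place where the specific form of $q$ enters, is verifying the two facts tied to the constraint: that $\|Ax-b\|$ is uniformly bounded on the feasible set (needed for $Ax^*=0$ in the ``only if'' direction) and the translation invariance $q(x+sd)=q(x)$ along $d\in\ker A$ (used in both directions). Both reduce to the observation that $q$ is a function of $Ax-b$ alone; once these are in place, the remainder is a routine adaptation of Lemma~\ref{lem:minseq}.
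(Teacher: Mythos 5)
Your proposal is correct and follows essentially the same route as the paper: the paper's own proof simply declares the argument ``almost identical'' to Lemma~\ref{lem:minseq}, noting only that $Ax^*=0$ now follows from the uniform boundedness of $\|Ax^t-b\|$ on the feasible set rather than from $Ax^t=b$. Your write-up in fact supplies two details the paper leaves implicit---the translation invariance $q(x+sd)=q(x)$ for $d\in\ker A$ (needed to keep $x+sd$ and $x^0+td^*$ feasible in both directions) and the explicit bound $|(Ax^t-b)_i|\le\gamma\sqrt{e^{\sigma}-1}$ for the Lorentzian case---so it is a faithful, and slightly more complete, rendering of the intended proof.
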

The proof of this lemma is almost identical to that of Lemma~\ref{lem:minseq}. Here we omit the details and only point out a slight difference concerning the derivation of \eqref{Ad=0}. Take \eqref{prob:LS} as an example and let $\{x^t\}$ be an unbounded minimizing sequence of it with $\lim_{t\rightarrow\infty}\frac{x^t}{\|x^t\|} = x^*$ for some $x^*$ satisfying $\|x^*\|=1$. Then one can prove $ Ax^*=0 $ by using the facts that $ \|Ax^t-b\|\leq \sigma $ for all $ t $ and $\|x^t\|\to \infty$. Similar deductions can be done for \eqref{prob:Lorentzian}.

Using Lemma~\ref{lem:minseqforP0}, we can deduce solution existence based on the SSP of $ \ker A $ and the existence of a sparse feasible solution to \eqref{prob:LS} (or \eqref{prob:Lorentzian}). The corresponding arguments are the same as those in Theorem~\ref{thm:solexts_Pold} and we omit the proof for brevity.
\begin{theorem}[Solution existence for \eqref{prob:LS} and \eqref{prob:Lorentzian}]
	Consider \eqref{P0} with $ q $ given as in \eqref{prob:LS} or \eqref{prob:Lorentzian}. Suppose that $\ker A $ has the $ s $-spherical section property and there exists $ \widetilde x\in\R^n $ such that $ \|\widetilde x\|_0< m/s $ and $ q(\widetilde{x}) \leq 0$. Then the optimal value of \eqref{P0} is attainable.
\end{theorem}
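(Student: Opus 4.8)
The plan is to transplant the proof of Theorem~\ref{thm:solexts_Pold} almost verbatim, with Lemma~\ref{lem:minseqforP0} now playing the role that Lemma~\ref{lem:minseq} played there. First I would extract a lower bound on the auxiliary value $\nu^*_d$ from the structural hypothesis: since $\ker A$ has the $s$-spherical section property, the definition of $\nu^*_d$ in \eqref{limit_prob} gives directly $\nu^*_d \ge \sqrt{m/s}$. Next I would use the sparse feasible point $\widetilde x$ to bound the optimal value from above. Because $q(\widetilde x)\le 0 < q(0)$ we have $\widetilde x\neq 0$, so the objective is well defined at $\widetilde x$, and I would chain
\[
\nu^*_{ncs} \le \frac{\|\widetilde x\|_1}{\|\widetilde x\|} \le \sqrt{\|\widetilde x\|_0} < \sqrt{\frac{m}{s}} \le \nu^*_d,
\]
where the first inequality is feasibility of $\widetilde x$ for \eqref{P0}, the second is Cauchy--Schwarz, the third is the assumption $\|\widetilde x\|_0 < m/s$, and the last is the spherical-section bound. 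This produces the strict separation $\nu^*_{ncs} < \nu^*_d$; in particular $\nu^*_{ncs}\neq \nu^*_d$ and $\nu^*_{ncs} < \infty$.

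With the strict inequality in hand, I would invoke Lemma~\ref{lem:minseqforP0} in contrapositive form: since $\nu^*_{ncs}\neq\nu^*_d$, there is \emph{no} unbounded minimizing sequence of \eqref{P0}. As $\nu^*_{ncs}$ is finite, a minimizing sequence $\{x^t\}$ exists, and since none can be unbounded, every such sequence is bounded. I would then pass to a convergent subsequence $\{x^{t_j}\}$ with $\lim_{j\to\infty} x^{t_j} = x^*$. Feasibility of $x^*$, i.e.\ $q(x^*)\le 0$, follows from closedness of the feasible set $\{x:\; q(x)\le 0\}$, which holds because $q$ is continuous. The nonvanishing of the limit, $x^*\neq 0$, follows from the standing assumption $q(0)>0$, which makes $0$ infeasible so that no feasible point (in particular $x^*$) can be the origin. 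Finally, continuity of $\frac{\|\cdot\|_1}{\|\cdot\|}$ at the nonzero point $x^*$ yields $\frac{\|x^*\|_1}{\|x^*\|} = \lim_{j\to\infty}\frac{\|x^{t_j}\|_1}{\|x^{t_j}\|} = \nu^*_{ncs}$, so $x^*$ is an optimal solution.

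The only genuine point of care, and essentially the whole substance that distinguishes this from Theorem~\ref{thm:solexts_Pold}, is the argument that the limit $x^*$ is nonzero: in the noiseless setting this came for free from $Ax^* = b$ with $b\neq 0$, whereas here it must be routed through the hypothesis $q(0)>0$ (equivalently $\|b\|>\sigma$ for \eqref{prob:LS} and $\|b\|_{LL_2,\gamma}>\sigma$ for \eqref{prob:Lorentzian}). The rest of the work, namely the derivation of $Ax^*=0$ for the \emph{unbounded} case, has already been absorbed into Lemma~\ref{lem:minseqforP0} (as noted in the remark following its statement, one uses $\|Ax^t-b\|\le\sigma$ together with $\|x^t\|\to\infty$, and the analogous Lorentzian estimate, in place of $Ax^t=b$). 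Consequently no new obstacle arises at the level of the theorem itself, and the proof reduces to the four steps above.
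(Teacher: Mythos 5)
Your proposal is correct and follows exactly the route the paper intends: the paper omits this proof, stating that "the corresponding arguments are the same as those in Theorem~\ref{thm:solexts_Pold}" with Lemma~\ref{lem:minseqforP0} substituted for Lemma~\ref{lem:minseq}, which is precisely your transplant. Your added care about the limit point being nonzero (via the standing assumption $q(0)>0$, rather than $Ax^*=b\neq 0$) and the feasibility of the limit via continuity of $q$ correctly fills in the only details that differ from the noiseless case.
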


\subsection{Optimality conditions}
We discuss first-order necessary optimality conditions for local minimizers. Our analysis is based on the following standard constraint qualifications.
\begin{definition}[{Generalized Mangasarian-Fromovitz constraint qualifications}]
	Consider \eqref{P0}. We say that the general Mangasarian-Fromovitz constraint qualifications (GMFCQ) holds at an $x^*$ satisfying $q(x^*)\le 0$ if the following statement holds:
	\begin{itemize}
		\item If $q(x^*) = 0$, then $0\notin \partial^\circ q(x^*)$.
	\end{itemize}
\end{definition}
The GMFCQ reduces to the standard MFCQ when $ q $ is smooth. One can then see from sections 5.1 and 5.2 of \cite{YuPongLv20} that the GMFCQ holds at every $x$ feasible for \eqref{prob:LS} and \eqref{prob:Lorentzian} for all positive $\sigma$ and $\gamma$, because $A$ is surjective. We next study the GMFCQ for \eqref{prob:rcs}, in which $A$ is also surjective.

\begin{proposition}\label{GMFCQtoitem4}
The GMFCQ holds in the whole feasible set of \eqref{prob:rcs}.
\end{proposition}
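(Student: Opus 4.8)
The plan is to verify the defining condition of the GMFCQ pointwise over the feasible set. Fix any $x^*$ feasible for \eqref{prob:rcs}; if $q(x^*) < 0$ there is nothing to check, so suppose $q(x^*) = 0$ and set $y := Ax^* - b$, so that ${\rm dist}(y,S) = \sigma > 0$. By Lemma~\ref{lem:regularity2barq} together with the decomposition \eqref{q_rcs}, we have $\partial^\circ q(x^*) = \nabla P_1(x^*) - \partial P_2(x^*)$, where $\nabla P_1(x^*) = 2A^T(Ax^* - b) = 2A^Ty$. The goal is then to show $0 \notin \partial^\circ q(x^*)$, i.e., that $2A^Ty \notin \partial P_2(x^*)$.

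First I would compute $\partial P_2(x^*)$ explicitly. Writing $2\langle z, y\rangle - \|z\|^2 = \|y\|^2 - \|y - z\|^2$, the maximum defining $P_2$ in \eqref{q_rcs} is attained precisely on the projection set ${\rm Proj}_S(y)$. Since $P_2$ is a pointwise maximum of functions that are affine in $x$ -- the one indexed by $z$ having constant gradient $2A^Tz$ -- and the index set attaining the maximum is compact, the standard subdifferential formula for such maxima (Danskin's theorem) yields $\partial P_2(x^*) = 2A^T\,{\rm conv}({\rm Proj}_S(y))$.

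Next I would argue by contradiction. If $0 \in \partial^\circ q(x^*)$, then $2A^Ty = 2A^T\bar z$ for some $\bar z \in {\rm conv}({\rm Proj}_S(y))$, hence $A^T(y - \bar z) = 0$. Because $A$ has full row rank, $A^T$ is injective, which forces $y = \bar z$. Write $\bar z = \sum_i \lambda_i z_i$ with $z_i \in {\rm Proj}_S(y)$, $\lambda_i > 0$ and $\sum_i \lambda_i = 1$. The decisive observation is that each projection onto the sparse set $S$ merely copies the $r$ retained entries of $y$, so $(z_i)_j = y_j$ on the support of $z_i$ and therefore $\langle y, z_i\rangle = \|z_i\|^2$. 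Combined with $\|y - z_i\| = {\rm dist}(y,S) = \sigma$, this gives $\langle y, z_i\rangle = \|z_i\|^2 = \|y\|^2 - \sigma^2$ for every $i$. Taking the convex combination yields $\langle y, \bar z\rangle = \|y\|^2 - \sigma^2$, while $\bar z = y$ forces $\langle y, \bar z\rangle = \|y\|^2$; subtracting gives $\sigma = 0$, contradicting $\sigma > 0$.

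I expect the main obstacle to be the correct computation of $\partial P_2(x^*)$: one must identify the active set in the maximum \eqref{q_rcs} as ${\rm Proj}_S(y)$ and justify the convex-hull-of-gradients formula for this nonsmooth convex maximum taken over the nonconvex index set $S$. Once this is in place, the remainder is driven by the projection identity $\langle y, z\rangle = \|z\|^2$, which exploits the hard-thresholding structure of ${\rm Proj}_S$ and collapses the contradiction to $\sigma = 0$.
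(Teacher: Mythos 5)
Your proof is correct, and its core is identical to the paper's: both arguments hinge on the two projection identities $\langle y, z\rangle = \|z\|^2$ and $\|y\|^2 = \|z\|^2 + \sigma^2$ for $z \in {\rm Proj}_S(y)$, use surjectivity of $A$ (injectivity of $A^T$) to pass from an equation in $A^T$-images to an equation in $\R^m$, and collapse a convex combination of projections to the contradiction $\sigma^2 = 0$. Where you genuinely differ is the route to the Clarke subdifferential formula. The paper computes $\partial^\circ q(\bar x) = {\rm conv}\{2A^T(A\bar x - b - \xi):\ \xi \in {\rm Proj}_S(A\bar x - b)\}$ directly from variational-analysis results for squared distance functions (\cite[Example~8.53]{RockWets98}, \cite[Exercise~10.7]{RockWets98}, \cite[Theorem~8.49]{RockWets98}), whereas you exploit the DC decomposition \eqref{q_rcs} together with Lemma~\ref{lem:regularity2barq} to reduce everything to $\partial P_2(x^*)$, which you then compute by a max-rule argument; the two formulas coincide. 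Your route has the appeal of reusing infrastructure the paper already built and keeping the nonsmooth computation inside the \emph{convex} function $P_2$. (Also, you bypass Carath\'{e}odory's theorem, which is fine: elements of a convex hull are by definition finite convex combinations.) One caveat in your Danskin step: the standard convex-hull-of-gradients formula for a pointwise maximum requires a compact index set, and $S = \{z:\ \|z\|_0\le r\}$ is unbounded -- the obstruction is noncompactness, not the nonconvexity you flag. This is easily patched: any maximizer in the definition of $P_2(x)$ is a projection of $Ax - b$ onto $S$ and hence has norm at most $\|Ax - b\|$, so near $x^*$ one may replace $S$ by the compact set $S \cap B(0,R)$ for suitably large $R$ without changing $P_2$, after which the max-rule applies verbatim. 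With that localization stated, your proof is complete.
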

\begin{proof}
	It is straightforward to see that the GMFCQ holds for those $ x\in\{x:q(x)<0 \} $ . Then it remains to consider those $x$ satisfying $q(x) = 0$. Let $ q $ be as in \eqref{prob:rcs} and $ \bar x $ satisfy $ q(\bar x)=0 $.
	Notice that a $ \xi\in {\rm  Proj}_S(A\bar x-b) $ takes the following form:
	\[
	\xi_j = \begin{cases}
	[A\bar x-b]_j & {\rm if}\ j\in I^*, \\
	0 & \text{otherwise},
	\end{cases}
	\]
	where $I^*$ is an index set corresponding to the $r$-largest entries (in magnitude).
	Then for any $\xi\in {\rm  Proj}_S(A\bar x-b) $, we have
	\begin{equation}\label{haha}
		\begin{aligned}
			\langle A\bar x-b,\xi\rangle &= \|\xi\|^2,\\
			\|A\bar x-b\|^2 & = \|\xi\|^2 + \|A\bar x - b - \xi\|^2 \\
			&= \|\xi\|^2 + {\rm dist}^2(A\bar x - b,S) \overset{\rm (a)}= \|\xi\|^2 + \sigma^2,
		\end{aligned}
	\end{equation}
	where (a) holds because $0 = q(\bar x) = {\rm dist}^2(A\bar x-b,S) - \sigma^2$.
	Furthermore, since $A$ is surjective, we can deduce from \cite[Example~8.53]{RockWets98}, \cite[Exercise~10.7]{RockWets98} and \cite[Theorem~8.49]{RockWets98} that
	\[
	\partial^\circ q(\bar x) =  {\rm conv} \{2A^T(A\bar x-b-\xi): \xi \in {\rm Proj}_S(A\bar x-b) \}.
	\]
	
	Now, suppose to the contrary that $ 0\in\partial^\circ q(\bar x) $. Using Carath\'{e}odory's theorem, we see that there exist $ \lambda_i\geq 0 $ and $ \xi_i\in{\rm Proj}_S(A\bar x-b) $, $ i=1,\cdots, m+1 $ such that $ \sum_{i=1}^{m+1}\lambda_i = 1 $ and $ \sum_{i=1}^{m+1} \lambda_i A^T(A\bar x -b-\xi_i) = 0 $.
	Since $ A $ is surjective, we then have
	\[
	\sum_{i=1}^{m+1} \lambda_i (A\bar x -b-\xi_i) = 0.
	\]
	Multiplying both sides of the above equality by $(A\bar x - b)^T$, we obtain further that
	\[
	\begin{aligned}
	0& = \sum_{i=1}^{m+1} \lambda_i \langle A\bar x - b, A\bar x -b-\xi_i\rangle = \sum_{i=1}^{m+1} \lambda_i [\|A\bar x-b\|^2 - \langle A\bar x - b,\xi_i\rangle]\\
	& \overset{\rm (a)}= \sum_{i=1}^{m+1} \lambda_i [\|\xi_i\|^2+\sigma^2 - \|\xi_i\|^2]= \sigma^2 > 0,
	\end{aligned}
	\]
	where (a) follows from \eqref{haha} and the fact that $\xi_i\in {\rm Proj}_S(A\bar x-b)$ for each $i$, and the last equality holds because $\sum_{i=1}^{m+1}\lambda_i = 1$. This is a contradiction and thus we must have $ 0\notin\partial^\circ q(\bar x) $. This completes the proof.
\end{proof}

In the next definition, we consider some notions of criticality. The first one is the standard notion of stationarity while the second one involves the Clarke subdifferential.
\begin{definition}\label{def:stationary}
	Consider \eqref{P0}. We say that an $ \bar x\in\R^n $ satisfying $ q(\bar x)\leq 0 $ is
	\begin{enumerate}[{\rm (i)}]
		\item a stationary point of \eqref{P0} if
		\begin{equation}\label{stationary0}
			\textstyle 0\in \partial \left(\frac{\|\cdot\|_1}{\|\cdot\|}+\delta_{[q\leq 0]}(\cdot)\right)(\bar x);
		\end{equation}
		\item a Clarke critical point of \eqref{P0} if there exists $\bar \lambda \ge 0$ such that
		\begin{equation}\label{stationary1}
			\textstyle 0\in \partial \frac{\|\bar x\|_1}{\|\bar x\|} + \bar\lambda \partial^\circ q(\bar x)\ \ {\rm and}\ \ \bar\lambda q(\bar x) =0.
		\end{equation}
	\end{enumerate}
\end{definition}
As mentioned above, Definition~\ref{def:stationary}(i) is standard and it is known that every local minimizer of \eqref{P0} is a stationary point; see \cite[Theorem~10.1]{RockWets98}. We next study some relationships between these notions of criticality, and show in particular that every local minimizer is Clarke critical when the GMFCQ holds.
\begin{proposition}[Stationarity vs Clarke criticality]\label{liftedstationary}
Consider \eqref{P0} and let $\bar x$ be such that $q(\bar x)\le 0$. Then the following statements hold.
\begin{enumerate}[{\rm (i)}]
  \item If $\bar x$ is a stationary point of \eqref{P0} and the GMFCQ holds at $\bar x$, then $\bar x$ is a Clarke critical point.
  \item If $\bar x$ is a Clarke critical point of \eqref{P0} and $q$ is regular at $\bar x$, then $\bar x$ is stationary.
\end{enumerate}
\end{proposition}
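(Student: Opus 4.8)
The plan is to relate the limiting subdifferential of the extended objective $\frac{\|\cdot\|_1}{\|\cdot\|}+\delta_{[q\le 0]}$ to the Clarke-type condition in \eqref{stationary1}, using a sum rule together with a computation of the normal cone to the lower level set $[q\le 0]$. For part~(i), I would start from $0\in\partial\left(\frac{\|\cdot\|_1}{\|\cdot\|}+\delta_{[q\le 0]}\right)(\bar x)$. Since $\bar x$ is feasible and $b\neq 0$ forces $\bar x\neq 0$, the fraction $\frac{\|\cdot\|_1}{\|\cdot\|}$ is locally Lipschitz (indeed regular, by \eqref{subdatbarx}) near $\bar x$, so a sum rule of the form \cite[Corollary~10.9]{RockWets98} applies and gives
\[
0\in \partial\frac{\|\bar x\|_1}{\|\bar x\|} + N_{[q\le 0]}(\bar x).
\]
The crux is then to bound $N_{[q\le 0]}(\bar x)$ by $\{\bar\lambda\,\xi:\ \bar\lambda\ge 0,\ \xi\in\partial^\circ q(\bar x)\}$ when the GMFCQ holds.

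The main obstacle is exactly this normal-cone estimate, because $q=P_1-P_2$ is only locally Lipschitz (possibly nonsmooth), so I cannot invoke the smooth level-set formula directly; I must work with the Clarke subdifferential. The clean route is to split on whether the constraint is active. If $q(\bar x)<0$, then $\bar x$ lies in the interior of $[q\le 0]$ (by continuity of $q$), so $N_{[q\le 0]}(\bar x)=\{0\}$, and the optimality condition reduces to $0\in\partial\frac{\|\bar x\|_1}{\|\bar x\|}$; I then take $\bar\lambda=0$, and $\bar\lambda\,q(\bar x)=0$ holds trivially. If $q(\bar x)=0$, then GMFCQ gives $0\notin\partial^\circ q(\bar x)$, and under this qualification the standard nonsmooth-analysis estimate for level sets (e.g.\ \cite[Theorem~6.14]{RockWets98} for the limiting case, or a Clarke-calculus argument via \cite{Clarke90}) yields
\[
N_{[q\le 0]}(\bar x)\subseteq \{\bar\lambda\,\xi:\ \bar\lambda\ge 0,\ \xi\in\partial^\circ q(\bar x)\};
\]
combining with the displayed inclusion produces $\bar\lambda\ge 0$ and $\xi\in\partial^\circ q(\bar x)$ with $0\in\partial\frac{\|\bar x\|_1}{\|\bar x\|}+\bar\lambda\xi$, and $\bar\lambda\,q(\bar x)=0$ holds since $q(\bar x)=0$. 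This establishes~\eqref{stationary1}.

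For part~(ii), I would run the implication in reverse, now using regularity of $q$ at $\bar x$ to upgrade the Clarke condition to a limiting (and hence stationarity) statement. Assume \eqref{stationary1}: there is $\bar\lambda\ge 0$ with $0\in\partial\frac{\|\bar x\|_1}{\|\bar x\|}+\bar\lambda\,\partial^\circ q(\bar x)$ and $\bar\lambda\,q(\bar x)=0$. When $\bar\lambda=0$ the constraint is inactive in the relevant sense and $0\in\partial\frac{\|\bar x\|_1}{\|\bar x\|}\subseteq\partial\left(\frac{\|\cdot\|_1}{\|\cdot\|}+\delta_{[q\le 0]}\right)(\bar x)$ follows since adding the indicator only enlarges the normal cone. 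When $\bar\lambda>0$, the complementarity forces $q(\bar x)=0$, and regularity of $q$ at $\bar x$ means $\partial q(\bar x)=\partial^\circ q(\bar x)$, so the Clarke multiplier rule becomes a limiting one; the regular level-set normal-cone inclusion $\{\bar\lambda\,\xi:\ \xi\in\partial q(\bar x)\}\subseteq N_{[q\le 0]}(\bar x)$ (valid under regularity, with GMFCQ-type nondegeneracy implicit in $\bar\lambda>0$) then gives $\bar\lambda\,\partial^\circ q(\bar x)\subseteq N_{[q\le 0]}(\bar x)$. Applying the sum rule in the reverse direction (again justified by regularity of both $\frac{\|\cdot\|_1}{\|\cdot\|}$ and the set $[q\le 0]$ at $\bar x$) recovers \eqref{stationary0}. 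The delicate point throughout is ensuring the normal-cone/level-set calculus is applied on the correct side of the regularity hypothesis: for part~(i) I only need the outer (upper) estimate that holds under GMFCQ, whereas for part~(ii) the inner (lower) estimate requires regularity of $q$, which is precisely the hypothesis supplied.
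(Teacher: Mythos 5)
Your part (i) is correct and is essentially the paper's argument: split on whether the constraint is active; in the inactive case take $\bar\lambda=0$; in the active case pass from \eqref{stationary0} to $0\in\partial\frac{\|\bar x\|_1}{\|\bar x\|}+N_{[q\le 0]}(\bar x)$ by the Lipschitz-plus-lsc sum rule, and then bound $N_{[q\le 0]}(\bar x)$ by the cone generated by $\partial^\circ q(\bar x)$ using a Clarke level-set estimate together with $0\notin\partial^\circ q(\bar x)$ (which also makes that cone closed, so no closure operation is needed). One quibble: your parenthetical citation of \cite[Theorem~6.14]{RockWets98} is inapplicable, since that result requires smooth constraint data; the Clarke-calculus alternative you mention is the correct route and is exactly what the paper uses, via \cite[Theorem~5.2.22]{BorZhu04} and the first corollary to \cite[Theorem~2.4.7]{Clarke90}.

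Part (ii), however, contains a genuine gap as written. First, the step ``adding the indicator only enlarges the normal cone,'' i.e.\ $\partial f(\bar x)\subseteq\partial(f+\delta_C)(\bar x)$ for the \emph{limiting} subdifferential, is false in general: with $f(x)=-|x|$, $C=[0,\infty)$ and $\bar x=0$, one has $\partial f(0)=\{-1,1\}$ while $\partial(f+\delta_C)(0)=(-\infty,-1]$. Second, in the case $\bar\lambda>0$ you appeal to regularity of the \emph{set} $[q\le 0]$ and to a ``GMFCQ-type nondegeneracy implicit in $\bar\lambda>0$''; neither is available: part (ii) assumes no constraint qualification, regularity of the function $q$ does not by itself imply regularity of its sublevel set, and $\bar\lambda>0$ carries no nondegeneracy information about $\partial^\circ q(\bar x)$. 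Both defects are repaired by the paper's device of routing everything through regular (Fr\'{e}chet) objects, for which the reverse sum rule is unconditional: by \eqref{subdatbarx} and \cite[Corollary~8.11]{RockWets98}, $\partial\frac{\|\bar x\|_1}{\|\bar x\|}=\widehat\partial\frac{\|\bar x\|_1}{\|\bar x\|}$; regularity of $q$ gives $\widehat\partial q(\bar x)=\partial q(\bar x)=\partial^\circ q(\bar x)$, whence \cite[Proposition~10.3]{RockWets98} yields $\widehat N_{[q\le 0]}(\bar x)\supseteq\bigcup_{\lambda\ge 0}\lambda\,\partial^\circ q(\bar x)$ without any qualification; and the always-valid inclusions $\widehat\partial\frac{\|\bar x\|_1}{\|\bar x\|}+\widehat N_{[q\le 0]}(\bar x)\subseteq\widehat\partial\bigl(\frac{\|\cdot\|_1}{\|\cdot\|}+\delta_{[q\le 0]}\bigr)(\bar x)\subseteq\partial\bigl(\frac{\|\cdot\|_1}{\|\cdot\|}+\delta_{[q\le 0]}\bigr)(\bar x)$ then finish both cases (for $\bar\lambda=0$ simply use $0\in\widehat N_{[q\le 0]}(\bar x)$). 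So your overall plan coincides with the paper's, but the limiting-versus-regular distinction you gloss over is precisely where the hypotheses of (ii) are consumed.
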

\begin{remark}
  Since local minimizers of \eqref{P0} are stationary points, we see from Proposition~\ref{liftedstationary}(i) that when the GMFCQ holds in the whole feasible set, local minimizers are also Clarke critical.
\end{remark}
\begin{proof}
   Suppose that $\bar x$ is a stationary point of \eqref{P0} at which the GMFCQ holds. Then \eqref{stationary0} holds and we consider two cases.
	
	\noindent {\bf Case 1}: $ q(\bar x) <0$. Since $q$ is continuous, \eqref{stationary0} implies $0\in \partial \frac{\|\bar x\|_1}{\|\bar x\|}$ and hence \eqref{stationary1} holds with $\bar\lambda = 0$. Thus, $ \bar x $ is a Clarke critical point. 	
	
	\noindent {\bf Case 2}: $q(\bar x) = 0$. Since the GMFCQ holds for \eqref{P0} at $ \bar x $, we see that $0\notin \partial^\circ q(\bar x)$.
	Then we can deduce from \eqref{stationary0} and \cite[Exercise~10.10]{RockWets98} that
\[
0 \in \partial \frac{\|\bar x\|_1}{\|\bar x\|} + N_{[q\le 0]}(\bar x) \overset{\rm (a)}\subseteq  \partial \frac{\|\bar x\|_1}{\|\bar x\|} + \bigcup_{\lambda\ge 0}\lambda\partial^\circ q(\bar x),
\]
where (a) follows from \cite[Theorem~5.2.22]{BorZhu04}, the first corollary to \cite[Theorem~2.4.7]{Clarke90} and the fact that $0\notin \partial^\circ q(\bar x)$. Thus, \eqref{stationary1} holds with some $\bar\lambda \ge 0$ (recall that $q(\bar x) = 0$), showing that  $ \bar x $ is a Clarke critical point. This proves item (i).

We now prove item (ii). Suppose that $\bar x$ is a Clarke critical point and that $q$ is regular at $\bar x$. Then there exists $\bar \lambda\ge 0$ so that \eqref{stationary1} holds.
We again consider two cases.
	
	\noindent {\bf Case 1:} $\bar \lambda = 0 $. In this case, we see from \eqref{stationary1} that
	$0 \in \partial \frac{\|\bar x\|_1}{\|\bar x\|}$, which implies
\[
\begin{aligned}
0 &\in \partial \frac{\|\bar x\|_1}{\|\bar x\|}\overset{\rm (a)}= \widehat\partial \frac{\|\bar x\|_1}{\|\bar x\|} \subseteq \widehat\partial \frac{\|\bar x\|_1}{\|\bar x\|} + \widehat N_{[q\le 0]}(\bar x) \\
&\overset{\rm (b)}\subseteq \widehat\partial\left(\frac{\|\cdot\|_1}{\|\cdot\|} + \delta_{[q\le 0]}(\cdot)\right)(\bar x)\overset{\rm (c)}\subseteq \partial\left(\frac{\|\cdot\|_1}{\|\cdot\|} + \delta_{[q\le 0]}(\cdot)\right)(\bar x),
\end{aligned}
\]
where (a) follows from \eqref{subdatbarx} and \cite[Corollary~8.11]{RockWets98}, (b) holds thanks to \cite[Corollary~10.9]{RockWets98}, and (c) follows from \cite[Theorem~8.6]{RockWets98}.
Thus, $ \bar x $ is a stationary point.
	
	\noindent {\bf Case 2:} $ \bar \lambda >0 $. In this case, we have from \eqref{stationary1} that $ q(\bar x) = 0 $. Since $ q $ is regular at $\bar x$, we see from \cite[Corollary~8.11]{RockWets98} and the discussion right after \cite[Theorem~8.49]{RockWets98} that
	\begin{equation}\label{eq7}
		\widehat \partial q(\bar x) = \partial q(\bar x) = \partial^\circ q(\bar x).
	\end{equation}
	Now, in view of \eqref{eq7}, $q(\bar x) = 0$ and \cite[Proposition~10.3]{RockWets98}, we have
	\begin{equation}\label{hahahaha}
		\widehat N_{[q\le 0]}(\bar x) \supseteq \bigcup_{\lambda\geq 0}\lambda \widehat \partial q(\bar x) = \bigcup_{\lambda\geq 0}\lambda \partial^\circ q(\bar x).
	\end{equation}
	We then deduce that
	\begin{equation*}
		\partial\left(\frac{\|\cdot\|_1}{\|\cdot\|}+\delta_{[q\leq 0]}(\cdot)\right) (\bar x) \overset{\rm (a)}\supseteq \widehat\partial \frac{\|\bar x\|_1}{\|\bar x\|}+\widehat N_{[q\leq 0]}(\bar x) \overset{\rm (b)}\supseteq \partial \frac{\|\bar x\|_1}{\|\bar x\|}+\bigcup_{\lambda\geq 0}\lambda \partial^\circ q(\bar x),
	\end{equation*}
	where (a) follows from \cite[Theorem~8.6]{RockWets98} and \cite[Corollary 10.9]{RockWets98}, and (b) follows from \eqref{hahahaha}, \eqref{subdatbarx} and \cite[Corollary~8.11]{RockWets98}.
	This together with the definition of Clarke criticality shows that \eqref{stationary0} holds.
	This completes the proof.
\end{proof}

\section{A moving-balls-approximation based algorithm for solving \eqref{P0}}\label{sec6}

In this section, we propose and analyze an algorithm for solving \eqref{P0}, which is an extension of Algorithm~\ref{alg:lou} by incorporating {\em moving-balls-approximation} (MBA) techniques \cite{AusSheTeb10}. Our algorithm, which we call MBA$_{\ell_1/\ell_2}$, is presented as Algorithm~\ref{alg1} below.
\begin{algorithm}
	\caption{MBA$_{\ell_1/\ell_2}$: Moving-balls-approximation based algorithm for \eqref{P0}}\label{alg1}
	\begin{algorithmic}
		\STATE {\bf Step 0.} Choose $x^0$ with $q(x^0)\leq 0 $, $ \alpha>0 $ and $0 < l_{\min} < l_{\max}$. Set $\omega_0 = \|x^0\|_1/\|x^0\|$ and $t=0$.
		
		{\bf Step 1.} Choose $ l^0_t\in[l_{\min},l_{\max}] $ arbitrarily and set $ l_t= l^0_t $. Choose $ \zeta^t\in\partial P_2(x^t) $.
		\begin{enumerate}[({1}a)]
			\item Solve the subproblem
			\begin{equation}\label{subp}
				\begin{array}{rl}
					\widetilde{x}= \displaystyle\argmin_{x\in \R^{n}} & \displaystyle \|x\|_1 - \frac{\omega_t}{\|x^t\|}\langle x,x^t\rangle + \frac{\alpha}2\|x - x^t\|^2\\
					{\rm s.t.}\quad& \displaystyle q(x^t)+ \langle \nabla P_1(x^t)-\zeta^t,x - x^t\rangle + \frac{l_t}{2}\|x - x^t\|^2 \le 0.
				\end{array}
			\end{equation}
			\item If $ q(\widetilde{x})\leq 0 $, go to {\bf Step 2}. Else, update $ l_t\leftarrow 2l_t $ and go to Step (1a).
		\end{enumerate}
		
		{\bf Step 2.} Set $x^{t+1} = \widetilde x$ and compute $\omega_{t+1} = \|x^{t+1}\|_1/\|x^{t+1}\|$. Set $ \bar l_t := l_t $. Update $t\leftarrow t+1$ and go to {\bf Step 1.}
	\end{algorithmic}
\end{algorithm}
Unlike previous works \cite{BolChenPau18,BolPau16,YuPongLv20} that made use of MBA techniques, our algorithm deals with a \emph{fractional} objective and a possibly \emph{nonsmooth} continuous constraint function. Thus, the convergence results in \cite{BolChenPau18,BolPau16,YuPongLv20} cannot be directly applied to analyze our algorithm. Indeed, as we shall see later in section~\ref{sec62}, we need to introduce a new potential function for our analysis to deal with the possibly nonsmooth $q$ in the constraint.

We will show that Algorithm~\ref{alg1} is well defined later, i.e., for each $t\in \mathbb{N}_+$, the subproblem \eqref{subp} has a unique solution for every $l_t$ and the inner loop in {\bf Step 1} terminates finitely. Here, it is worth noting that \eqref{subp} can be efficiently solved using a root-finding procedure outlined in \cite[Appendix~A]{YuPongLv20} since \eqref{subp} takes the form of
\[
\min_x\; \|x\|_1 + \frac{\alpha}2\|x - c^t\|^2\quad \text{s.t.}\; \|x - s^t\|^2\le R_t
\]
for some $ c^t\in\R^n $, $ s^t\in\R^n $ and $ R_t\ge 0 $.

\subsection{Convergence analysis}
In this subsection, we establish subsequential convergence of MBA$_{\ell_1/\ell_2}$ under suitable assumptions. We start with the following auxiliary lemma that concerns well-definedness and sufficient descent. The proof of the sufficient descent property in item (iii) below is essentially the same as \cite[Lemma~1]{WangYanLou19}. We include it here for completeness.
\begin{lemma}[Well-definedness and sufficient descent]\label{welldefine}
	Consider \eqref{P0}. Then the following statements hold:
	\begin{enumerate}[{\rm (i)}]
		\item MBA$_{\ell_1/\ell_2}$ is well defined, i.e., for each $t\in \mathbb{N}_+$, the subproblem \eqref{subp} has a unique solution for every $l_t$ and the inner loop in {\bf Step 1} terminates finitely.
        \item The sequence $\{\bar l_t\}$ is bounded.
		\item Let $\{(x^t,\omega_t)\}$ be the sequence generated by MBA$_{\ell_1/\ell_2}$. Then there exists $ \delta>0 $ such that $ \|x^t\|\geq \delta $ for every $ t\in\mathbb{N}_+ $, and the sequence $\left\{\omega_t\right\}$ satisfies
		\begin{equation}\label{suff_decrs}
			\omega_t - \omega_{t+1}\geq \frac{\alpha}{2\|x^{t+1}\|}\|x^t-x^{t+1}\|^2, \quad   t\in\mathbb{N}_+.
		\end{equation}
	\end{enumerate}
\end{lemma}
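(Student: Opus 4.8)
The plan is to treat the three items in turn, with the finite termination in (i) being the delicate step. For the existence and uniqueness of the minimizer of \eqref{subp}, I would note that its objective is strongly convex (the penalty $\frac{\alpha}{2}\|x-x^t\|^2$ with $\alpha>0$ is strongly convex and the remaining terms are convex) and that its feasible set is closed and convex, since the constraint function in \eqref{subp} is a convex quadratic with Hessian $l_t I\succ 0$. The key point is that this feasible set is nonempty: substituting $x=x^t$ makes the left-hand side of the constraint equal to $q(x^t)$, which is $\le 0$ by an easy induction (the initialization gives $q(x^0)\le 0$ and Step~2 accepts only $\widetilde x$ with $q(\widetilde x)\le 0$). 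Minimizing a strongly convex function over a nonempty closed convex set then yields a unique minimizer, for every $l_t$.

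For the finite termination of the inner loop, I would invoke the majorization property underlying MBA. Writing $L$ for a Lipschitz constant of $\nabla P_1$, the descent lemma gives $P_1(x)\le P_1(x^t)+\langle\nabla P_1(x^t),x-x^t\rangle+\frac{L}{2}\|x-x^t\|^2$, while convexity of $P_2$ and the choice $\zeta^t\in\partial P_2(x^t)$ give $P_2(x)\ge P_2(x^t)+\langle\zeta^t,x-x^t\rangle$. Subtracting, one sees that whenever $l_t\ge L$ the constraint function in \eqref{subp} dominates $q$, so any feasible point of \eqref{subp}---in particular the computed $\widetilde x$---satisfies $q(\widetilde x)\le 0$ and the test in Step~(1b) passes. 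Since $l_t$ starts in $[l_{\min},l_{\max}]$ and is doubled after each failed test, the test passes after finitely many doublings, proving (i); and because a doubling overshoots $L$ by at most a factor of two, the accepted value obeys $\bar l_t\le\max\{l_{\max},2L\}$ for all $t$, giving the boundedness claimed in (ii).

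For (iii), the uniform lower bound on $\|x^t\|$ is a consequence of the standing assumption $q(0)>0$ together with continuity of $q$: there is $\delta>0$ with $q(x)>0$ whenever $\|x\|<\delta$, so any feasible point---and in particular each $x^t$, which satisfies $q(x^t)\le 0$---has $\|x^t\|\ge\delta$; in particular $x^t\ne 0$ and $\omega_t$ is well defined. The sufficient descent inequality then mirrors \cite[Lemma~1]{WangYanLou19}. Since $x^t$ is feasible for \eqref{subp} while $x^{t+1}$ minimizes its objective, comparing objective values at $x^{t+1}$ and $x^t$---and using that the value at $x^t$ equals $\|x^t\|_1-\omega_t\|x^t\|=0$ by the definition of $\omega_t$---yields $\|x^{t+1}\|_1-\frac{\omega_t}{\|x^t\|}\langle x^{t+1},x^t\rangle+\frac{\alpha}{2}\|x^{t+1}-x^t\|^2\le 0$. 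Bounding $\langle x^{t+1},x^t\rangle\le\|x^{t+1}\|\,\|x^t\|$ by Cauchy--Schwarz and dividing by $\|x^{t+1}\|>0$ then gives \eqref{suff_decrs}, upon recalling $\omega_{t+1}=\|x^{t+1}\|_1/\|x^{t+1}\|$.

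The only genuinely nontrivial step is the finite-termination argument behind (i) and (ii): it is the one place where the structure $q=P_1-P_2$, with $P_1$ smooth with Lipschitz gradient and $P_2$ convex, is really exploited---through the descent lemma and the subgradient inequality---to ensure that the linearized-plus-quadratic constraint eventually majorizes the possibly nonsmooth $q$. The remaining computations are routine convex analysis together with the direct use of the optimality of $x^{t+1}$ for \eqref{subp}.
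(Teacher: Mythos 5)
Your proof is correct and follows essentially the same route as the paper: feasibility of $x^t$ for \eqref{subp} (via induction on $q(x^t)\le 0$) plus strong convexity for (i), the descent lemma combined with the subgradient inequality for $P_2$ to show the test passes once $l_t$ reaches the Lipschitz constant of $\nabla P_1$ for finite termination and (ii), and the comparison of objective values at $x^{t+1}$ and $x^t$ with Cauchy--Schwarz for (iii). The only (cosmetic) difference is your bound $\bar l_t\le\max\{l_{\max},2L\}$, which is slightly sharper than the paper's $\bar l_t\le 2^{k_0}l_{\max}$ with $2^{k_0}l_{\min}\ge L_p$.
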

\begin{proof}
	Suppose that an $x^t$ satisfying $q(x^t)\le 0$ is given for some $t\in \mathbb{N}_+$. Then $x^t\neq 0$ since $q(0) > 0$. Moreover, for any $l_t> 0$, $ x^t $ is feasible for \eqref{subp} and the feasible set is thus nonempty. Since \eqref{subp} minimizes a strongly convex continuous function over a nonempty closed convex set, it has a unique optimal solution, i.e., $\widetilde x$ exists.
	
	Let $L_p$ be the Lipschitz continuity modulus of $\nabla P_1$. Then we have
	\begin{equation}\label{foritem3}
		\begin{aligned}
			&\textstyle q(\widetilde x) =P_1(\widetilde{x}) - P_2(\widetilde x)\le P_1(x^t)  + \left\langle \nabla P_1(x^t), \widetilde x-x^t\right\rangle + \frac{L_p}2\|\widetilde x-x^t\|^2 -P_2(\widetilde x)\\
            &\textstyle \!\overset{\rm (a)}\leq\! P_1(x^t) - P_2(x^t) +\left\langle \nabla P_1(x^t)-\zeta^t, \widetilde x-x^t\right\rangle + \frac{L_p}2\|\widetilde x-x^t\|^2 \!\overset{\rm (b)}\le\! \frac{L_p - l_t}{2}\|\widetilde x - x^t\|^2,
		\end{aligned}
	\end{equation}
	where (a) holds because of the convexity of $ P_2 $ and the definition of $ \zeta^t $, and (b) follows from the feasibility of $\widetilde x$ for \eqref{subp}. Let $k_{0}\in \mathbb{N}_+$ be such that $L_p - 2^{k_{0}}l_{\min}\le 0$. Then by \eqref{foritem3} and the definition of $l_t$ we see that $q(\widetilde x)\le 0$ after at most $k_{0}$ calls of Step (1b). Moreover, it holds that $\bar l_t \le 2^{k_0}l_{\max}$.
	Therefore, if $q(x^t)\le 0$, then the inner loop of Step 1 stops after at most $k_{0}$ iterations and outputs an $x^{t+1 } $ satisfying $q(x^{t+1})\le 0$ (in particular, $x^{t+1}\neq 0$) with $\bar l_t \le 2^{k_0}l_{\max}$. Since we initialize our algorithm at an $x^0$ satisfying $q(x^0)\le 0 $, the conclusions in items (i) and (ii) now follow from an induction argument. 
	
	Next, we prove item (iii). Since $ q(0)>0 $, we see immediately from the continuity of $ q $ that there exists some $ \delta>0 $ such that $ \|x\|\geq \delta $ whenever $q(x)\le 0$. Thus, $\|x^t\|\ge \delta$ for all $t\in \mathbb{N}_+$, thanks to $q(x^t)\le 0$. Now consider \eqref{subp} with $ l_t = \bar l_t $. Then $ x^t $ is feasible and $ x^{t+1} $ is optimal. This together with the definition of $ \omega_t $ yields
	\[
	\|x^{t+1}\|_1-\frac{\|x^t\|_1}{\|x^t\|^2}\left\langle x^{t+1},x^t\right\rangle+\frac\alpha 2\|x^{t+1}-x^t\|^2\leq \|x^{t}\|_1-\frac{\|x^t\|_1}{\|x^t\|^2}\left\langle x^{t},x^t\right\rangle+\frac\alpha 2\|x^{t}-x^t\|^2 = 0.
	\]
	Dividing both sides of the above inequality by $ \|x^{t+1}\| $ and rearranging terms, we have
	\[
	\frac{\|x^{t+1}\|_1}{\|x^{t+1}\|} + \frac{\alpha}{2\|x^{t+1}\|}\|x^t-x^{t+1}\|^2\leq \frac{\|x^t\|_1}{\|x^t\|^2} \frac{\left\langle x^{t+1}, x^t\right\rangle}{\|x^{t+1}\|}\leq \frac{\|x^t\|_1}{\|x^t\|^2} \frac{\| x^{t+1}\|\|x^t\|}{\|x^{t+1}\|} =  \frac{\|x^t\|_1}{\|x^t\|}.
	\]
	This proves (iii) and completes the proof.
\end{proof}

We next introduce the following assumption.
\begin{assumption}\label{asmp:gmfcq}
	The GMFCQ for \eqref{P0} holds at every point in $[q\le 0]$.
\end{assumption}
Recall from Proposition~\ref{GMFCQtoitem4} and the discussions preceding it that Assumption~\ref{asmp:gmfcq} holds for \eqref{prob:LS}, \eqref{prob:Lorentzian} and \eqref{prob:rcs} since $A$ is surjective.
We next derive the Karush-Kuhn-Tucker (KKT) conditions for \eqref{subp} at every iteration $ t $ under Assumption~\ref{asmp:gmfcq}, which will be used in our subsequent analysis.

\begin{lemma}[KKT conditions for \eqref{subp}]\label{slater-kkt}
	Consider \eqref{P0} and suppose that Assumption~\ref{asmp:gmfcq} holds. Let $\{x^t\}$ be the sequence generated by MBA$_{\ell_1/\ell_2}$. Then the following statements hold:
	\begin{enumerate}[{\rm (i)}]
		\item The Slater's condition holds for the constraint of \eqref{subp} at each $t\in \mathbb{N}_+$.
		\item For each $t\in \mathbb{N}_+$, $ \zeta^t\in\partial P_2(x^t) $ and $l_t>0$, the subproblem \eqref{subp} has a Lagrange multiplier \add{$\lambda_t\geq 0$}. Moreover, if $\widetilde x$ is as in \eqref{subp}, then it holds that
		\begin{align}
			&\lambda_t\left( q(x^t) + \left\langle \nabla P_1(x^t)-\zeta^t, \widetilde x-x^t\right\rangle + \frac{l_t}2\|\widetilde{x} - x^t\|^2\right) = 0, \label{complementary} \\
			&0\in \partial\left\|
			\widetilde x\right\|_1 -\frac{\omega_tx^t}{\|x^t\|}+\lambda_t(\nabla P_1(x^t)-\zeta^t)+(\alpha+\lambda_tl_t)(\widetilde{x}-x^t). \label{firstorder}
		\end{align}
	\end{enumerate}
\end{lemma}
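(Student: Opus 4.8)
The plan is to exploit the fact that subproblem~\eqref{subp} is a \emph{convex} program: its objective is strongly convex and continuous, and since $l_t>0$ the constraint function $c_t(x):=q(x^t)+\langle\nabla P_1(x^t)-\zeta^t,x-x^t\rangle+\frac{l_t}2\|x-x^t\|^2$ is a convex quadratic. Hence, once Slater's condition is established, the existence of a nonnegative multiplier together with the KKT relations \eqref{complementary} and \eqref{firstorder} will follow from standard convex duality. I would therefore prove item~(i) first and deduce item~(ii) from it.

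For item~(i), I would split into two cases according to the sign of $q(x^t)$; recall from Lemma~\ref{welldefine} that $q(x^t)\le 0$ along the generated sequence. If $q(x^t)<0$, then $x=x^t$ already gives $c_t(x^t)=q(x^t)<0$, so Slater's condition holds trivially. The delicate case is $q(x^t)=0$. Here I would seek a strictly feasible point of the form $x^t+sd$ with $s>0$ small, for which $c_t(x^t+sd)=s\langle\nabla P_1(x^t)-\zeta^t,d\rangle+\frac{l_t}2 s^2\|d\|^2$; the idea is to choose $d$ making the linear term strictly negative, which is possible precisely when $\nabla P_1(x^t)-\zeta^t\neq 0$. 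Taking $d=-(\nabla P_1(x^t)-\zeta^t)$ and then $s>0$ sufficiently small forces $c_t(x^t+sd)<0$.

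The crux, and the only genuinely nontrivial step, is verifying $\nabla P_1(x^t)-\zeta^t\neq 0$ in this case, and this is where the GMFCQ enters. Since $\zeta^t\in\partial P_2(x^t)$, Lemma~\ref{lem:regularity2barq} gives $\nabla P_1(x^t)-\zeta^t\in\nabla P_1(x^t)-\partial P_2(x^t)=\partial^\circ q(x^t)$. On the other hand, because $x^t\in[q\le 0]$ with $q(x^t)=0$, Assumption~\ref{asmp:gmfcq} yields $0\notin\partial^\circ q(x^t)$, whence $\nabla P_1(x^t)-\zeta^t\neq 0$, as required. This is the step I expect to require the most care, since it is exactly the bridge between the constraint qualification on the original (possibly nonsmooth) $q$ and the linearized subproblem constraint.

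With Slater's condition in hand, item~(ii) follows routinely. Convexity of \eqref{subp} together with Slater implies strong duality, so a Lagrange multiplier $\lambda_t\ge 0$ exists and complementary slackness is exactly \eqref{complementary}. The stationarity condition $0\in\partial_x L(\widetilde x,\lambda_t)$ for the Lagrangian $L(x,\lambda)=\|x\|_1-\frac{\omega_t}{\|x^t\|}\langle x,x^t\rangle+\frac{\alpha}2\|x-x^t\|^2+\lambda c_t(x)$ unfolds, via the convex sum rule (valid since every summand other than $\|\cdot\|_1$ is smooth), into $0\in\partial\|\widetilde x\|_1-\frac{\omega_t x^t}{\|x^t\|}+\lambda_t(\nabla P_1(x^t)-\zeta^t)+(\alpha+\lambda_t l_t)(\widetilde x-x^t)$, which is precisely \eqref{firstorder}.
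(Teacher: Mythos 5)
Your proposal is correct and takes essentially the same approach as the paper: both arguments hinge on exactly the same key step, namely that $\zeta^t\in\partial P_2(x^t)$ gives $\nabla P_1(x^t)-\zeta^t\in\partial^\circ q(x^t)$ via \eqref{clarkesubdiffq}, so the GMFCQ forces $\nabla P_1(x^t)-\zeta^t\neq 0$ when $q(x^t)=0$, and item (ii) then follows from standard convex duality (the paper cites Rockafellar's Corollary~28.2.1 and Theorem~28.3). The only cosmetic difference is that the paper establishes strict feasibility by rewriting the feasible set of \eqref{subp} as a ball $B\left(s^t,\sqrt{R_t}\right)$ and showing $R_t>0$, whereas you exhibit a strictly feasible point directly by taking a small step along $-(\nabla P_1(x^t)-\zeta^t)$.
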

\begin{proof}
	Notice that we can rewrite the feasible set of \eqref{subp} as $ B\left(s^t,\sqrt{R_t}\right)$  with $s^t:=x^t - \frac{1}{l_t}(\nabla P_1(x^t)-\zeta^t)$
	and $ R_t:=\frac{1}{l_t^2}\|\nabla P_1(x^t)-\zeta^t\|^2 - \frac{2}{l_t}q(x^t)$, where $R_t \ge 0$ because $q(x^t)\le 0$. Suppose to the contrary that $ R_t = 0$. Then we have $q(x^t) = 0$ and $
	\nabla P_1(x^t)-\zeta^t= 0$. The latter relation together with \eqref{clarkesubdiffq} implies $ 0\in\partial^\circ q(x^t) $, contradicting the GMFCQ assumption at $x^t $. Thus, we must have $R_t>0$ and hence the Slater's condition holds for \eqref{subp} at the $t^{\rm th}$ iteration.
	
	Since the Slater's condition holds for \eqref{subp}, we can apply \cite[Corollary~28.2.1]{Rock70} and \cite[Theorem~28.3]{Rock70} to conclude that there exists a Lagrange multiplier $\lambda_t$ such that  the relation \eqref{complementary} holds at the $t^{\rm th}$ iteration and  $\widetilde x$ minimizes the following function:
	\begin{align*}
		{\frak L}_t(x) := & \|x\|_1 - \frac{\omega_t}{\|x^t\|}\langle x,x^t\rangle +\frac\alpha 2 \|x-x^t\|^2 \\
		&+ \lambda_t\left(q(x^t) + \left\langle \nabla P_1(x^t)-\zeta^t, x-x^t\right\rangle + \frac{l_t}2 \|x-x^t \|^2\right).
	\end{align*}
	This fact together with \cite[Exercise~8.8]{RockWets98} and \cite[Theorem~10.1]{RockWets98} implies that \eqref{firstorder} holds at the $t^{\rm th}$ iteration. This completes the proof.
\end{proof}

Now we are ready to establish the subsequential convergence of Algorithm~\ref{alg1}. In our analysis, we assume that the GMFCQ holds and that the $ \{x^t\} $ generated by MBA$_{\ell_1/\ell_2}$ is bounded. The latter boundedness assumption was also used in \cite{WangYanLou19} for analyzing the convergence of Algorithm~\ref{alg:lou}. We remark that this assumption is not too restrictive. Indeed, for the sequence $\{x^t\}$ generated by MBA$_{\ell_1/\ell_2}$, in view of Lemma~\ref{welldefine}(i), we know that $q(x^t)\le 0$ for all $t$. Thus, if $q$ is level-bounded, then $\{x^t\}$ is bounded. On the other hand, if $q$ is only known to be bounded from below (as in \eqref{prob:LS}, \eqref{prob:Lorentzian} and \eqref{prob:rcs}) but the corresponding \eqref{P0} is known to have an optimal solution, then one may replace $q(x)$ by the level-bounded function $q_M(x) := q(x) + (\|x\| -M)_+^2$ for a sufficiently large $M$. As long as $M >\|x^*\|$ for some optimal solution $x^*$ of \eqref{P0}, replacing $q$ by $q_M$ in \eqref{P0} will not change the optimal value.
\begin{theorem}[Subsequential convergence of MBA$_{\ell_1/\ell_2}$]\label{thm1}
	Consider \eqref{P0} and suppose that Assumption~\ref{asmp:gmfcq} holds.  Let $ \{(x^t,\zeta^t,\bar l_t)\} $ be the sequence generated by MBA$_{\ell_1/\ell_2}$ and $\lambda_t$ be a Lagrange multiplier of \eqref{subp} with $ l_t = \bar l_t $. Suppose in addition that $ \{x^t\} $ is bounded. Then the following statements hold:
	\begin{enumerate}[\rm (i)]
		\item $ \lim_{t\rightarrow\infty} \|x^{t+1}-x^t\| = 0$;
		\item The sequences $ \{\lambda_t\} $ and $\{\zeta^t\}$ are bounded;
		\item Let $\bar x$ be an accumulation point of $ \{x^t\} $. Then $\bar x$ is a Clarke critical point of \eqref{P0}.
		If $ q $ is also regular at $\bar x$, then $\bar x$ is a stationary point.
	\end{enumerate}
\end{theorem}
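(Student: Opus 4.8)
The plan is to establish the three items in order, drawing on the sufficient-descent inequality \eqref{suff_decrs}, the KKT system \eqref{complementary}--\eqref{firstorder}, and the GMFCQ supplied by Assumption~\ref{asmp:gmfcq}. For item (i), I would first note that $\{\omega_t\}$ is nonnegative and, by \eqref{suff_decrs}, nonincreasing, hence convergent. Since $\{x^t\}$ is bounded and bounded away from $0$ by Lemma~\ref{welldefine}(iii), there is $M>0$ with $\|x^{t+1}\|\le M$, so \eqref{suff_decrs} yields $\frac{\alpha}{2M}\|x^{t+1}-x^t\|^2\le \omega_t-\omega_{t+1}$. Telescoping this and using convergence of $\{\omega_t\}$ shows $\sum_t\|x^{t+1}-x^t\|^2<\infty$, whence $\|x^{t+1}-x^t\|\to 0$.

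For item (ii), boundedness of $\{\zeta^t\}$ is immediate: $\zeta^t\in\partial P_2(x^t)$ with $P_2$ convex continuous, so $\{\zeta^t\}$ lies in the locally bounded image of the bounded set $\{x^t\}$ under $\partial P_2$. Boundedness of $\{\lambda_t\}$ is the crux and I would argue by contradiction. If $\lambda_{t_j}\to\infty$ along a subsequence with $x^{t_j}\to\bar x$, I would divide \eqref{firstorder} (taken with $l_t=\bar l_t$ and $\widetilde x=x^{t+1}$) by $\lambda_{t_j}$ and pass to the limit: the $\partial\|\cdot\|_1$ term and the $\omega_t x^t/\|x^t\|$ term vanish after division since both are bounded, the perturbation $(\alpha/\lambda_{t_j}+\bar l_{t_j})(x^{t_j+1}-x^{t_j})\to 0$ by item (i), boundedness of $\{\bar l_t\}$ (Lemma~\ref{welldefine}(ii)) and $\alpha/\lambda_{t_j}\to 0$, and $\zeta^{t_j}\to\bar\zeta\in\partial P_2(\bar x)$ along a further subsequence by closedness of the convex subdifferential graph. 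This gives $0=\nabla P_1(\bar x)-\bar\zeta$, i.e. $0\in\partial^\circ q(\bar x)$ via Lemma~\ref{lem:regularity2barq}. To contradict GMFCQ I still need $q(\bar x)=0$, which I would extract from complementary slackness \eqref{complementary}: as $\lambda_{t_j}>0$ eventually, the bracket in \eqref{complementary} vanishes, and since its last two summands tend to $0$ (item (i), boundedness of $\{\bar l_t\}$ and $\{\zeta^t\}$) we get $q(x^{t_j})\to 0$, hence $q(\bar x)=0$. This contradicts Assumption~\ref{asmp:gmfcq}, so $\{\lambda_t\}$ is bounded.

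For item (iii), let $x^{t_j}\to\bar x$; then $x^{t_j+1}\to\bar x$ too by item (i), and $\bar x\neq 0$ since $\|x^t\|\ge\delta$. Using item (ii), pass to a further subsequence so that $\lambda_{t_j}\to\bar\lambda\ge 0$, $\zeta^{t_j}\to\bar\zeta\in\partial P_2(\bar x)$, $\bar l_{t_j}\to\bar l$, and $\eta^{t_j}\to\bar\eta\in\partial\|\bar x\|_1$ for some selection $\eta^{t_j}\in\partial\|x^{t_j+1}\|_1$ realizing the inclusion \eqref{firstorder}. Taking the limit in \eqref{firstorder} (the term $(\alpha+\lambda_{t_j}\bar l_{t_j})(x^{t_j+1}-x^{t_j})$ again vanishes by item (i) and boundedness) gives $0=\bar\eta-\frac{\|\bar x\|_1}{\|\bar x\|^2}\bar x+\bar\lambda(\nabla P_1(\bar x)-\bar\zeta)$. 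Dividing by $\|\bar x\|$ and invoking \eqref{subdatbarx} together with Lemma~\ref{lem:regularity2barq} recasts this as $0\in\partial\frac{\|\bar x\|_1}{\|\bar x\|}+\frac{\bar\lambda}{\|\bar x\|}\partial^\circ q(\bar x)$. For the complementary-slackness requirement I would split into cases: if $q(\bar x)=0$ it holds trivially; if $q(\bar x)<0$, then the bracket in \eqref{complementary} is eventually negative (its perturbative summands vanish), forcing $\lambda_{t_j}=0$ for large $j$ and hence $\bar\lambda=0$. In either case $\frac{\bar\lambda}{\|\bar x\|}q(\bar x)=0$, so $\bar x$ is a Clarke critical point per \eqref{stationary1}. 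The final assertion, that regularity of $q$ at $\bar x$ upgrades Clarke criticality to stationarity, then follows at once from Proposition~\ref{liftedstationary}(ii).

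The main obstacle I anticipate is item (ii): the blow-up argument requires simultaneously controlling three bounded-but-limiting objects (the $\ell_1$-subgradient selection, $\zeta^t$, and $\bar l_t$) while invoking closedness of the limiting and convex subdifferential graphs, and---most delicately---first pinning down $q(\bar x)=0$ from complementary slackness before GMFCQ can be applied to derive the contradiction.
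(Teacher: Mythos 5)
Your proposal is correct and follows essentially the same route as the paper's proof: telescoping the sufficient-descent inequality for (i), a contradiction argument for the boundedness of $\{\lambda_t\}$ that combines division of \eqref{firstorder} by $\lambda_{t_j}$ (yielding $0\in\partial^\circ q(\bar x)$ via Lemma~\ref{lem:regularity2barq}) with complementary slackness (yielding $q(\bar x)=0$) to contradict the GMFCQ, and passage to the limit in the KKT system plus \eqref{subdatbarx} and Proposition~\ref{liftedstationary}(ii) for (iii). The only differences are cosmetic reorderings — the paper pins down $q(x^*)=0$ before the division step, and obtains $\bar\lambda q(\bar x)=0$ in (iii) by taking limits directly in \eqref{complementary} rather than by your case split — so no substantive comparison is needed.
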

\begin{proof}
    Since $ \{x^t\} $ is bounded, there exists $ M>0 $ such that $ \|x^t\|\leq M $ for all $ t\in\mathbb{N}_+ $. Using \eqref{suff_decrs}, we obtain
	\[
	\sum_{t=0}^{\infty} \frac{\alpha}{2M}\|x^t-x^{t+1}\|^2 \leq \omega_0-\liminf_{t\rightarrow\infty} \omega_t \leq  \omega_0,
	\]
	which proves item (i).
	
	Now we turn to item (ii). The boundedness of $ \{\zeta^t\} $ follows from the boundedness of $ \{x^t\} $ and \cite[Theorem~2.6]{Tuy98}. We next prove the boundedness of $\{\lambda_t\}$. Suppose to the contrary that $\{\lambda_{t}\}$ is unbounded. Then there exists a subsequence $ \{\lambda_{t_k}\} $ such that $ \lim_{k\rightarrow\infty} \lambda_{t_k}=\infty$.
	Passing to a subsequence if necessary, we can find subsequences $ \{x^{t_k}\} $ and $ \{\lambda_{t_k}\} $ such that $ \lim_{k\rightarrow\infty} x^{t_k} = x^*$ and $ \lambda_{t_k}> 0 $ for all $ k\in \mathbb{N}_+ $, where the existence of $ x^* $ is due to the boundedness of $ \{x^t\} $. According to \eqref{complementary} and the definition of $ x^{t_k+1} $, we obtain
	\[
	q(x^{t_{k}}) +  \langle  \nabla P_1(x^{t_k})-\zeta^{t_k}, x^{t_k+1} - x^{t_k}\rangle + \frac{\bar l_{t_k}}2\left\|x^{t_k+1} - x^{t_k}\right\|^2 = 0.
	\]
	Since $ \{x^t\} $ is bounded and \add{$ \nabla P_1 $ is Lipschitz continuous}, we then see that $ \{\nabla P_1(x^t)\} $ is bounded. Moreover, $ \{\bar l_{t_k}\} $ is bounded thanks to Lemma~\ref{welldefine}(ii) and we also know that $\{\zeta^t\}$ is bounded. Using these facts, item (i) and the continuity of $ q $, we have upon passing to the limit in the above display that $ q(x^*) =0 $. Since the GMFCQ holds for \eqref{P0} at $ x^* $, we then have $ 0\notin \partial^\circ q(x^*) $.

	Let $ t=t_k $, $ l_t = \bar l_{t_k} $, $  \widetilde{x} =x^{t_k+1} $ in \eqref{firstorder}, and divide both sides of \eqref{firstorder} by $ \lambda_{t_k} $. Then
	\[
	\nabla P_1(x^{t_k})-\zeta^{t_k} \in -\frac 1{\lambda_{t_k}}\partial \|x^{t_k+1}\|_1 + \frac{\omega_{t_k}x^{t_k}}{\lambda_{t_k}\|x^{t_k}\|}-\left(\bar l_{t_k}+\frac{\alpha}{\lambda_{t_k}}\right)(
	x^{t_k+1}-x^{t_k}).
	\]
	Thus, there exists a sequence $ \{\eta^{k}\} $ satisfying $ \eta^k\in\partial \|x^{t_k+1}\|_1 $ and
	\[
	\nabla P_1(x^{t_k})-\zeta^{t_k} = -\frac 1{\lambda_{t_k}}\eta^k + \frac{\omega_{t_k}x^{t_k}}{\lambda_{t_k}\|x^{t_k}\|}-\left(\bar l_{t_k}+\frac{\alpha}{\lambda_{t_k}}\right)(
	x^{t_k+1}-x^{t_k}).
	\]
   Note that $\{\eta^k\}$ is bounded \add{since $ \partial \|x\|_1\subseteq [-1, 1]^n $ for any $ x\in\R^n $.} Moreover, $ \{\omega_{t_k}\} $ is bounded since $ \|x\| \leq \|x\|_1\leq \sqrt{n}\|x\|$ for any $ x\in\R^n $. Furthermore, we have the boundedness of $\{\bar l_{t_k}\}$ from Lemma~\ref{welldefine}(ii). Also recall that $ \lim_{k\rightarrow\infty} \lambda_{t_k}=\infty$ and $\zeta^t\in \partial P_2(x^t)$. Using these together with item (i), we have upon passing to the limit in the above display and invoking the closedness of $\partial P_2$ (see Exercise 8 of \cite[Section~4.2]{BorLew06}) that $\nabla P_1(x^*)\in \partial P_2(x^*)$. This together with \eqref{clarkesubdiffq} further implies $0\in \partial^\circ q(x^*)$, leading to a contradiction. Thus, the sequence $ \{\lambda_t\} $ is bounded.
	
	We now turn to item (iii). Suppose $ \bar x $ is an accumulation point of $ \{x^t\} $ with $ \lim_{j\rightarrow\infty} x^{t_j} = \bar x $ for some convergent subsequence $\{x^{t_j}\}$. Since $ \{ \lambda_t, \bar l_t\} $ and $ \{\zeta^t\} $ are bounded (thanks to Lemma~\ref{welldefine}(ii) and item (ii)), passing to a further subsequence if necessary, we may assume without loss of generality that
	\begin{equation}\label{hahaha}
		\lim_{j\rightarrow\infty}(\lambda_{t_j}, \bar l_{t_j}) = (\bar \lambda, \bar l) \mbox{ for some } \bar\lambda,\ \bar l\geq0,\ \ \ \lim_{j\rightarrow\infty} \zeta^{t_j} = \bar \zeta \mbox{ for some } \bar \zeta\in \partial P_2(\bar x);
	\end{equation}
	here, $ \bar \zeta\in\partial P_2(\bar x) $ because of the closedness of $\partial P_2$ (see Exercise 8 of \cite[Section~4.2]{BorLew06}). On the other hand, according to Lemma~\ref{welldefine}(iii), we have $ \|x^t\|\geq \delta>0 $ for all $t\in \mathbb{N}_+$. This together with the definition of $ \bar x $ yields $ \|\bar x\|\neq 0 $. It then follows that  $ \frac{\|\cdot\|_1}{\|\cdot\|} $ is continuous at $\bar x$. Thus, we have, upon using this fact, the definition of $ \omega_t $, the continuity of $ \nabla P_1 $, the closedness of $ \partial \| \cdot \|_1 $, item (i), \eqref{hahaha}, and passing to the limit as $ j\rightarrow\infty $ in \eqref{firstorder} with $ (\widetilde x,\lambda_t, l_t)=(x^{t_j+1},\lambda_{t_j},  \bar l_{t_j}) $ and $ t = t_j $ that
	\[
	0\in\partial\| \bar x \|_1- \frac{\|\bar x\|_1}{\|\bar x\|^2}\bar x+\bar\lambda
	(\nabla P_1(\bar x)-\bar \zeta).
	\]
	We then divide both sides of the above inclusion by $ \|\bar x\| $ and obtain
	\begin{equation}\label{eq4}
		0\in \frac{1}{\|\bar x\|}\partial\| \bar x \|_1- \frac{\|\bar x\|_1}{\|\bar x\|^3}\bar x+\frac{\bar\lambda}{\|\bar x\|}
		(\nabla P_1(\bar x)-\bar \zeta)= \partial \frac{\|\bar x\|_1}{\|\bar x\|}+\frac{\bar\lambda}{\|\bar x\|}
		(\nabla P_1(\bar x)-\bar \zeta),
	\end{equation}
    where the equality holds due to \eqref{subdatbarx}. In addition, using \eqref{complementary} with $ (\widetilde x,\lambda_t, l_t)=(x^{t_j+1},\lambda_{t_j},  \bar l_{t_j}) $ and $ t = t_j $, we have
	\[
	\lim_{j\rightarrow\infty}\lambda_{t_j}\left[q(x^{t_j})+ \langle \nabla P_1(x^{t_j})-\zeta^{t_j}, x^{t_j+1}-x^{t_j}\rangle+\frac{\bar l_{t_j}}{2}\|x^{t_j+1}-x^{t_j}\|^2\right] = 0.
	\]
	This together with item (i) and \eqref{hahaha} shows that $\bar \lambda q(\bar x) = 0$. Combining this with \eqref{eq4}, $\bar\zeta\in \partial P_2(\bar x)$ (see \eqref{hahaha}), \eqref{clarkesubdiffq} and the fact that $q(\bar x)\le 0$ (because $q(x^t)\le 0$ for all $t$) shows that $\bar x$ is a Clarke critical point.
    Finally, the claim concerning stationarity follows immediately from Proposition~\ref{liftedstationary}. This completes the proof.
\end{proof}

\subsection{Global convergence under KL assumption}\label{sec62}
We now discuss global convergence of the sequence $\{x^t\}$ generated by Algorithm~\ref{alg1}. Our analysis follows the line of analysis in \cite{Attouch09,Attouch10,Attouch13,BolPau16,BolChenPau18,YuPongLv20} and is based on the following auxiliary function:
\begin{equation}\label{barF}
		\widetilde F(x,y,\zeta, w) := \frac{\|x\|_1 }{\|x\|}+\delta_{[\widetilde q \le 0]}(x,y,\zeta,w) + \delta_{\|\cdot\|\ge \rho}(x),
	\end{equation}
	with
	\begin{equation}\label{tildeq}
	\widetilde q(x,y,\zeta,w):= P_1(y) +  \langle\nabla P_1(y), x-y \rangle + P^*_2(\zeta)-  \langle \zeta, x\rangle + \frac w 2 \|x-y\|^2,
	\end{equation}
where $P_1$ and $P_2$ are as in \eqref{P0}, and $\rho > 0$ is chosen such that $\{x:\; q(x)\le 0\}\subset \{x:\; \|x\| > \rho\}$.
Some comments on $\widetilde F$ are in place. First, recall that in the potential function used in \cite{BolPau16} for analyzing their MBA variant, the authors replaced $P_1(x)$ by a quadratic majorant $P_1(y) +  \langle\nabla P_1(y), x-y \rangle + \frac{L_p} 2 \|x-y\|^2$, where $L_p$ is the Lipschitz continuity modulus of $\nabla P_1$. Here, as in \cite{YuPongLv20}, we further introduce the variable $w$ to handle the varying $\bar l_t$. Finally, to deal with the possibly nonsmooth $-P_2$, we replaced $-P_2(x)$ by its majorant $P^*_2(\zeta)-  \langle \zeta, x\rangle$ as in \cite{LiuPong19}.

The next proposition concerns the subdifferential of $\widetilde F$ and will be used for deriving global convergence of the sequence generated by MBA$_{\ell_1/\ell_2}$.
\begin{proposition}\label{lem:subd_barF}
	Consider \eqref{P0} and assume that $P_1$ is twice continuously differentiable. Suppose that Assumption~\ref{asmp:gmfcq} holds. Let $\{(x^t,\zeta^t, \bar{l}_t)\}$ be the sequence generated by MBA$_{\ell_1/\ell_2}$ and suppose that $ \{x^t\} $ is bounded. Let $\widetilde F$ and $\widetilde q$ be given in \eqref{barF} and \eqref{tildeq} respectively.
	Then the following statements hold:
	\begin{enumerate}[\rm (i)]
		\item For any $ t\in\mathbb{N}_+ $, we have $ \widetilde{q}(x^{t+1},x^t, \zeta^t, \bar l_t)\leq 0 $.
		\item There exist $\kappa>0$ and $\underline{t}\in\mathbb{N}_+$ such that
		\begin{equation*}
		{\rm dist}(0, \partial\widetilde F(x^{t+1},x^t,\zeta^t,\bar{l}_t)) \leq \kappa \|
		x^{t+1} - x^t \|\ {\rm \ for \ all \ }t>\underline{t}.
		\end{equation*}
	\end{enumerate}
\end{proposition}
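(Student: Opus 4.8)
The plan is to handle the two parts separately, with part (i) feeding the multiplier bookkeeping in part (ii). For part (i), I would observe that $\widetilde q(x^{t+1},x^t,\zeta^t,\bar l_t)$ coincides \emph{exactly} with the left-hand side of the constraint in \eqref{subp} evaluated at $\widetilde x=x^{t+1}$ with $l_t=\bar l_t$. Indeed, writing $q(x^t)=P_1(x^t)-P_2(x^t)$ and comparing term by term, the difference between that constraint left-hand side and $\widetilde q$ in \eqref{tildeq} equals $P_2^*(\zeta^t)+P_2(x^t)-\langle\zeta^t,x^t\rangle$, which vanishes by \eqref{Young} since $\zeta^t\in\partial P_2(x^t)$. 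As $x^{t+1}$ is feasible for \eqref{subp}, the constraint left-hand side is $\le 0$, so $\widetilde q(x^{t+1},x^t,\zeta^t,\bar l_t)\le 0$.

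For part (ii), I would first set up the subdifferential calculus for $\widetilde F$ at $z^t:=(x^{t+1},x^t,\zeta^t,\bar l_t)$. Since $q(x^{t+1})\le 0$ forces $\|x^{t+1}\|>\rho$ by the choice of $\rho$, the term $\delta_{\|\cdot\|\ge\rho}$ is locally zero and contributes nothing. Using $P_1\in C^2$ and the convexity of $P_2^*$, the function $\widetilde q$ is the sum of a smooth function and the convex map $\zeta\mapsto P_2^*(\zeta)$, hence is regular, with partial subgradient components at $z^t$ given by $\partial_x\widetilde q=\nabla P_1(x^t)-\zeta^t+\bar l_t(x^{t+1}-x^t)$, $\partial_y\widetilde q=(\nabla^2P_1(x^t)-\bar l_t I)(x^{t+1}-x^t)$, $x^t-x^{t+1}\in\partial_\zeta\widetilde q$ (using $x^t\in\partial P_2^*(\zeta^t)$, again from \eqref{Young}), and $\partial_w\widetilde q=\tfrac12\|x^{t+1}-x^t\|^2$. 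The calculus work is then to show, via this regularity, the constraint qualification inherited from Assumption~\ref{asmp:gmfcq}, the inclusion $\bigcup_{\mu\ge 0}\mu\,\partial\widetilde q(z^t)\subseteq N_{[\widetilde q\le 0]}(z^t)$ (established as in \eqref{hahahaha}), and the sum rule \cite[Corollary~10.9]{RockWets98}, that $\partial\widetilde F(z^t)$ contains every vector of the form $(v_x,0,0,0)+\mu(\partial_x\widetilde q,\partial_y\widetilde q,\partial_\zeta\widetilde q,\partial_w\widetilde q)$ with $v_x\in\partial\frac{\|x^{t+1}\|_1}{\|x^{t+1}\|}$ and $\mu\ge 0$.

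With this formula in hand, I would pick the explicit $\eta\in\partial\|x^{t+1}\|_1$ furnished by \eqref{firstorder}, use it to form $v_x=\frac{1}{\|x^{t+1}\|}\eta-\frac{\|x^{t+1}\|_1}{\|x^{t+1}\|^3}x^{t+1}\in\partial\frac{\|x^{t+1}\|_1}{\|x^{t+1}\|}$ (see \eqref{subdatbarx}), and take the multiplier $\mu=\lambda_t/\|x^{t+1}\|\ge 0$. Substituting $\eta$ from \eqref{firstorder}, the terms $\lambda_t(\nabla P_1(x^t)-\zeta^t)$ and $\lambda_t\bar l_t(x^{t+1}-x^t)$ cancel against those in $\mu\,\partial_x\widetilde q$, so the $x$-component telescopes to
\[
\frac{1}{\|x^{t+1}\|}\left(\frac{\|x^t\|_1}{\|x^t\|^2}x^t-\frac{\|x^{t+1}\|_1}{\|x^{t+1}\|^2}x^{t+1}\right)-\frac{\alpha}{\|x^{t+1}\|}(x^{t+1}-x^t),
\]
which is precisely the expression bounded in the proof for Algorithm~\ref{alg:lou}; using $\|x^t\|\ge\delta$ (Lemma~\ref{welldefine}(iii)), boundedness of $\{x^t\}$, and local Lipschitzness of $x\mapsto\frac{\|x\|_1}{\|x\|^2}x$ on $\{\delta\le\|x\|\le M\}$, its norm is $\le C\|x^{t+1}-x^t\|$. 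The components $\mu\,\partial_y\widetilde q$ and $\mu\,\partial_\zeta\widetilde q$ are bounded by $\|x^{t+1}-x^t\|$ through boundedness of $\{\lambda_t\}$ (Theorem~\ref{thm1}(ii)), $\{\bar l_t\}$ (Lemma~\ref{welldefine}(ii)), $\sup_t\|\nabla^2P_1(x^t)\|<\infty$, and $\|x^{t+1}\|\ge\delta$. The only nonlinear term is $\mu\,\partial_w\widetilde q=\frac{\lambda_t}{2\|x^{t+1}\|}\|x^{t+1}-x^t\|^2$, which I would linearize using $\|x^{t+1}-x^t\|\to 0$ (Theorem~\ref{thm1}(i)): for $t$ beyond some $\underline t$ we have $\|x^{t+1}-x^t\|\le 1$, making this term $\le C'\|x^{t+1}-x^t\|$ as well. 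Summing the four estimates gives the claim with a suitable $\kappa$.

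I expect the crux to be the subdifferential calculus for $\widetilde F$: verifying that $\widetilde q$ is subdifferentially regular even though $P_2^*$ is merely convex and possibly nonsmooth, checking the constraint qualification that legitimizes $\bigcup_{\mu\ge 0}\mu\,\partial\widetilde q\subseteq N_{[\widetilde q\le 0]}$, and applying the sum rule across the three pieces of $\widetilde F$ on the product space $\R^n\times\R^n\times\R^n\times\R$ (where the fractional objective and ball indicator are constant in the extra variables). Once these are in place, the telescoping of the $x$-component and the norm estimates are routine in light of the earlier analysis of Algorithm~\ref{alg:lou}.
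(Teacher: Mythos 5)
Your proposal is correct in substance and follows essentially the same route as the paper's proof: part (i) is the same Young-equality computation, and part (ii) constructs the same element of $\partial\widetilde F(x^{t+1},x^t,\zeta^t,\bar l_t)$ from the KKT conditions \eqref{complementary}--\eqref{firstorder} with the scaled multiplier $\lambda_t/\|x^{t+1}\|$, telescopes the $x$-component, and bounds the four blocks via Lemma~\ref{welldefine}(ii)--(iii), Theorem~\ref{thm1}(i)--(ii) and the eventual bound $\|x^{t+1}-x^t\|\le 1$, exactly as in the paper.

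One step should be tightened. The inclusion $\bigcup_{\mu\ge 0}\mu\,\partial\widetilde q\subseteq N_{[\widetilde q\le 0]}$ at the current point is not valid for arbitrary $\mu\ge 0$: what \cite[Proposition~10.3]{RockWets98} (and the argument behind \eqref{hahahaha}, which is made there under the activity condition $q(\bar x)=0$) certifies is $\mu\,\widehat\partial\widetilde q\subseteq\widehat N_{[\widetilde q\le 0]}$ only for $\mu\in N_{\R_{-}}\bigl(\widetilde q(x^{t+1},x^t,\zeta^t,\bar l_t)\bigr)$, i.e., only for $\mu=0$ when $\widetilde q(x^{t+1},x^t,\zeta^t,\bar l_t)<0$ (in one dimension, with $\widetilde q(z)=z$, the set $\{z:\;z\le 0\}$ at $z=-1$ has normal cone $\{0\}$, while $\mu\,\widetilde q'(-1)=\mu\neq 0$ for $\mu>0$); no constraint qualification rescues this, nor is one needed for this direction of the inclusion. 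Your specific choice $\mu=\lambda_t/\|x^{t+1}\|$ does respect the restriction, because \eqref{complementary} combined with your part-(i) identity (the last equality in \eqref{barht}) gives $\lambda_t\,\widetilde q(x^{t+1},x^t,\zeta^t,\bar l_t)=0$; this is exactly the verification the paper inserts (``$\lambda_t\in N_{\R_{-}}(\widetilde q(x^{t+1},x^t,\zeta^t,\bar l_t))$'') before substituting the multiplier, and with that one line added your argument is complete.
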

\begin{proof}
	We first observe that
	\begin{align}\label{barht}
	\begin{split}
		&\widetilde q(x^{t+1}, x^t, \zeta^t, \bar l_t) \\
		&= P_1(x^t) + \langle\nabla P_1(x^t), x^{t+1}-x^t\rangle+ P^*_2(\zeta^t)-\langle \zeta^t, x^{t+1}\rangle +\frac{\bar l_t}2 \|x^{t+1}-x^t\|^2\\
	& = P_1(x^t) +\langle \nabla P_1(x^t)-\zeta^t, x^{t+1}-x^t\rangle + P^*_2(\zeta^t)-\langle \zeta^t, x^{t}\rangle +\frac{\bar l_t}2 \|x^{t+1}-x^t\|^2\\
	& \overset{\rm (a)}= P_1(x^t) -P_2(x^t) + \langle \nabla P_1(x^t)-\zeta^t, x^{t+1}-x^t\rangle+\frac{\bar l_t}2 \|x^{t+1}-x^t\|^2 \\
	& = q(x^t) + \langle \nabla P_1(x^t)-\zeta^t, x^{t+1}-x^t\rangle+\frac{\bar l_t}2 \|x^{t+1}-x^t\|^2
	\leq 0,
	\end{split}
	\end{align}
	where (a) follows from \eqref{Young} because $ \zeta^t\in\partial P_2(x^t) $, and the last inequality holds because $ x^{t+1} $ is feasible for \eqref{subp} with $ l_t = \bar l_t $. This proves item (i).
	
	Now, note that $N_{\R_{-}}(\widetilde q(x^{t+1}, x^t,\zeta^t,\bar{l}_t)) = \{0\}$ if $\widetilde q(x^{t+1}, x^t,\zeta^t,\bar{l}_t) < 0$.
	Using this together with \cite[Proposition~10.3]{RockWets98}, we conclude that at any $ (x^{t+1}, x^t, \zeta^t, \bar l_t) $ (regardless of whether $\widetilde q(x^{t+1}, x^t, \zeta^t, \bar l_t) < 0 $ or $\widetilde q(x^{t+1}, x^t, \zeta^t, \bar l_t) = 0 $), the relation
	\[
	\widehat N_{[\widetilde q\leq 0]}(x^{t+1},x^t,\zeta^t,\bar l_t)
	\supseteq \lambda \widehat\partial \widetilde q(x^{t+1},x^t,\zeta^t,\bar l_t)
	\]
	holds for any $ \lambda \in N_{\R_{-}}(\widetilde q(x^{t+1}, x^t,\zeta^t,\bar{l}_t)) $. Thus, for any $ \lambda\in N_{\R_{-}}(\widetilde q(x^{t+1}, x^t,\zeta^t,\bar{l}_t)) $, we have that
	\begin{equation}\label{normal_barq}
		\begin{split}
			&\widehat N_{[\widetilde q\leq 0]}(x^{t+1},x^t,\zeta^t,\bar l_t) \supseteq \lambda \widehat\partial \widetilde q(x^{t+1},x^t,\zeta^t,\bar l_t) \\
			&\overset{\rm (a)} = \begin{bmatrix}
				\lambda[\nabla P_1(x^t) -\zeta^t + \bar l_t(x^{t+1}-x^t)] \\
				\lambda[\nabla^2 P_1(x^t)(x^{t+1}-x^t) - \bar l_t(x^{t+1}-x^t)]\\
				\lambda\partial P^*_2(\zeta^t) - \lambda x^{t+1}\\
				\frac \lambda 2\left\|x^{t+1}-x^t\right\|^2
			\end{bmatrix}  \overset{\rm (b)}\ni
			\begin{bmatrix}
				\lambda V_1^t \\
				\lambda V^t_2\\
				\lambda(x^t-x^{t+1})\\
				\frac \lambda 2\left\|x^{t+1}-x^t\right\|^2
			\end{bmatrix}
		\end{split},
	\end{equation}
	with
	\begin{equation}\label{v1v2}
		\begin{array}{l}
			V_1^t := \nabla P_1(x^t) -\zeta^t + \bar l_t(x^{t+1}-x^t),\\ [3 pt]
			V_2^t := \nabla^2 P_1(x^t)(x^{t+1}-x^t) - \bar l_t(x^{t+1}-x^t),
		\end{array}
	\end{equation}
	where (a) uses the definition of $ \widetilde q $, \cite[Exercise~8.8(c)]{RockWets98}, \cite[Proposition~10.5]{RockWets98} and \cite[Proposition~8.12]{RockWets98} (so that $\partial P^*_2(\zeta^t) = \widehat \partial P^*_2(\zeta^t)$), and (b) uses \eqref{Young} and the fact that $ \zeta^t\in\partial P_2(x^t) $. On the other hand, we have from \cite[Theorem~8.6]{RockWets98} that
	\begin{equation}\label{partialbarF}
		\begin{aligned}
			&\partial \widetilde F(x^{t+1},x^t,\zeta^t,\bar l_t)  \supseteq \widehat\partial \widetilde F(x^{t+1},x^t,\zeta^t,\bar l_t)\\
			& \overset{\rm (a)} \supseteq \begin{bmatrix}
				\frac{1}{\|x^{t+1}\|}\partial \left\|x^{t+1}\right\|_1-\frac{\|x^{t+1}\|_1}{\|x^{t+1}\|^3}x^{t+1}\\
				0\\
				0\\
				0
			\end{bmatrix} + \widehat{N}_{[\widetilde q\leq 0]}(x^{t+1},x^t,\zeta^t,\bar l_t),
		\end{aligned}
	\end{equation}
	where (a) uses \cite[Corollary 10.9]{RockWets98}, \eqref{subdatbarx} and \cite[Corollary~8.11]{RockWets98}, and the facts that $ \widehat\partial \delta_{[\widetilde q\leq 0]} (x^{t+1},x^t,\zeta^t,\bar l_t) = \widehat{N}_{[\widetilde q\leq 0]}(x^{t+1},x^t,\zeta^t,\bar l_t)$ and $\widehat N_{\|\cdot\|\ge \rho}(x^{t+1}) = \{0\}$.
	
	Let $ \lambda_t\geq 0 $ be a Lagrange multiplier of \eqref{subp} with $ l_t = \bar l_t $, which exists thanks to Lemma~\ref{slater-kkt}. In view of the inequality and the last equality in \eqref{barht} and using \eqref{complementary} with $ (\widetilde{x}, l_t) = (x^{t+1}, \bar l_t) $, we deduce that $ \lambda_t\in N_{\R_-}(\widetilde q(x^{t+1}, x^t,\zeta^t,\bar{l}_t))$, which in turn implies that $ \frac{\lambda_t}{\|x^{t+1}\|}\in N_{\R_-}(\widetilde q(x^{t+1}, x^t,\zeta^t,\bar{l}_t)) $. We can hence let $ \lambda = \frac{\lambda_t}{\|x^{t+1}\|} $ in \eqref{normal_barq} to obtain an element in $\widehat N_{[\widetilde q\leq 0]}(x^{t+1},x^t,\zeta^t,\bar l_t)$. Plugging this particular element into \eqref{partialbarF} yields
	\begin{equation}\label{bounded_subdbarF0}
			\partial \widetilde F(x^{t+1},x^t,\zeta^t,\bar l_t)
			\supseteq \begin{bmatrix}
				\frac{1}{\|x^{t+1}\|} \partial \left\|x^{t+1}\right\|_1-\frac{\|x^{t+1}\|_1}{\|x^{t+1}\|^3}x^{t+1}+\frac{\lambda_t}{\|x^{t+1}\|} V_1^t\\
				\frac{\lambda_t}{\|x^{t+1}\|} V_2^t\\
				\frac{\lambda_t}{\|x^{t+1}\|}(x^t-x^{t+1})\\
				\frac{\lambda_t}{2\|x^{t+1}\|}\left\|x^{t+1}-x^t\right\|^2
			\end{bmatrix},
	\end{equation}
	where $V_1^t$ and $ V_2^t $ are given in \eqref{v1v2}. On the other hand, applying \eqref{firstorder} with $ (\widetilde{x}, l_t) = (x^{t+1}, \bar l_t) $ and recalling that $\omega_t = \|x^t\|_1/\|x^t\|$, we obtain
    \begin{equation}\label{hehehaha}
    \begin{aligned}
	\partial \|x^{t+1}\|_1&\ni \frac{\|x^t\|_1}{\|x^t\|^2}x^t -\lambda_{t}(\nabla P_1(x^t)-\zeta^t)-(\alpha+\lambda_t\bar l_t)(x^{t+1}-x^t) \\
    &=  \frac{\|x^t\|_1}{\|x^t\|^2}x^t -\lambda_{t}V^t_1-\alpha (x^{t+1}-x^t).
    \end{aligned}
	\end{equation}
    Combining \eqref{bounded_subdbarF0} and \eqref{hehehaha}, we see further that
    \begin{equation}\label{bounded_subdbarF}
			\partial \widetilde F(x^{t+1},x^t,\zeta^t,\bar l_t)
			\ni \begin{bmatrix}
				J_1^t\\
				\frac{\lambda_t}{\|x^{t+1}\|}V_2^t\\
				\frac{\lambda_t}{\|x^{t+1}\|}(x^t-x^{t+1})\\
				\frac{\lambda_t}{2\|x^{t+1}\|}\left\|x^{t+1}-x^t\right\|^2
			\end{bmatrix},
	\end{equation}
where
	\begin{equation*}
		J_1^t := \frac{1}{\|x^{t+1}\|}  \left( \frac{\|x^t\|_1}{\|x^t\|^2}x^t - \frac{\|x^{t+1}\|_1}{\|x^{t+1}\|^2}x^{t+1} \right) - \frac{\alpha}{\|x^{t+1}\|} (x^{t+1}-x^t).
	\end{equation*}

    Next, recall from Lemma~\ref{welldefine}(iii) that
	\begin{equation}\label{eq9}
		\|x^{t+1}\|\ge \delta, \mbox{ for all } t\in\mathbb{N}_+.
	\end{equation}
	Using this together with our assumption that $ \{x^t\} $ is bounded, we see that there exists $ L_1>0 $ such that
	\begin{equation*}
		\left\| \frac{\|x^t\|_1}{\|x^t\|^2}x^t - \frac{\|x^{t+1}\|_1}{\|x^{t+1}\|^2}x^{t+1} \right\| \leq L_1 \|x^{t+1}-x^t\|\ \ \mbox{for all }t.
	\end{equation*}
	Combining the above three displays, we deduce that
	\begin{equation}\label{eq0}
		\left\|J_1^t\right\|\leq \frac{L_1+\alpha}{\delta}\|x^{t+1}-x^t\|.
	\end{equation}
	On the other hand, one can see from \eqref{eq9}, the definition of $ V_2^t $ (see \eqref{v1v2}), the boundedness of $ \{\lambda_t, \bar l_t\} $ (see Theorem~\ref{thm1}(ii) and Lemma~\ref{welldefine}(ii)), the continuity of $ \nabla^2 P_1 $ and the boundedness of $ \{x^t\} $ that there exist $ L_2>0 $ and $ L_3>0 $ such that
	\begin{equation}\label{eq5}
		\frac{\lambda_t}{\|x^{t+1}\|}\leq \frac{L_2}{\delta} \text{ and } \left\| \frac{\lambda_t}{\|x^{t+1}\|}V_2^t \right\|\leq  L_3 \|x^{t+1}-x^t\|.
	\end{equation}
	Moreover, we can see from Theorem~\ref{thm1}(i) that there exists $ \underline{t}\in \mathbb{N}_+ $ such that
	\[
	\|x^{t+1} - x^t\|^2 \leq \|x^{t+1} - x^t\|
	\]
	whenever $ t\geq \underline{t} $. Now we can conclude from \eqref{bounded_subdbarF}, \eqref{eq0}, \eqref{eq5} and the above display that there exists $ \kappa>0 $ such that
	\[
	{\rm dist}(0, \partial\widetilde F(x^{t+1},x^t,\zeta^t,\bar{l}_t)) \leq \kappa \|
	x^{t+1} - x^t \|
	\]
	for all $t\ge \underline{t}$. This completes the proof.
\end{proof}

When the sequence $\{x^t\}$ generated by MBA$_{\ell_1/\ell_2}$ is bounded, one can show that the set of accumulation points $\Omega$ of $\{(x^{t+1},x^t,\zeta^t,\bar l_t)\}$ is compact. This together with Lemma~\ref{welldefine}(iii) and the continuity of $\widetilde F$ on its domain shows that $\widetilde F$ is constant on $\Omega\subseteq {\rm dom}\,\partial \widetilde F$. Using this together with
Proposition~\ref{lem:subd_barF} and Lemma~\ref{welldefine}(iii), one can prove the following convergence result by imposing additional KL assumptions on $\widetilde F$. The proof is standard and follows the line of arguments as in \cite{Attouch09,Attouch10,Attouch13,BolSabTeb14,LiuPong19,WenChenPong18}. We omit the proof here for brevity.

\begin{theorem}[Global convergence and convergence rate of MBA$_{\ell_1/\ell_2}$]\label{thm:useful}
Consider \eqref{P0} and assume that $P_1$ is twice continuously differentiable. Suppose that Assumption~\ref{asmp:gmfcq} holds. Let $ \{x^t\} $ be the sequence generated by MBA$_{\ell_1/\ell_2}$ and assume that $  \{x^t\}$ is bounded. If $\widetilde F$ in \eqref{barF} is a KL function, then $ \{x^t\} $ converges to a Clarke critical point $ x^* $ of \eqref{P0} ($ x^* $ is stationary if $ q $ is in addition regular at $x^*$). Moreover, if $ \widetilde F $ is a KL function with exponent $ \theta\in[0,1) $, then the following statements hold:
	\begin{enumerate}[\rm (i)]
		\item If $ \theta = 0 $, then $ \{x^t\} $ converges finitely.
		\item If $ \theta\in (0, \frac12] $, then there exist $ c_0>0 $, $ Q_1\in(0,1) $ and $ \underline t\in\mathbb{N}_+ $ such that
		\[
		\|x^t-x^*\|\leq c_0Q_1^t \,\mbox{ for }\, t>\underline t.
		\]
		\item If $ \theta\in(\frac12, 1) $, then there exist $ c_0>0 $ and $ \underline{t}\in\mathbb{N}_+ $ such that
		\[
		\|x^t-x^*\|\leq c_0t^{-\frac{1-\theta}{2\theta-1}} \,\mbox{ for }\, t>\underline t.
		\]
	\end{enumerate}
\end{theorem}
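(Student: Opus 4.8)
The plan is to run the by-now-standard Kurdyka-{\L}ojasiewicz convergence machinery of \cite{Attouch09,Attouch13,BolSabTeb14} on the potential function $\widetilde F$ in \eqref{barF}, evaluated along the lifted sequence $z^t := (x^{t+1},x^t,\zeta^t,\bar l_t)$. The first observation is that this lifting makes the two indicator terms in \eqref{barF} vanish along the iterates: by Proposition~\ref{lem:subd_barF}(i) we have $\widetilde q(z^t)\le 0$, and since $q(x^{t+1})\le 0$ forces $\|x^{t+1}\|>\rho$ by the choice of $\rho$, we obtain $\widetilde F(z^t)=\|x^{t+1}\|_1/\|x^{t+1}\|=\omega_{t+1}$. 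With this identification the three ingredients required by the KL framework are already in hand: a \emph{sufficient decrease} $\widetilde F(z^{t-1})-\widetilde F(z^t)=\omega_t-\omega_{t+1}\ge \frac{\alpha}{2M}\|x^{t+1}-x^t\|^2$ from \eqref{suff_decrs} together with the bound $\|x^t\|\le M$; a \emph{relative error bound} ${\rm dist}(0,\partial\widetilde F(z^t))\le \kappa\|x^{t+1}-x^t\|$ from Proposition~\ref{lem:subd_barF}(ii); and a \emph{continuity} property, since $\{\omega_t\}$ is nonincreasing and bounded below (by $1$), hence convergent to some $\omega_*$.

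Second I would establish convergence of the whole sequence. Using Theorem~\ref{thm1}(i) and the boundedness of $\{(x^t,\zeta^t,\bar l_t)\}$ (Theorem~\ref{thm1}(ii) and Lemma~\ref{welldefine}(ii)), every accumulation point of $\{z^t\}$ has the form $(\bar x,\bar x,\bar\zeta,\bar l)$ with $\bar\zeta\in\partial P_2(\bar x)$; Young's equality \eqref{Young} then gives $\widetilde q(\bar x,\bar x,\bar\zeta,\bar l)=q(\bar x)\le0$ and $\widetilde F(\bar x,\bar x,\bar\zeta,\bar l)=\|\bar x\|_1/\|\bar x\|=\omega_*$, confirming that the compact accumulation set $\Omega$ of $\{z^t\}$ lies in ${\rm dom}\,\partial\widetilde F$ and that $\widetilde F\equiv\omega_*$ on $\Omega$. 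Invoking the uniformized KL property on $\Omega$ and combining the concavity of $\varphi$ with the KL inequality, the relative error bound and the sufficient decrease yields, for large $t$, an estimate of the form $\|x^{t+1}-x^t\|\le \frac14\|x^t-x^{t-1}\|+C\big(\varphi(\omega_t-\omega_*)-\varphi(\omega_{t+1}-\omega_*)\big)$; telescoping this gives $\sum_t\|x^{t+1}-x^t\|<\infty$. Hence $\{x^t\}$ is Cauchy and converges to some $x^*$, which is a Clarke critical point of \eqref{P0} (and stationary when $q$ is regular at $x^*$) by Theorem~\ref{thm1}(iii).

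For the rates, set $E_t:=\omega_{t+1}-\omega_*$ and specialize $\varphi(s)=a_0 s^{1-\theta}$. Combining the KL inequality ${\rm dist}(0,\partial\widetilde F(z^t))\ge c\,E_t^{\theta}$ with the relative error bound and the sufficient decrease produces the scalar recursion $E_t-E_{t+1}\ge C\,E_{t+1}^{2\theta}$ for some $C>0$. For $\theta=0$ this forces $E_t=0$ for large $t$, whereupon \eqref{suff_decrs} forces $x^{t+1}=x^t$, giving finite convergence. For $\theta\in(0,\tfrac12]$, since $2\theta-1\le0$ and $E_{t+1}\le E_0$ we have $E_{t+1}^{2\theta}\ge E_0^{2\theta-1}E_{t+1}$, so $E_{t+1}\le(1+CE_0^{2\theta-1})^{-1}E_t$ decays Q-linearly; for $\theta\in(\tfrac12,1)$ the recursion is the classical one giving $E_t=O(t^{-1/(2\theta-1)})$. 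Transferring these to the iterates through $\|x^t-x^*\|\le\sum_{k\ge t}\|x^{k+1}-x^k\|$ and the tail-sum estimate $\sum_{k\ge t}\|x^{k+1}-x^k\|\le C'\varphi(E_{t-1})\sim E_{t-1}^{1-\theta}$ (obtained by summing the telescoping inequality above) yields the linear rate $c_0Q_1^t$ in case (ii) and the sublinear rate $c_0 t^{-(1-\theta)/(2\theta-1)}$ in case (iii).

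The main obstacle to watch is that, unlike the textbook setting, the sufficient decrease controls only the \emph{partial} step $\|x^{t+1}-x^t\|$ rather than the full lifted increment $\|z^t-z^{t-1}\|$: the auxiliary coordinates $\zeta^t$ and $\bar l_t$, which may fail to be monotone or even convergent a priori, do not appear in \eqref{suff_decrs}. The resolution, as in \cite{YuPongLv20,LiuPong19}, is that the relative error bound of Proposition~\ref{lem:subd_barF}(ii) is likewise expressed purely in terms of $\|x^{t+1}-x^t\|$, so the entire telescoping argument closes with $\|x^{t+1}-x^t\|$ playing the role of the effective step, and the final iterate rates are stated for $\{x^t\}$ via $\|x^t-x^*\|\le\sum_{k\ge t}\|x^{k+1}-x^k\|$ without ever requiring a rate for the auxiliary variables. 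A secondary technical point is the verification that $\Omega\subseteq{\rm dom}\,\partial\widetilde F$ and that $\widetilde F$ is constant there, which relies on the uniform lower bound $\|x^t\|\ge\delta$ from Lemma~\ref{welldefine}(iii) together with the continuity of $\widetilde F$ on its domain.
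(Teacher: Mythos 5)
Your proposal is correct and takes essentially the same route the paper intends: the paper omits this proof as ``standard,'' pointing to the KL machinery of \cite{Attouch09,Attouch13,BolSabTeb14} applied to the lifted potential $\widetilde F$ along $(x^{t+1},x^t,\zeta^t,\bar l_t)$, with exactly the ingredients you assemble --- the identification $\widetilde F(x^{t+1},x^t,\zeta^t,\bar l_t)=\omega_{t+1}$ via Proposition~\ref{lem:subd_barF}(i) and the choice of $\rho$, sufficient descent from Lemma~\ref{welldefine}(iii), the relative error bound from Proposition~\ref{lem:subd_barF}(ii), constancy of $\widetilde F$ on the compact accumulation set, the uniformized KL property, and the standard recursion for the rates. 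Your closing observation that both the descent and error bounds involve only $\|x^{t+1}-x^t\|$, so the telescoping argument closes without ever needing rates for the auxiliary variables $(\zeta^t,\bar l_t)$, is precisely how the cited works \cite{YuPongLv20,LiuPong19} handle such lifted potentials.
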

\begin{remark}[KL property of $\widetilde F$ corresponding to \eqref{prob:LS}, \eqref{prob:Lorentzian} and \eqref{prob:rcs}]\label{rem:useful}
	\begin{enumerate}[{\rm (i)}]
		\item In both \eqref{prob:LS} and \eqref{prob:Lorentzian}, we have $q = P_1 $ being analytic and $P_2^* = \delta_{\{0\}}$. Hence $ \widetilde F $ becomes $ \widetilde F(x,y,\zeta,w) = \frac{\|x\|_1}{\|x\|} + \delta_{\Delta}(x,y,\zeta,w) $ with $\Delta = \{(x,y,\zeta,w):\; P_1(y)+\langle \nabla P_1(y), x- y\rangle +\frac{w}2\|x- y\|^2 \le 0, \zeta = 0, \|x\|\ge \rho\}$. Hence, the graph of $\widetilde F$ is
\[
\left\{(x,y,\zeta,w,z):\;
\begin{array}{l}
  \|x\|_1 = z\|x\|,\ \ \ \ \|x\|\ge \rho,\ \ \ \ \zeta = 0,\\
  P_1(y)+\langle \nabla P_1(y), x- y\rangle +\frac{w}2\|x- y\|^2 \le 0.
\end{array}
\right\},
\]
which is semianalytic \cite[Page~596]{FaPang03}. This means that $ \widetilde F $ is subanalytic \cite[Definition~6.6.1]{FaPang03}. Moreover, the domain of $ \widetilde F $ is closed and $\widetilde F|_{{\rm dom}\,\widetilde F}$ is continuous. Therefore, $ \widetilde F $ satisfies the KL property according to \cite[Theorem~3.1]{BolDan07}.
		
		\item For \eqref{prob:rcs}, first note that $ P_2 $ is a convex piecewise linear-quadratic function (see, for example, the proof of \cite[Theorem~5.1]{LiuPong19}). Then $ P_2^* $ is also piecewise linear-quadratic function thanks to  \cite[Theorem~11.14]{RockWets98}. Thus, one can see that  $\widetilde q  $ corresponding to \eqref{prob:rcs} is semialgebraic and so is the set $\Theta = \{(x, y, \zeta, w): \; \widetilde q(x,y, \zeta, w)\leq 0\} $. Therefore $\widetilde F$ is semialgebraic as the sum of the semialgebraic functions $x\mapsto \frac{\|x\|_1}{\|x\|} + \delta_{\|\cdot\|\ge\rho}(x)$ and $\delta_\Theta$, and is hence a KL function \cite{Attouch10}.
	\end{enumerate}
\end{remark}

Using Theorem~\ref{thm:useful}, Remark~\ref{rem:useful}, Proposition~\ref{GMFCQtoitem4} and the discussions preceding it, and recalling that continuously differentiable functions are regular, we have the following immediately corollary.
\begin{corollary}[Global convergence of MBA$_{\ell_1/\ell_2}$ for \eqref{prob:LS}, \eqref{prob:Lorentzian} and \eqref{prob:rcs}]
  \begin{enumerate}[{\rm (i)}]
    \item Suppose that MBA$_{\ell_1/\ell_2}$ is applied to \eqref{prob:LS} or \eqref{prob:Lorentzian}. Then the sequence generated converges to a stationary point of the problem if the sequence is bounded.
    \item Suppose that MBA$_{\ell_1/\ell_2}$ is applied to \eqref{prob:rcs}. Then the sequence generated converges to a Clarke critical point of the problem if the sequence is bounded.
  \end{enumerate}
\end{corollary}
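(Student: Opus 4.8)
The plan is to verify, in each of the two cases, all the hypotheses of Theorem~\ref{thm:useful}, invoke it to obtain convergence to a Clarke critical point, and then refine the conclusion to stationarity precisely when the constraint function $q$ is regular at the limit. Theorem~\ref{thm:useful} requires three ingredients: that $P_1$ be twice continuously differentiable, that Assumption~\ref{asmp:gmfcq} (the GMFCQ on the whole feasible set) hold, and that the potential function $\widetilde F$ in \eqref{barF} be a KL function. Each of these has already been addressed in the preceding results, so the work reduces to assembling them correctly and noting where the smoothness of $q$ does or does not yield regularity.

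For item (i), corresponding to \eqref{prob:LS} and \eqref{prob:Lorentzian}, I would first observe that $q=P_1$ with $P_1$ analytic --- a quadratic in the least-squares case and $x\mapsto \sum_{i=1}^m\log(1+\gamma^{-2}(Ax-b)_i^2)-\sigma$ in the Lorentzian case --- hence in particular twice continuously differentiable, so the smoothness requirement of Theorem~\ref{thm:useful} is met. Next, the discussion preceding Proposition~\ref{GMFCQtoitem4} already establishes that the GMFCQ holds at every feasible point of \eqref{prob:LS} and \eqref{prob:Lorentzian} because $A$ is surjective, so Assumption~\ref{asmp:gmfcq} holds. The KL property of $\widetilde F$ is supplied by Remark~\ref{rem:useful}(i) via subanalyticity. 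Theorem~\ref{thm:useful} then yields convergence of the bounded sequence $\{x^t\}$ to a Clarke critical point $x^*$. Since $q=P_1$ is continuously differentiable it is regular everywhere, in particular at $x^*$, and so the final clause of Theorem~\ref{thm:useful} upgrades the conclusion to stationarity, proving item (i).

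For item (ii), corresponding to \eqref{prob:rcs}, the argument is structurally identical up to the regularity step. Here $P_1(x)=\|Ax-b\|^2-\sigma^2$ is a quadratic and hence twice continuously differentiable; Assumption~\ref{asmp:gmfcq} now follows directly from Proposition~\ref{GMFCQtoitem4}, which asserts that the GMFCQ holds throughout the feasible set of \eqref{prob:rcs}; and the KL property of $\widetilde F$ is furnished by Remark~\ref{rem:useful}(ii) via semialgebraicity. Applying Theorem~\ref{thm:useful} again gives convergence of the bounded sequence to a Clarke critical point $x^*$. The crucial difference is that here $q=P_1-P_2$ involves the distance to the \emph{nonconvex} set $S=\{z:\|z\|_0\le r\}$ and is therefore not regular in general, so I cannot invoke the regularity clause; the conclusion stops at Clarke criticality, which is exactly what item (ii) claims.

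I expect the proof to be essentially an assembly of previously established facts rather than a source of genuine difficulty, since Theorem~\ref{thm:useful} carries all the analytic weight. The one point demanding care is the regularity distinction between the two items: one must resist claiming stationarity in item (ii), recognizing that the failure of regularity of $q$ --- rooted in the combinatorial structure of $S$ --- is precisely why \eqref{prob:rcs} yields only Clarke critical points, whereas \eqref{prob:LS} and \eqref{prob:Lorentzian}, with smooth constraints, yield genuine stationary points.
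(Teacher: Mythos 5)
Your proof is correct and follows essentially the same route as the paper, which derives the corollary immediately by combining Theorem~\ref{thm:useful}, Remark~\ref{rem:useful}, Proposition~\ref{GMFCQtoitem4} (and the surjectivity discussion preceding it), together with the fact that continuously differentiable functions are regular everywhere. Your careful handling of the regularity distinction --- upgrading to stationarity for \eqref{prob:LS} and \eqref{prob:Lorentzian} but stopping at Clarke criticality for \eqref{prob:rcs} --- is exactly the point the paper's one-line proof relies on.
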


\section{Numerical simulations}\label{sec7}
In this section, we perform numerical experiments on solving random instances of \eqref{prob:LS}, \eqref{prob:Lorentzian} and \eqref{prob:rcs} by MBA$_{\ell_1/\ell_2}$.
All numerical experiments are performed in MATLAB 2019b on a 64-bit PC with an Intel(R) Core(TM) i7-6700 CPU (3.40GHz) and 32GB of RAM.

We set $ l_{\min} = 10^{-8} $, $ l_{\max} = 10^8 $ and $\alpha = 1 $ in MBA$_{\ell_1/\ell_2}$. We let $ l_0^0 = 1 $ and compute, for each $ t\geq 1 $,
\[
l_t^0 = \begin{cases} \max\left\{l_{\min}, \min\left\{\frac{\langle d_x^t,d_g^t\rangle}{\|d_x^t\|^2}, l_{\max}\right\}\right\}  &{\rm if\ } \langle d_x^t ,d_g^t \rangle \ge 10^{-12},\\
\max\left\{l_{\min}, \min\left\{\frac{l_{t-1}}2, l_{\max}\right\}\right\} &{\rm otherwise},
\end{cases}
\]
where $d_x^t = x^t - x^{t-1}$ and $d_g^t=\xi^t - \xi^{t-1} $ with $ \xi^t = \nabla P_1(x^t) - \zeta^t$: specifically, $\zeta^t = 0$ when solving \eqref{prob:LS} and \eqref{prob:Lorentzian}, while for \eqref{prob:rcs}, we pick any $ \zeta^t\in {\rm Proj}_S(Ax^t-b) $, which can be obtained by finding the largest $ r $ entries of $ Ax^t-b $.

We initialize MBA$_{\ell_1/\ell_2}$ at some feasible point $ x_{\rm feas} $ and terminate MBA$_{\ell_1/\ell_2}$ when
\begin{equation}\label{termination}
\|x^{t}-x^{t-1}\|\leq tol\cdot \max\{\|x^t\|, 1\};
\end{equation}
we will specify the choices of $  x_{\rm feas} $ and $tol$ in each of the subsections below.


\subsection{Robust compressed sensing problems \eqref{prob:rcs}}
We generate a sensing matrix $ A\in\R^{(p+\iota)\times n} $ with i.i.d standard Gaussian entries and then normalize each column of $ A $. Next, we generate the original signal $x_{\rm orig}\in\mathbb{R}^{n}$  as a $k$-sparse  vector with $k$ i.i.d standard Gaussian entries at random (uniformly chosen) positions. We then generate a vector $ z_\iota\in\R^\iota $ with i.i.d. standard Gaussian entries, and set $ z\in\R^{p+\iota} $ to be a vector with the first $ p $ entries being zero and the last $ \iota $ entries being $ 2\,{\rm sign}(z_\iota) $. The vector $ b $ in \eqref{prob:rcs} is then generated as $ b = Ax_{\rm orig}-z+0.01\varepsilon $, where $ \varepsilon\in\R^{p+\iota} $ has i.i.d. standard Gaussian entries. Finally, we set $ \sigma=1.2\|0.01\varepsilon\| $ and $r = 2 \iota$. In MBA$_{\ell_1/\ell_2}$, we set $x_{\rm feas} = A^\dagger b $,\footnote{We compute $ A^\dagger b $ via the MATLAB commands {\sf [Q,R] = qr(A',0); xfeas = Q*(R'$\backslash$b)}. } and $ tol = 10^{-6} $ in \eqref{termination}.

In our numerical tests, we consider $ (n,p, k, \iota) = (2560i, 720i, 80i, 10i) $ with $ i\in\{2, 4, 6, 8, 10\} $. For each $ i $, we generate 20 random instances as described above. The computational results are shown in Table~\ref{table:rcs}. We present the time $ t_{\rm qr} $ for the (reduced) QR decomposition when generating $ x_{\rm feas} $, the CPU times $t_{\rm mba}$ and $t_{\rm sum}$,\footnote{$t_{\rm mba}$ is the run time of MBA$_{\ell_1/\ell_2}$, while $t_{\rm sum}$ includes the run time of MBA$_{\ell_1/\ell_2}$, the time for performing (reduced) QR factorization on $A^T$ and the time for computing $Q(R^{-1})^T b$.} the recovery error ${\rm RecErr} = \frac{\|x_{\rm out}-x_{\rm orig}\|}{\max\{1,\|x_{\rm orig}\|\}} $, and the $ {\rm Residual} = {\rm dist}^2(Ax_{\rm out}-b, S)-\sigma^2 $, averaged over the 20 random instances, where $x_{\rm out}$ is the approximate solution returned by MBA$_{\ell_1/\ell_2}$. We see that $x_{\rm orig}$ are approximately recovered in a reasonable period of time.
\begin{table}[h]
	\caption{Random tests on robust compressed sensing}\label{table:rcs}
	\begin{center}
		{\footnotesize
			\begin{tabular}{cccccc}\hline
			   $ i $  &     $t_{\rm qr}$ &  $t_{\rm mba} (t_{\rm sum})$ &  $ {\rm RecErr} $ & Residual \\ \hline
			     2 &    0.5 &    1.2 (   1.7)  & 3.3e-02 & -3e-11  \\
			     4 &    3.1 &    4.1 (   7.2)  & 3.3e-02 & -5e-11  \\
			     6 &    9.8 &    8.3 (  18.1)  & 3.3e-02 & -9e-11  \\
			     8 &   24.0 &   14.3 (  38.4)  & 3.3e-02 & -1e-10  \\
			    10 &   43.6 &   21.5 (  65.3)  & 3.3e-02 & -2e-10  \\ \hline			
			\end{tabular}
		}
	\end{center}
\end{table}

\subsection{CS problems with Cauchy noise \eqref{prob:Lorentzian}}
Similar to the previous subsection, we generate the sensing matrix $ A \in\R^{m\times n} $  with i.i.d standard Gaussian entries and then normalize each column of $ A $. We then generate the original signal $x_{\rm orig}\in\mathbb{R}^{n}$ as a $k$-sparse vector with $k$ i.i.d standard Gaussian entries at random (uniformly chosen) positions. However, we generate $b$ as $ b = Ax_{\rm orig} + 0.01 \varepsilon $ with $ \varepsilon_i\sim {\rm Cauchy}(0,1) $, i.e., $\varepsilon_i = \tan(\pi(\widetilde{\varepsilon}_i - 1/2))$ for some random vector $\widetilde \varepsilon\in\R^m$  with i.i.d. entries uniformly chosen in $[0,1]$. Finally, we set $ \gamma = 0.02 $ and $ \sigma = 1.2\|0.01\varepsilon\|_{LL_2,\gamma}$.

We compare the $ \ell_1 $ minimization model (which minimizes $\ell_1$ norm in place of $\ell_1/\ell_2$ in \eqref{prob:Lorentzian}; see \cite[Eq.~(5.8)]{YuPongLv20} with $\mu = 0$) with our $ \ell_1/\ell_2 $ model. We use ${\rm SCP}_{\rm ls}$ in \cite{YuPongLv20} for solving the $ \ell_1 $ minimization model. We use the same parameter settings for ${\rm SCP}_{\rm ls}$ as in \cite[Section~5]{YuPongLv20}, except that we terminate ${\rm SCP}_{\rm ls}$ when \eqref{termination} is satisfied with $ tol=10^{-6} $ \add{in Table~\ref{table:Lorentzian1}}. We initialize MBA$_{\ell_1/\ell_2}$ at the approximate solution $ x_{\rm scp} $ given by ${\rm SCP}_{\rm ls}$, and terminate MBA$_{\ell_1/\ell_2}$ when \eqref{termination} is satisfied with $ tol = 10^{-6} $.

In our numerical experiments, we consider $ (n,m, k) = (2560i, 720i, 80i) $ with $ i\in\{2, 4, 6, 8, 10\} $. For each $ i $, we generate 20 random instances as described above. Our computational results are presented in Table~\ref{table:Lorentzian1}, which are averaged over the 20 random instances. Here we show the CPU time $ t_{\rm qr} $ for performing (reduced) QR decomposition on $A^T$, the CPU time,\footnote{For MBA$_{\ell_1/\ell_2}$, the time in parenthesis is the total run time including the time for computing the initial point $A^\dagger b$ for SCP$_{\rm ls}$ and the run times of SCP$_{\rm ls}$ and MBA$_{\ell_1/\ell_2}$, the time without parenthesis is the actual run time of MBA$_{\ell_1/\ell_2}$ starting from $x_{\rm feas} = x_{\rm scp}$.} the recovery error $ {\rm RecErr} = \frac{\|x_{\rm out}-x_{\rm orig}\|}{\max\{1,\|x_{\rm orig}\|\}} $ and the residual $ {\rm Residual} =\|Ax_{\rm out}-b\|_{LL_2,\gamma} - \sigma $ of both ${\rm SCP}_{\rm ls}$ and MBA$_{\ell_1/\ell_2}$, where $x_{\rm out}$ is the approximate solution returned by the respective algorithm. We see that the recovery error is significantly improved by solving the nonconvex model.

\add{Finally, as suggested by one reviewer, we investigate the effect of initialization on the performance of  MBA$_{\ell_1/\ell_2}$. Specifically, we test ${\rm SCP}_{\rm ls}$ and MBA$_{\ell_1/\ell_2}$ on the same set of instances used in Table~\ref{table:Lorentzian1}, but terminate ${\rm SCP}_{\rm ls}$ when \eqref{termination} is satisfied with $ tol = 10^{-3} $. We then initialize MBA$_{\ell_1/\ell_2}$ at the approximate solution returned by ${\rm SCP}_{\rm ls}$, and terminate MBA$_{\ell_1/\ell_2}$ when \eqref{termination} is satisfied with $ tol=10^{-6} $. The computational results are presented in Table~\ref{table:Lorentzian2}. Not too surprisingly, we can see that MBA$_{\ell_1/\ell_2}$ can result in large recovery errors with this initialization, though the recovery errors may still be small sometimes (see $i = 6$). Thus, the performance of MBA$_{\ell_1/\ell_2}$ is quite sensitive to its initialization.}

\begin{table}[h]
	\caption{Random tests on CS problems with Cauchy noise (\add{$ tol = 10^{-6} $ for $ {\rm SCP}_{\rm ls} $)}}\label{table:Lorentzian1}
	\begin{center}
		{\footnotesize
			\begin{tabular}{ccccccccc}\hline
			\multirow{2}{*}{ $ i $ } & \multirow{2}{*}{$ t_{\rm qr} $} & \multicolumn{2}{c}{CPU} & \multicolumn{2}{c}{RecErr}  & \multicolumn{2}{c}{Residual} \\ \cmidrule(lr){3-4} \cmidrule(lr){5-6} \cmidrule(lr){7-8}
		    &   &   $ {\rm SCP}_{\rm ls} $ &  MBA$_{\ell_1/\ell_2}$  &     $ {\rm SCP}_{\rm ls} $ &   MBA$_{\ell_1/\ell_2}$  &   $ {\rm SCP}_{\rm ls} $ &   MBA$_{\ell_1/\ell_2}$   \\ \hline
                 2 &    0.5   &   10.0 &     0.6 (  11.1) & 1.3e-01 & 6.5e-02 & -2e-07& -8e-08  \\
                 4 &    3.0   &   52.4 &     2.0 (  57.5) & 1.3e-01 & 6.6e-02 & -6e-07& -2e-07  \\
                 6 &    9.4   &   87.3 &     4.1 ( 100.9) & 1.3e-01 & 6.6e-02 & -9e-07& -2e-07  \\
                 8 &   23.4   &  281.6 &     7.0 ( 312.1) & 1.3e-01 & 6.5e-02 & -1e-06& -3e-07  \\
                10 &   42.4   &  285.5 &    11.4 ( 339.5) & 1.3e-01 & 6.5e-02 & -2e-06& -4e-07  \\ \hline
			\end{tabular}
		}
	\end{center}
\end{table}

\begin{table}[h]
	\caption{\textcolor{blue}{Random tests on CS problems with Cauchy noise ($ tol = 10^{-3} $ for $ {\rm SCP}_{\rm ls} $)}\label{table:Lorentzian2}}
	\begin{center}
		{\footnotesize \color{blue}
			\begin{tabular}{ccccccccc}\hline
				\multirow{2}{*}{ $ i $ } & \multirow{2}{*}{$ t_{\rm qr} $} & \multicolumn{2}{c}{CPU} & \multicolumn{2}{c}{RecErr}  & \multicolumn{2}{c}{Residual} \\ \cmidrule(lr){3-4} \cmidrule(lr){5-6} \cmidrule(lr){7-8}
				&   &   $ {\rm SCP}_{\rm ls} $ &  MBA$_{\ell_1/\ell_2}$  &     $ {\rm SCP}_{\rm ls} $ &   MBA$_{\ell_1/\ell_2}$  &   $ {\rm SCP}_{\rm ls} $ &   MBA$_{\ell_1/\ell_2}$   \\ \hline
			2  &   0.5  &    3.0 &    50.8 (  54.3) & 1.8e+00 & 1.6e+00 & -3e+01  & -6e-05 \\
			4  &   3.0  &   11.8 &   457.6 ( 472.5) & 4.3e+00 & 4.2e+00 & -1e+02  & -5e-04 \\
			6  &   9.5  &   30.5 &     4.9 (  44.9) & 2.1e-01 & 6.6e-02 & -9e-01  & -2e-07 \\
			8  &  22.9  &   37.7 &    78.5 ( 139.2) & 9.7e+00 & 9.6e+00 & -6e+01  & -9e-03 \\
			10  &  41.5  &   71.9 &  3164.0 (3277.6) & 2.1e+00 & 1.7e+00 & -1e+02  & -2e-04\\ \hline
			\end{tabular}
		}
	\end{center}
\end{table}

\subsection{Badly scaled CS problems with Gaussian noise \eqref{prob:LS}}
In this section, we generate test instances similar to those in \cite{WangYanLou19}. Specifically, we first generate $ A = [a_1,\cdots, a_n]\in\R^{m\times n} $ with
\[
a_j = \frac{1}{\sqrt{m}}\cos\left(\frac{2\pi wj}{F}\right), \quad j=1, \cdots, n,
\]
where $ w\in\R^m $ have i.i.d. entries uniformly chosen in $[0,1]$. Next, we generate the original signal $x_{\rm orig}\in\mathbb{R}^{n}$ using the following MATLAB command:
\vskip 1mm
\begin{verbatim}
           I = randperm(n); J = I(1:k); xorig = zeros(n,1);
	           xorig(J) = sign(randn(k,1)).*10.^(D*rand(k,1));
\end{verbatim}
\vskip 1mm
We then set $ b = Ax_{\rm orig} + 0.01 \varepsilon $, where $\varepsilon\in\R^m $  has i.i.d standard Gaussian entries. Finally, we set $ \sigma=1.2\|0.01\varepsilon\| $.

We compare the $ \ell_1 $ minimization model (which minimizes $\ell_1$ norm in place of $\ell_1/\ell_2$ in \eqref{prob:LS}; see \cite[Eq.~(5.5)]{YuPongLv20} with $\mu = 0$) with our $ \ell_1/\ell_2 $ model. The $ \ell_1 $ minimization model is solved via SPGL1 \cite{VaFr08} (version 2.1) using default settings. The initial point for MBA$_{\ell_1/\ell_2}$ is generated from the approximate solution $x_{\rm spgl1}$ of SPGL1 as follows: Specifically, since $x_{\rm spgl1}$ may violate the constraint slightly, we set the initial point of MBA$_{\ell_1/\ell_2}$ as
\[
x_{\rm feas} =
\begin{cases}
  A^\dagger b + \sigma \frac{x_{\rm spgl1} - A^\dagger b}{\|A x_{\rm spgl1} - b\|} & {\rm if}\ \|A x_{\rm spgl1} - b\| > \sigma,\\
  x_{\rm spgl1}                                                                    & {\rm otherwise}.
\end{cases}
\]
We terminate MBA$_{\ell_1/\ell_2}$ when \eqref{termination} is satisfied with $ tol = 10^{-8} $.

In our numerical tests, we set $ n = 1024 $, $ m = 64 $ and consider  $ k\in\{8, 12\} $, $ F\in\{5, 15\} $ and $ D\in\{2, 3\} $. For each $ (k, F, D) $, we generate 20 random instances as described above. We present the computational results (averaged over the 20 random instances) in Table~\ref{table:BP}. Here we show the CPU time,\footnote{For MBA$_{\ell_1/\ell_2}$, the time in parenthesis is the total run time including the time for computing the feasible point $A^\dagger b$ and the run times of SPGL1 and MBA$_{\ell_1/\ell_2}$, the time without parenthesis is the actual run time of MBA$_{\ell_1/\ell_2}$ starting from $x_{\rm feas}$.} the recovery error $ {\rm RecErr} = \frac{\|x_{\rm out}-x_{\rm orig}\|}{\max\{1,\|x_{\rm orig}\|\}} $, the $ {\rm Residual} =\|Ax_{\rm out}-b\|^2-\sigma^2 $ of both SPGL1 and MBA$_{\ell_1/\ell_2}$, where $x_{\rm out}$ is the approximate solution returned by the respective algorithm. We again observe that the recovery error is significantly improved (on average) by solving the nonconvex model in most instances, except when $(k,F,D)=(12,15,3)$. In this case, we see that the $x_{\rm spgl1}$ can be highly infeasible and thus the starting point $x_{\rm feas}$ provided to MBA$_{\ell_1/\ell_2}$ may not be a good starting point. This might explain the relatively poor performance of MBA$_{\ell_1/\ell_2}$ in this case.

\begin{table}
	\caption{Random tests on badly scaled CS problems with Gaussian noise}\label{table:BP}
	\begin{center}
		{\scriptsize
			\begin{tabular}{ccc ccc ccc ccc}\hline
			 \multirow{2}{*}{$ k $} & \multirow{2}{*}{$ F $ } & \multirow{2}{*}{ $ D $} & \multicolumn{2}{c}{CPU} & \multicolumn{2}{c}{RecErr} & \multicolumn{2}{c}{Residual} \\ \cmidrule(lr){4-5}  \cmidrule(lr){6-7}   \cmidrule(lr){8-9}
			  &  &   &    SPGL1 &  MBA$_{\ell_1/\ell_2}$  &    SPGL1 &  MBA$_{\ell_1/\ell_2}$  &  SPGL1 &  MBA$_{\ell_1/\ell_2}$ \\ \hline
              8 &   5 &   2 &   0.07 &   0.13 (  0.20)  & 3.2e-02  & 2.3e-03 & -4e-05 & -1e-13    \\
              8 &   5 &   3 &   0.06 &   0.14 (  0.20)  & 3.2e-03  & 6.8e-04 & -4e-05 & -2e-11    \\
              8 &  15 &   2 &   0.08 &   3.92 (  4.01)  & 4.7e-01  & 1.5e-01 & -9e-05 & -7e-13    \\
              8 &  15 &   3 &   0.11 &  31.46 ( 31.58)  & 3.8e-01  & 5.3e-02 &  2e-02 & -5e-11    \\ \hline
             12 &   5 &   2 &   0.06 &   2.26 (  2.32)  & 1.4e-01  & 3.6e-02 & -3e-04 & -8e-13    \\
             12 &   5 &   3 &   0.08 &   4.05 (  4.14)  & 6.0e-02  & 3.8e-03 &  1e-04 & -7e-11    \\
             12 &  15 &   2 &   0.09 &   8.32 (  8.41)  & 5.2e-01  & 2.0e-01 & -1e-04 & -1e-12    \\
             12 &  15 &   3 &   0.11 & 403.80 (403.91)  & 5.2e-01  & 1.5e+00 &  6e-02 & -3e-10    \\ \hline
			\end{tabular}
		}
	\end{center}
\end{table}



\begin{thebibliography}{11}
	
	\bibitem{Attouch09}
	H. Attouch and J. Bolte.
	\newblock On the convergence of the proximal algorithm for nonsmooth functions involving analytic features.
	\newblock {\em Math. Program.} 116:5--16, 2009.
	
	\bibitem{Attouch10}
	H. Attouch, J. Bolte, P. Redont and A. Soubeyran.
	\newblock Proximal alternating minimization and projection methods for nonconvex problems: an approach based on the Kurdyka-{\L}ojasiewicz inequality.
	\newblock {\em Math. Oper. Res.} 35:438--457, 2010.
	
	\bibitem{Attouch13}
	H. Attouch, J. Bolte and B. F. Svaiter.
	\newblock Convergence of descent methods for semi-algebraic and tame problems: proximal algorithms, forward-backward splitting, and regularized Gauss-Seidel methods.
	\newblock {\em Math. Program.} 137:91--129, 2013.
	
	\bibitem{AusSheTeb10}
	A. Auslender, R. Shefi and M. Teboulle.
	\newblock A moving balls approximation method for a class of smooth constrained minimization problems.
	\newblock {\em SIAM J. Optim.} 20:3232--3259, 2010.
	
	
	
	\bibitem{VaFr08}
	E. van den Berg and M. P. Friedlander.
	\newblock Probing the Pareto frontier for basis pursuit solutions.
	\newblock \emph{SIAM J. Sci. Comput.} 31:890--912, 2008.
	
	\bibitem{BolChenPau18}
	J. Bolte, Z. Chen and E. Pauwels.
	\newblock The multiproximal linearization method for convex composite problems.
	\newblock To appear in {\em Math. Program.}, DOI:10.1007/s10107-019-01382-3.
	
	\bibitem{BolDan07}
	J. Bolte, A. Daniilidis and A. Lewis.
	\newblock  The {\L}ojasiewicz inequality for nonsmooth subanalytic functions with applications to subgradient dynamic systems.
	\newblock \emph{SIAM. J. Optim} 17:1205--1223, 2007.
	
	\bibitem{BolPau16}
	J. Bolte and E. Pauwels.
	\newblock Majorization-minimization procedures and convergence of SQP methods for semi-algebraic and tame programs.
	\newblock {\em Math. Oper. Res.} 41:442--465, 2016.

    \bibitem{BolSabTeb14}
    J. Bolte, S. Sabach and M. Teboulle.
    \newblock Proximal alternating linearized minimization for nonconvex
and nonsmooth problems.
\newblock {\em Math. Program.} 146:459--494, 2014.

    \bibitem{BorLew06}
    J. M. Borwein and A. S. Lewis.
    \newblock {\em Convex Analysis and Nonlinear Optimization}.
    \newblock 2nd edition, Springer, 2006.

    \bibitem{BorZhu04}
    J. M. Borwein and Q. J. Zhu.
    \newblock {\em Techniques of Variational Analysis}.
    \newblock Springer, 2004.


    \bibitem{BotCse17}
    \add{R. I. Bo\c{t} and E. R. Csetnek.
    \newblock Proximal-gradient algorithms for fractional programming.
    \newblock \emph{Optimization.} 66:1383-1396, 2017.
	}


	\bibitem{BotLi20}
	\add{R. I. Bo{\c{t}}, M. N. Dao and G. Li.
	\newblock Extrapolated proximal subgradient algorithms for nonconvex and nonsmooth fractional programs.
	\newblock Available at https://arxiv.org/abs/2003.04124.
	}
%
%
	
	
	\bibitem{CaTao06}
	E. J. Cand\'{e}s, J. K. Romberg and T. Tao.
	\newblock Stable signal recovery from incomplete and inaccurate measurements.
	\newblock \emph{Commun. Pure Appl. Math.} 59:1207--1223, 2006.
	
	\bibitem{CaTao05}
	E. J. Cand\'{e}s and T. Tao.
	\newblock Decoding by linear programming.
	\newblock \emph{IEEE Trans. Inf. Theory.} 51:4203--4251, 2005.
	
	
	\bibitem{CaBa10}
	R. E. Carrillo, K. E. Barner and T. C. Aysal.
	\newblock Robust sampling and reconstruction methods for sparse signals in the presence of impulsive noise.
	\newblock \emph{IEEE J. Sel. Topics Signal Process.} 4(2):392--408, 2010.
	
	
	\bibitem{CaRaArBaSa16}
	R. E. Carrillo, A. B. Ramirez, G. R. Arce, K. E. Barner and B. M. Sadler.
	\newblock Robust compressive sensing of sparse signals: a review.
	\newblock \emph{EURASIP J. Adv. Signal Process.} 108, 2016.
	
	
	
	\bibitem{Chartrand07}
	R. Chartrand.
	\newblock Exact reconstruction of sparse signals via nonconvex minimization.
	\newblock \emph{IEEE Signal Process Lett.} 14:707--710, 2007.
	
	
	\bibitem{ChenDonoho01}
	S. S. Chen, D. L. Donoho, and M. A. Saunders.
	\newblock Atomic decomposition by basis pursuit.
	\newblock \emph{SIAM. REV.} 43:129--159, 2001.
	
	\bibitem{ChenPong16}
	X. Chen, Z. Lu and T. K. Pong.
	\newblock Penalty Methods for a Class of Non-Lipschitz Optimization Problems.
	\newblock \emph{SIAM J. Optim.} 26:1465--1492, 2016.
	
	\bibitem{Clarke90}
	F. H. Clarke.
	\newblock {\em Optimization and Nonsmooth Analysis},
	\newblock SIAM, 1990.
	
	\bibitem{CrFeNg08}
	J.-P. Crouzeix, J. A. Ferland and H. V. Nguyen.
	\newblock Revisiting Dinkelbach-type algorithms for generalized fractional programs.
	\newblock {\em Opsearch.} 45:97--110, 2008.
	
	\bibitem{Din67}
	W. Dinkelbach.
	\newblock On nonlinear fractional programming.
	\newblock {\em Manage. Sci.} 13:492--498, 1967.
	
	\bibitem{FaPang03}
	F. Facchinei and J.-S. Pang.
	\newblock {\em Finite-dimensional Variational Inequalities and Complementarity Problems} Part I.
	\newblock Springer, New York, 2003.
	
%



	
	
%
%
	
	
	\bibitem{LiPong16}
	G. Li and T. K. Pong.
	\newblock Calculus of the exponent of Kurdyka-{\L}ojasiewicz inequality and its applications to linear convergence of first-order methods.
	\newblock {\em Found. Comput. Math.} 18:1199--1232, 2018.

	\bibitem{LiuPong19}
	T. Liu, T. K. Pong and A. Takeda.
	\newblock A refined convergence analysis of $ {\rm pDCA}_{\rm e} $ with applications to simultaneous sparse recovery and outlier detection.
	\newblock \emph{Comput. Optim. Appl.} 73:69--100, 2019.
	
	\bibitem{LouYin15}
	Y. Lou, P. Yin, Q. He and J. Xin.
	\newblock Computing sparse representation in a highly coherent dictionary based on difference of $ L_1 $ and $ L_2 $.
	\newblock \emph{J. Sci. Comput.} 64:178--196, 2015.
	

	\bibitem{Morduk06}
	B. S. Mordukhovich.
	\newblock \emph{Variational Analysis and Generalized Differentiation I.}
	\newblock Springer, 2006.
	
	\bibitem{Natarajan95}
	B. K. Natarajan.
	\newblock Sparse approximate solutions to linear systems.
	\newblock {\em SIAM J. Comput.} 24:227--234, 1995.
	
	\bibitem{NghThe09}
	H. van Ngai and M. Th\'{e}ra.
	\newblock Error bounds for systems of lower semicontinuous functions in Asplund spaces.
	\newblock {\em Math. Program.} 116:397--427, 2009.
	
	
	\bibitem{PoCa12}
	L. F. Polania, R. E. Carrillo, M. Blanco-Velasco and K. E. Barner.
	\newblock Compressive sensing for ECG signals in
	the presence of electromyography noise.
	\newblock {In: Proceeding of the 38th Annual Northeast Bioengineering Conference.}  295-296, 2012.
	
	\bibitem{RaWaDoLo18}
	Y. Rahimi, C. Wang, H. Dong and Y. Lou.
	\newblock A scale invariant approach for sparse signal recovery.
	\newblock \emph{SIAM J. Sci. Comput.} 41:A3649--A3672, 2019.
	
	\bibitem{Rock70}
	R. T. Rockafellar.
	\newblock {\em Convex Analysis}.
	\newblock Princeton University Press, Princeton, 1970.
	
	\bibitem{RockWets98}
	R. T. Rockafellar and R. J.-B. Wets.
	\newblock {\em Variational Analysis}.
	\newblock Springer, Berlin, 1998.

	
	\bibitem{SheTeb16}
	R. Shefi and M. Teboulle.
	\newblock A dual method for minimizing a nonsmooth objective over one smooth inequality constraint.
	\newblock {\em Math. Program.} 159:137--164, 2016.
	
	
	\bibitem{Tuy98}
	H. Tuy.
	\newblock {\em Convex Analysis and Global Optimization}.
	\newblock Springer, 1998.
	
	\bibitem{Vavasis09}
	S. A. Vavasis.
	\newblock Derivation of compressive sensing theorems from the spherical section property.
	\newblock {\em University of Waterloo.} 2009.
	
	
	\bibitem{WangTaoLou20}
	C. Wang, M. Tao, J. Nagy and Y. Lou.
	\newblock Limited-angle CT reconstruction via the  $L_1$/$L_2$  minimization.
	\newblock Available at https://arxiv.org/abs/2006.00601.

    \bibitem{WangYanLou19}
	C. Wang, M. Yan and Y. Lou.
	\newblock Accelerated schemes for the $L_1$/$L_2$ minimization.
	\newblock \emph{IEEE T. Signal Proces.} 68:2660--2669, 2020.

    \bibitem{WenChenPong18}
    B. Wen, X. Chen and T. K. Pong.
    \newblock A proximal difference-of-convex algorithm with extrapolation.
    \newblock {\em Comput. Optim. Appl.} 69:297--324, 2018.


	\bibitem{YinEssXin14}
	P. Yin, E. Esser and J. Xin.
	\newblock Ratio and difference of $ L_1 $ and $ L_2 $ norms and sparse representation with coherent dictionaries.
	\newblock {\em Commun. Inform. Systems.} 14:87--109, 2014.

    \bibitem{YuLiPong19}
    P. Yu, G. Li and T. K. Pong.
    \newblock \add{Kurdyka-{\L}ojasiewicz exponent via inf-projection.}
    \newblock Available at https://arxiv.org/abs/1902.03635.

	
	\bibitem{YuPongLv20}
	P. Yu, T. K. Pong and Z. Lu.
	\newblock {Convergence rate analysis of a sequential convex programming method with line search for a class of constrained difference-of-convex optimization problems}.
	\newblock Available at https://arXiv.org/abs/2001.06998.
	
	
	
	%
	%

	
	\bibitem{Zhang13}
	Y. Zhang.
	\newblock Theory of compressive sensing via L1-minimization: a non-RIP analysis and extensions.
	\newblock {\em J. Oper. Res. Soc. China.} 1:79--105, 2013.
	%
	%
	%
	
\end{thebibliography}

\end{document}